\documentclass[a4paper,10pt]{article}
\usepackage{mypackages}
\usepackage{mymacros}
\usepackage{mystyle}

\title{Coarse compactifications of proper metric spaces}
\author{Elisa Hartmann\thanks{Department of Mathematics, Karlsruhe Institute of 
Technology}}

\begin{document}

\maketitle

\begin{abstract}

This paper studies coarse compactifications and their boundary.

 We introduce two alternative descriptions to Roe's original definition of coarse compactification. One approach uses bounded functions on $X$ that can be extended to the boundary. They satisfy the Higson property exactly when the compactification is coarse. The other approach defines a relation on subsets of $X$ which tells when two subsets closure meet on the boundary. A set of axioms characterizes when this relation defines a coarse compactification. Such a relation is called large-scale proximity.

Based on this foundational work we study examples for coarse compactifications Higson compactification, Freudenthal compactification and Gromov compactification. For each example we characterize the bounded functions which can be extended to the coarse compactification and the corresponding large-scale proximity relation.

We provide an alternative proof for the property that the Higson compactification is universal among coarse compactifications. Furthermore the Freudenthal compactification is universal among coarse compactifications with totally disconnected boundary. If $X$ is hyperbolic geodesic proper then there is a closed embedding $\nu(\R_+)\times \partial X\to \nu(X)$. Its image is a retract of $\nu(X)$ if $X$ is a tree.
\end{abstract}

\tableofcontents

\section{Introduction}

This paper studies a class of compactifications of proper metric spaces which contains the Higson compactification, Gromov compactification and a coarse version of the Freudenthal compactification. With every such space $X$ one can associate a coarse structure\footnote{See Definition~\ref{defn:coarsestructure} which defines a coarse structure given a metric space}. In \cite{Roe2003} Roe introduced a class of compactifications which are compatible with this structure. We provide an equivalent definition:

\begin{defn}
Let $X$ be a proper metric space and $\bar X$ a compactification of $X$. Then $\bar X$ is a \emph{coarse} compactification if for every two nets $(x_i)_i,(y_i)_i\s X$, such that $(x_i,y_i)_i$ is an entourage in $X$, both nets have the same limit points on the boundary. 
\end{defn}

Instances of coarse compactifications have been studied by many authors, see e.g. \cite{Mine2015,Keesling1994,Protasov2019,Foertsch2003,Protasov2011,Protasov2015,Protasov2005,Benakli2002,Kalantari2016,Kalantari2015,Cornulier2019}. In this paper we give two new descriptions of coarse compactifications which are equivalent to the original one. One of the two equivalent definition starts with a relation on subsets of $X$, the other definition uses bounded functions on $X$ which can be extended to the compactification.

Every coarse compactification $\bar X$ of a proper metric space $X$ gives rise to a relation $r_{\bar X}$ on subsets of $X$ which tells when the closure of two sets meet on the boundary. Specifically denote by $\partial X$ the boundary $\bar X\setminus X$. If $A,B\s X$ are subsets then 
\[
A r_{\bar X} B \Leftrightarrow (\bar A\cap \partial X)\cap (\bar B\cap\partial X)\not=\emptyset.
\]
A set of axioms tells if a relation on subsets of $X$ comes from a coarse compactification. Such a relation is then called large-scale proximity in the sense of the following definition. 
\begin{defn}
If $X$ is a proper metric space a relation $r$ on subsets of $X$ is called \emph{large-scale proximity} if
\begin{enumerate}
\item if $B\s X$ then $B\bar r B$ if and only if $B$ is bounded;
\item  $A r B$ implies $B r A$ for every $A,B\s X$;
\item if $A,A'\s X$ are subsets and $E\s X^2$ is an entourage with $E[A]\z 
A',E[A']\z A$ then $A r B$ implies $A' r B$ for every $B\s X$; 
\item if $A,B,C\s X$ then $(A\cup B)r C$ if and only if ($A r C$ or $B r C$);
\item if $A,B\s X$ with $A\bar r B$ then there exist $C,D\s X$  with $C\cup D=X$ 
and $C\bar r A,D\bar r B$.
\end{enumerate}
\end{defn}

Conversely given a large-scale proximity relation $r$ we can construct a coarse compactification $\bar X^r$ which induces this relation on subsets of $X$. This is done in Definitions~\ref{defn:relationtocompactification1},\ref{defn:relationtocompactification2}.

An entirely different approach characterizes coarse compactifications via the $C^*$-algebra $C_r(X)$ of bounded continuous functions on $X$ that can be extended to the boundary of the compactification. Every set of bounded continuous functions $\mathcal A$ on $X$ generates the smallest compactification $\bar X^{\mathcal A}$ such that functions in $\mathcal A$ can be extended to the boundary. More specifically we introduce a property on bounded continuous functions: 

\begin{defn}
A bounded continuous function $\varphi:X\to \R$ is called \emph{Higson} if for every entourage $E\s X^2$ the map 
\f{
d\varphi|_E:E &\to \R\\
(x,y)&\mapsto \varphi(x)-\varphi(y)
}
vanishes at infinity.
\end{defn}
Given a compactification $\bar X$ denote by $C_{\bar X}(X)$ the algebra of bounded continuous functions on $X$ that can be extended to $\bar X$. They must be Higson if $\bar X$ is coarse. Conversely if the algebra of bounded functions on $X$ that can be extended to the boundary of the compactification consists of Higson functions then $\bar X$ is coarse. 

We summarize those results in the following theorem:

\begin{thma}
\label{thm:a}
 If $X$ is a proper metric space and $\bar X$ a compactification of $X$ the following statements are equivalent:
 \begin{itemize}
  \item The compactification $\bar X$ is coarse.
  \item The relation $r_{\bar X}$ on subsets of $X$ is a large-scale proximity relation. 
  \item Every function in $C_{\bar X}(X)$ is Higson.
 \end{itemize}
 Moreover given a large-scale proximity relation $r$ the compactification $\bar X^r$ is coarse. If all functions in an algebra of bounded functions $\mathcal A$ on $X$ are Higson they generate a coarse compactification $\bar X^{\mathcal A}$. 
\end{thma}

In Theorem~\ref{thm:roevsclose} we translate Roe's original definition of coarse compactification to a definition which is more suitable for us. The equivalence of statements 1,2 in Theorem~\ref{thm:a} is shown in Theorem~\ref{thm:coarsetorelation}. The equivalence of statements 1,3 in Theorem~\ref{thm:a} is shown in Theorem~\ref{thm:coarsefunctions}.

We investigate three specific examples: The Higson compactification, the Freudenthal compactification and the Gromov compactification.

\begin{ex}The Higson compactification $hX=\nu(X)\cup X$ of a proper metric space $X$ is characterized by the following large-scale proximity relation: Two subsets $A,B\s X$ are called \emph{close}, written $A\close B$, if there exists an unbounded sequence $(a_i,b_i)_i\s A\times B$ and some $R\ge 0$ such that $d(a_i,b_i)\le R$ for every $i$. 

 Every Higson function on $X$ can be extended to the Higson corona $\nu(X)$.
\end{ex}

\begin{ex}
The Freudenthal compactification $\varepsilon X=\Omega X\cup X$ of a proper metric space $X$ is characterized by the following large-scale proximity relation $\close_f$: Two subsets $A,B\s X$  don't have a same end, written $A\notclose_f B$, if there exist $A'\z A,B'\z B$ with $A'\cup B'=X$ and $A'\notclose B'$. 

Let $x_0\in X$ be a basepoint. A bounded continuous map $\varphi:X\to \R$ is called \emph{Freudenthal} if for every $R\ge 0$ there exists $K\ge 0$ such that $d(x,y)\le R,d(x_0,x)\ge K, d(x_0,y)\ge K$ implies $\varphi(x)=\varphi(y)$. We write $C_f(X)$ for the ring of Freudenthal functions on $X$. Every bounded function that can be extended to the Freudenthal compactification is Freudenthal and every Freudenthal function can be extended to the boundary of the Freudenthal compactification.
\end{ex}

\begin{ex}
If a metric space $X$ is hyperbolic, proper the Gromov compactification $\bar X=\partial X\cup X$ is defined. The associated large-scale proximity relation $\close_g$ is defined by $A\close_g B$ if there are sequences $(a_i)_i\s A,(b_i)_i\s B$ such that 
\[
      \liminf_{i,j\to \infty}(a_i|b_j)=\infty.
\]

 If $X$ is hyperbolic a continuous function $\varphi:X\to \R$ is called \emph{Gromov} if for every $\varepsilon>0$ there exists $K>0$ such that
\[
      (x|y)>K \,\to\, |\varphi(x)-\varphi(y)|<\varepsilon.
\]
Every Gromov function can be extended to the Gromov boundary and every function that can be extended to the Gromov boundary is Gromov.
\end{ex}

\begin{rem}
We establish functoriality in the following way. In Proposition~\ref{prop:contravariant} we show the association of a coarse compactification is in a way contravariant on coarse maps. We can always pull back a coarse compactification along a coarse map. The reverse direction push out is not always possible. We can glue coarse compactifications along a coarse cover though which is done in Proposition~\ref{prop:ccsheaf}. Then Lemma~\ref{lem:ccsheaf} shows the poset of coarse compactifications is a sheaf on the Grothendieck topology of coarse covers on $X$.
\end{rem}

We now describe the results on the specific examples in detail. In particular the boundary of the Higson compactification retains information about the coarse structure since the Higson corona is a faithful functor \cite{Hartmann2019c}. This way not much information is lost if we restrict our attention to the boundary of a coarse compactification when studying coarse metric spaces. The Higson corona $\nu(X)$ of $X$ is connected if $X$ is one-ended. Aside from that and from being compact and Hausdorff the Higson corona does not have many nice property. The topology of the Higson corona does not have a countable base and is in fact is never metrizable \cite{Roe2003}. We provide a new proof that the Higson corona is universal among coarse compactifications. The original result can be found in \cite{Roe2003}.

\begin{thma}\name{Roe}
 If $X$ is a proper metric space the Higson compactification of $X$ is universal among coarse compactifications of $X$. This means a compactification of $X$ is a coarse compactification if and only if it is a quotient of the Higson compactification, where the quotient map restricts to the identity on $X$.
\end{thma}

This in particular implies that the boundary of every coarse compactification of $X$ is connected if $X$ is one-ended.

The space of ends of a topological space is well known and dates back to Freudenthal's works \cite{Freudenthal1931,Hopf1944, Freudenthal1945}. We construct a version of Freudenthal compactification on coarse proper metric spaces given both descriptions via a large-scale proximity relation and via bounded functions. The Proposition~\ref{prop:freudenthalequivcoarsetop} shows both the topological and the coarse version of Freudenthal compactifiaction agree on proper geodesic metric spaces. The space of ends gives information about the number of ends of a coarse metric space \cite{Hartmann2017a}. It is both metrizable and totally disconnected.

For the space of ends we can obtain a similar result as in the case of the Higson corona regarding universality.

\begin{thma}
If $X$ is a proper metric space the boundary of the Freudenthal compactification $\Omega X=\varepsilon X\ohne X$ of $X$ is totally disconnected. If $(\bar X,X)$ is another coarse compactification whose boundary is totally disconnected then it factors through $\varepsilon X$. This means there is a surjective map $\varepsilon X\to \bar X$ which is continuous on the boundary and the identity on $X$.
\end{thma}

The topology of the Gromov compactification is metrizable \cite{Benakli2002}. The usual description of its boundary is via geodesic rays or sequences that converge to infinity. We investigate in which way the Higson corona can be recovered from the Gromov boundary.

\begin{thma}
 Let $X$ be a hyperbolic geodesic proper metric space. Then there is a closed embedding $\Phi: \nu (\Z_+)\times \partial X\to \nu (X)$. The image of $\Phi$ is a retract if $X$ is a tree.
\end{thma}

All three instances of coarse compactification are functorial in that the boundary is a coarse invariant. The Higson corona and space of ends are even functors on coarse maps. This way we associate compact Hausdorff spaces to coarse proper metric spaces which serve in the classification of proper metric spaces according to their coarse geometry. This gives access to topological methods that can be used in the coarse setting.

\section{Notions in coarse geometry}
\label{sec:metric}

We consider metric spaces as coarse objects. The book \cite{Roe2003} introduces a more 
general notion of coarse spaces, which describes coarse structure in an abstract way. Since we only consider proper metric spaces as examples we do not need to do this here.

\begin{defn}
 A metric space $X$ is proper if the closure $\bar B$ in $X$ of every bounded subset $B\s X$ is compact.
\end{defn}

\begin{defn}
\label{defn:coarsestructure}
 Let $(X,d)$ be a metric space. Then the \emph{coarse structure associated to $d$} on $X$ consists of those subsets $E\s X\times X$ for which
 \[
  \sup_{(x,y)\in E}d(x,y)<\infty.
 \]
 We call an element of the coarse structure \emph{entourage}.  In what follows we assume the metric $d$ to be finite for every $(x,y)\in X\times X$.
\end{defn}

\begin{defn}
 A map $f:X\to Y$ between metric spaces is called 
 \begin{itemize}
  \item \emph{coarsely uniform} if $E\s X^2$ being an entourage implies that $\zzp f E$ is an entourage ;
  \item \emph{coarsely proper} if and if $A\s Y$ is bounded then $\iip f A$ is bounded.
  \item \emph{coarse} if it is both coarsely uniform and coarsely proper
 \end{itemize}
 Two maps $f,g:X\to Y$ between metric spaces are called \emph{close} if
 \[
 f\times g(\Delta_X)
 \]
 is an entourage in $Y$. Here $\Delta_X$ denotes the diagonal in $X\times X$.
\end{defn}

If $S\s X\times X,T\s X$ are subsets of a set we write
\[
 S[T]:=\{x:\exists y\in T,(x,y)\in S\}
\]
and $T^c=\{x\in X:x\not\in T\}$.

\begin{notat}
 A map $f:X\to Y$ between metric spaces is called
 \begin{itemize}
 \item \emph{coarsely surjective} if there is an entourage $E\s Y\times Y$ such that 
 \[
  E[\im f]=Y;
 \]
 \item \emph{coarsely injective} if for every entourage $F\s Y^2$ the set $\izp f F$ is an entourage in $X$.
\end{itemize}
  Two subsets $A,B\s X$ are called \emph{not coarsely disjoint} if there is an entourage $E\s X^2$ such that the set
  \[
  E[A]\cap E[B]
  \]
  is not bounded. We write $A\close B$ in this case.
\end{notat}

\begin{rem}
 We study metric spaces up to coarse equivalence. For a coarse map $f:X\to Y$ between metric spaces the following statements are equivalent:
 \begin{itemize}
  \item There is a coarse map $g:Y\to X$ such that $f\circ g$ is close to $id_Y$ and $g\circ f$ is close to $id_X$.
 \item The map $f$ is both coarsely injective and coarsely surjective.
 \end{itemize}
 We call $f$ a \emph{coarse equivalence} if one of the equivalent statements hold.
\end{rem}

\begin{notat}
If $X$ is a metric space and $U_1,\ldots, U_n\s X$ are subsets then $(U_i)_i$ are said to \emph{coarsely cover} $X$ if for every entourage $E\s X\times X$ the set 
\[
E[U_1^c]\cap \cdots \cap E[U_n^c]
\]
is bounded.
\end{notat}

\section{The original definition}
 
 In this chapter we introduce the class of compactifications which are coarse. 
 
 \begin{defn}
  A \emph{compactification} of a proper metric space (or more generally a locally compact Hausdorff topological space) is an open embedding $i:X\to \bar X$ such that $i(X)$ is dense in $\bar X$. We identify $X$ with the dense open set $i(X)\subset \bar X$.
 \end{defn}

Now we define when a compactification is coarse. The original definition was given in \cite[Theorem~2.27, Definition~2.28, Definition~2.38]{Roe2003}. We reproduce a slight modification of it.
\begin{defn}
Let $X$ be a proper metric space. If $\bar X$ is a compactification of $X$ with 
boundary $\partial X$ then the sets $E\s X\times X$ with
\[
      \bar E\cap( \partial X\times \bar X\cup\bar X\times \partial X)\s 
\Delta_{\partial X}
\]
define the \emph{topological coarse structure} associated to $\bar X$.

       A \emph{coarse compactification} of $X$ is a compactification whose 
topological coarse structure is finer than the originally given coarse 
structure on $X$. 
\end{defn}

Note in \cite[Definition~1.1]{Fukaya2018} a \emph{coarse compactification} of a 
proper metric space has been defined as a metrizable compactification $\bar X$ 
of $X$ equipped with a continuous map $f:hX\to \bar X$ which is the identity on 
$X$. This definition is different from our definition.

\begin{defn}
Let $X$ be a metric space. Two subsets $A,B\s X$ are called \emph{close} if there exists an unbounded sequence $(a_i,b_i)_i\s A\times B$ and some $R\ge 0$ such that $d(a_i,b_i)\le R$ for every $i$. We write $A\close B$ in this case. By \cite[Lemma~9, Proposition~10]{Hartmann2017b} the relation $\close$ is a large-scale proximity relation.
\end{defn}

\begin{thm}
\label{thm:roevsclose}
      Let $X$ be a proper metric space and $\bar X$ be a compactification of 
$X$. Then $\bar X$ is coarse if and only if for every two subsets $A,B\s X$ the 
relation  $A\close B$ implies $\bar A\cap \bar B\not=\emptyset$. 
\end{thm}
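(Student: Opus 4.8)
The plan is to prove the two implications separately, using the reformulation that $\bar X$ is coarse exactly when every metric entourage $E\subseteq X\times X$ lies in the topological coarse structure, i.e. satisfies $\bar E\cap(\partial X\times \bar X\cup \bar X\times\partial X)\subseteq\Delta_{\partial X}$, all closures being taken in the compact Hausdorff space $\bar X$. Throughout I would fix a basepoint $x_0\in X$ and exploit the following consequence of properness: a point $p\in\bar X$ lies in $\partial X$ precisely when every net in $X$ converging to $p$ eventually leaves every metric ball, because a bounded set has compact closure in $X$, which is closed in $\bar X$ and misses $p$.

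For the forward direction, suppose $\bar X$ is coarse and $A\close B$. I would take the witnessing sequence $(a_i,b_i)\subseteq A\times B$, unbounded with $d(a_i,b_i)\le R$, and observe that $E=\{(a_i,b_i):i\}$ is an entourage. From $d(a_i,b_i)\le R$ and unboundedness of the pairs one gets that $\{a_i\}$ is unbounded, so after passing to a subsequence $d(x_0,a_i)\to\infty$, whence $d(x_0,b_i)\ge d(x_0,a_i)-R\to\infty$ as well. Compactness of $\bar X\times\bar X$ yields a convergent subnet $(a_i,b_i)\to(p,q)$ with $p,q\in\partial X$ and $(p,q)\in\bar E$. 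Coarseness forces $(p,q)\in\Delta_{\partial X}$, i.e. $p=q$, and since $a_i\to p$, $b_i\to q=p$ this common point lies in $\bar A\cap\bar B$, which is therefore nonempty.

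For the reverse direction, assume the implication $A\close B\Rightarrow\bar A\cap\bar B\neq\emptyset$ holds for all subsets, and let $E$ be any entourage with $d\le R$ on $E$. Given $(p,q)\in\bar E$ with $p\in\partial X$ or $q\in\partial X$, I would pick a net $(x_\lambda,y_\lambda)\in E$ converging to $(p,q)$ and argue by contradiction that $p=q$. If $p\neq q$, regularity of the compact Hausdorff space $\bar X$ provides open sets $U\ni p$, $V\ni q$ with $\bar U\cap\bar V=\emptyset$. Since one of $p,q$—hence, via $d\le R$, both—lie on $\partial X$, the net escapes every bounded set, so I can thin it to an honest sequence $(x_{\lambda_n},y_{\lambda_n})$ with $x_{\lambda_n}\in U$, $y_{\lambda_n}\in V$ and $d(x_0,x_{\lambda_n})>n$. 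Then $A=\{x_{\lambda_n}:n\}\subseteq U$ and $B=\{y_{\lambda_n}:n\}\subseteq V$ satisfy $A\close B$, yet $\bar A\subseteq\bar U$ and $\bar B\subseteq\bar V$ are disjoint, contradicting the hypothesis. Hence $p=q$, and since one coordinate lies in $\partial X$ both do, so $(p,q)\in\Delta_{\partial X}$; as $E$ was an arbitrary entourage, $\bar X$ is coarse.

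I expect the main obstacle to be the passage from the abstract net $(x_\lambda,y_\lambda)$ to a genuine sequence in the reverse direction: because $\bar X$ need not be metrizable one cannot invoke sequential compactness, while the relation $\close$ is phrased through sequences, so some care is needed to extract a subsequence that simultaneously remains in the separating neighborhoods $U,V$ and runs off to infinity. The clean separation with \emph{disjoint closures} and the bookkeeping that drives both coordinates onto the boundary through the uniform bound $d\le R$ are the delicate points; the remainder reduces to properness of $X$ together with compactness of $\bar X$.
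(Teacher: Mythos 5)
Your proof is correct and its overall architecture is the same as the paper's: in the forward direction you package the witnessing pairs of $A\close B$ into an entourage, push both coordinates to the boundary using properness, and let coarseness identify the two limit points inside $\bar A\cap\bar B$; in the reverse direction you take a net in an entourage converging to $(p,q)$ with a boundary coordinate, note that the bound $d\le R$ together with properness forces the other coordinate onto $\partial X$ as well, and manufacture two close subsets whose closures would have to meet. The one place you genuinely diverge is how the reverse direction closes: the paper works with the full ranges $\{x_i\}$, $\{y_i\}$ of the net, splits into cases according to whether their intersection is bounded, and deduces $p=q$ from $\overline{\{x_{i_k}\}}\cap\overline{\{y_{i_k}\}}\neq\emptyset$, whereas you first separate $p\neq q$ by open sets $U,V$ with disjoint closures and then thin the net to a sequence trapped in $U\times V$ that escapes every ball, so that $A\subseteq U$, $B\subseteq V$, $A\close B$ yields an immediate contradiction. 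Your variant buys a cleaner endgame: it avoids the case analysis and the somewhat delicate step of passing from ``the closures of the two ranges intersect'' to ``the limits coincide'' (the ranges of a net converging to $p$ may a priori accumulate at boundary points other than $p$), at the modest cost of invoking regularity of the compact Hausdorff space $\bar X$. Your worry about extracting a sequence from a net in a non-metrizable space is unfounded and resolves exactly as you suggest: for each $n$ the conditions $x_\lambda\in U$, $y_\lambda\in V$, $d(x_0,x_\lambda)>n$ are each eventually satisfied, so a simultaneous choice of $\lambda_n$ exists, and that is all the relation $\close$ requires.
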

\begin{proof}
      Suppose $\bar X$ is coarse. Let $A,B\s X$ be two subsets with $A\close B$. Then there exist unbounded subsequences $(a_i)_i\s A,(b_i)_i\s B$ such that $(a_i,b_i)_i$ is an entourage. Then $\overline{(a_i,b_i)_i}\cap(\partial 
X\times\bar X\cup \bar X\times \partial X)\s \Delta_{\partial X}$. Thus if $p\in \partial X$ is a limit point of $(a_i)_i$ then it is also a limit point of $(b_i)_i$. Note $\overline{(a_i)_i}\cap (\partial X)\not=\emptyset$  since $(a_i)_i$ is unbounded.  This way we have shown $\overline{(a_i)_i}\cap\overline{(b_i)_i}\not=\emptyset$. This implies $\bar A\cap \bar B\not=\emptyset$.

Now suppose for every two subsets $A,B\s X$ the relation $A\close B$ implies 
$\bar A\cap \bar B\not=\emptyset$. Let $E\s X\times X$ be an entourage and 
$(x_i,y_i)_i\s E$ be a net such that $(x_i)_i\to p\in\partial X$ and 
$(y_i)_i\to q\in \bar X$. If $q\in X$ then there is an infinite subnet of $(y_i)_i$ contained in a ball around $q$. Then an infinite subnet of $(x_i)_i$ is contained in a (larger) ball around $q$, thus would have a limit point in this ball. This way we can conclude $q\in \partial X$.

If $(x_i)_i\cap (y_i)_i$ is bounded then remove those finitely many elements in 
the intersection and obtain $(x_{i_k})_k,(y_{i_k})_k$ subnets with the 
same limit points. Now $(x_{i_k})_k\close (y_{i_k})_k$ which implies 
$\overline{(x_{i_k})_k}\cap \overline{(y_{i_k})_k}\not=\emptyset$. This implies 
$p=q$.

If $(x_i)_i\cap (y_i)_i$ is not bounded then the subnet in the intersection 
converges to both $p$ and $q$. Thus $p=q$.

This way we have shown $ \bar E\cap( \partial X\times \bar X\cup \bar X\times 
\partial X)\s 
\Delta_{\partial X}$. Thus $\bar X$ is coarse.
\end{proof}

\section{Large-scale proximity relations}

 In this chapter we study a relation on subsets of a proper metric space $X$ which induce the topology of a compactification $\bar X$. The relation is large-scale proximity as defined below exactly when the compactification $\bar X$ is coarse. Given a large-scale proximity relation $r$ on $X$ we are going to present two constructions of spaces $\partial_r X,\partial_r' X$ which happen to be boundaries of a coarse compactification $\bar X^r$ which induces the relation $r$ on subsets of $X$. 

 \begin{defn}
\label{defn:closerelation}
A relation $r$ on subsets of a metric space is called \emph{large-scale proximity} if
\begin{enumerate}
\item if $B\s X$ then $B\bar r B$ if and only if $B$ is bounded;
\item  $A r B$ implies $B r A$ for every $A,B\s X$;
\item if $A,A'\s X$ are subsets and $E\s X^2$ is an entourage with $E[A]\z 
A',E[A']\z A$ then $A r B$ implies $A' r B$ for every $B\s X$; 
\item if $A,B,C\s X$ then $(A\cup B)r C$ if and only if ($A r C$ or $B r C$);
\item if $A,B\s X$ with $A\bar r B$ then there exist $C,D\s X$  with $C\cup D=X$ 
and $C\bar r A,D\bar r B$.
\end{enumerate}
\end{defn}

\begin{rem}
Note a large-scale proximity relation on a metric space is an instance of a coarse proximity relation as defined in \cite[Definition~2.2]{Grzegrzolka2018a}. Compare this 
notion with the notion of proximity relation \cite{Naimpally1970},\cite{Willard1970}. The axiom 3 of Definition~\ref{defn:closerelation} is the characteristic for our application on coarse metric spaces.
\end{rem}

\begin{lem}
\label{lem:finerthanclose}
Let $X$ be a metric space. Every large-scale proximity relation on $X$ is finer than the relation close $\close$.
\end{lem}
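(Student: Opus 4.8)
The plan is to prove the containment $\close\s r$, that is, that $A\close B$ implies $A r B$ for every large-scale proximity relation $r$ and all $A,B\s X$; this is the meaning of ``finer'' here, and it is the direction consistent with the Higson compactification (whose relation is exactly $\close$) being the \emph{largest} coarse compactification, so that any $r=r_{\bar X}$ relates at least as many pairs as $\close$. I would argue directly from the five axioms, using the sequential description of $\close$. Assume $A\close B$ and fix an unbounded sequence $(a_i,b_i)_i\s A\times B$ together with some $R\ge 0$ such that $d(a_i,b_i)\le R$ for all $i$. Put $A_0=\{a_i:i\}$ and $B_0=\{b_i:i\}$.

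The argument then splits into four short steps. First, since $d(a_i,b_i)\le R$, the range of the sequence is unbounded in $X^2$ exactly when $A_0$ is unbounded in $X$; as $(a_i,b_i)_i$ is unbounded, $A_0$ is unbounded, so axiom 1 yields $A_0 r A_0$. Second, set $E=\{(x,y):d(x,y)\le R\}$, which is an entourage; the bound $d(a_i,b_i)\le R$ gives $B_0\s E[A_0]$ and $A_0\s E[B_0]$, so $A_0$ and $B_0$ are coarsely equivalent in the sense required by axiom 3. Applying axiom 3 with the roles $A:=A_0$, $A':=B_0$ and testing against $A_0$ converts $A_0 r A_0$ into $B_0 r A_0$, and symmetry (axiom 2) gives $A_0 r B_0$. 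Third, I would extract monotonicity from axiom 4: for $S\s T$ one has $T=S\cup(T\setminus S)$, whence $S r C$ implies $T r C$. Applying this twice, along $A_0\s A$ and $B_0\s B$, upgrades $A_0 r B_0$ to $A r B$, which is the claim.

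The computation is routine rather than delicate, so the one thing I expect to be an actual obstacle is getting the \emph{direction} right, since the reverse inclusion is false: for instance the one-point compactification is coarse and relates every pair of unbounded sets, many of which fail to be close. The conceptual crux is the observation that a pairing at bounded distance makes $A_0$ and $B_0$ interchangeable under axiom 3, so the whole statement collapses onto reflexivity of $r$ on unbounded sets, namely axiom 1. It is worth noting that axiom 5 plays no role in this particular lemma.
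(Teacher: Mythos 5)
Your proof is correct and follows essentially the same route as the paper's: extract the unbounded paired subsequences, get reflexivity on the unbounded set from axiom 1, transfer across the entourage via axiom 3, and pass from the subsequences up to $A,B$ via the monotonicity implicit in axiom 4. The only difference is that you make explicit the use of axiom 1 and the monotonicity consequence of axiom 4, both of which the paper leaves implicit; your reading of ``finer'' ($A\close B\Rightarrow ArB$) is also the one the paper intends.
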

\begin{proof}
Let $r$ be a large-scale proximity relation on a metric space $X$. If $A\close B$ then there  exist unbounded $(a_i,b_i)_i\s X^2$ and an entourage $E\s X^2$ such that 
$E[(a_i)_i]\z (b_i)_i$ and $E[(b_i)_i]\z (a_i)_i$. This implies $(a_i)_i r 
(b_i)_i$ by axiom 3 of Definition~\ref{defn:closerelation}. By axiom 4 of 
Definition~\ref{defn:closerelation} the relation $A r B$ holds.
\end{proof}

Now we construct a topological space $\partial_r' X$ given a large-scale proximity relation $r$. This will turn out to be the boundary of a coarse compactification. The topology on this construction is easier to describe than in the other equivalent definition which will follow below.

\begin{defn}
Let $r$ be a large-scale proximity relation on a metric space $X$. A system 
$\sheaff$ of subsets of $X$ is called an $r$-ultrafilter if
\begin{enumerate}
      \item $A,B\in\sheaff$ implies $A r B$;
      \item if $A,B\s X$ are subsets with $A\cup B\in \sheaff$ then $A\in 
\sheaff$ or $B\in \sheaff$;
\item $X\in\sheaff$.
\end{enumerate}
Denote by $\delta_r X$ the set of $r$-ultrafilters. If $A\s X$ is a subset then 
define 
\[
     \closedop A:=\{\sheaff\in \delta_r X: A\in\sheaff\}.
\]
\end{defn}

\begin{lem}
If $X$ is a metric space the $(\closedop A ^c)_{A\s X}$ constitute a base for a topology on $\delta_r X$.
\end{lem}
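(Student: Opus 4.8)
The plan is to verify the two defining properties of a base for a topology: that the family $(\closedop A^c)_{A\s X}$ covers $\delta_r X$, and that the intersection of any two of its members is again a union of members. Both should reduce to the elementary algebra of the operator $\closedop(\cdot)$ together with the axioms of an $r$-ultrafilter. For the covering property I would first observe that the empty set lies in no $r$-ultrafilter: since $\emptyset$ is bounded, axiom~1 of Definition~\ref{defn:closerelation} gives $\emptyset\bar r\emptyset$, so if $\emptyset\in\sheaff$ then axiom~1 of the $r$-ultrafilter (with $A=B=\emptyset$) would force $\emptyset r\emptyset$, a contradiction. Hence $\closedop\emptyset=\emptyset$, so $\closedop\emptyset^c=\delta_r X$ and the family already covers the whole space.

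For the intersection property the natural claim is the identity
\[
\closedop A^c\cap\closedop B^c=\closedop(A\cup B)^c\qquad(A,B\s X),
\]
which exhibits the intersection of two basic sets as a single basic set. Passing to complements, this reads $A\cup B\in\sheaff$ if and only if $A\in\sheaff$ or $B\in\sheaff$. The implication ``$A\cup B\in\sheaff\Rightarrow A\in\sheaff$ or $B\in\sheaff$'' is precisely axiom~2 of the $r$-ultrafilter. The converse implication is the monotonicity statement ``$A\in\sheaff$ and $A\s A'$ imply $A'\in\sheaff$'' (applied with $A'=A\cup B$), and establishing this monotonicity is the crux of the argument.

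To prove monotonicity I would first record that the relation $r$ itself is monotone: if $A\s A'$ and $ArC$, then writing $A'=A\cup(A'\ohne A)$ and invoking axiom~4 of Definition~\ref{defn:closerelation} yields $A'rC$. Consequently, if $A\in\sheaff$ and $A\s A'$, then by axiom~1 of the $r$-ultrafilter the set $A'$ is $r$-related to every member of $\sheaff$. It remains to upgrade this to $A'\in\sheaff$, using that $\sheaff$ is decisive: from $X=A'\cup(A')^c\in\sheaff$ and axiom~2 we get $A'\in\sheaff$ or $(A')^c\in\sheaff$, and the second alternative must be excluded. This last step is the main obstacle, since $A\in\sheaff$ and $(A')^c\in\sheaff$ only produce $A\,r\,(A')^c$ from axiom~1, and the disjointness of $A$ and $(A')^c$ does not by itself contradict this (disjoint subsets may still meet on the boundary). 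I expect to close the gap with the separation axiom~5 of Definition~\ref{defn:closerelation}: starting from a non-relation it produces a cover $X=C\cup D$ separating the relevant sets, which, played against the fact that $A'$ is $r$-related to all of $\sheaff$, contradicts $(A')^c\in\sheaff$.

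Once monotonicity is in hand the displayed identity holds, so the intersection of any two basic sets is itself basic and in particular a union of basic sets. Together with the covering property established above, this verifies both base axioms and shows that $(\closedop A^c)_{A\s X}$ is a base for a topology on $\delta_r X$.
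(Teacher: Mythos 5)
Your route is the same as the paper's: cover $\delta_r X$ by showing $\closedop \emptyset=\emptyset$, and handle intersections via the candidate $\closedop {A\cup B}^c$. The covering step and the inclusion $\closedop A^c\cap\closedop B^c\s\closedop {A\cup B}^c$ (the contrapositive of axiom~2 of an $r$-ultrafilter) are correct and agree with the paper. The genuine gap is the reverse inclusion $\closedop {A\cup B}^c\s\closedop A^c\cap\closedop B^c$, which, as you correctly observe, is exactly upward closure of $r$-ultrafilters ($A\in\sheaff$, $A\s A'$ imply $A'\in\sheaff$) --- and which you do not actually prove. Your proposed repair via axiom~5 of Definition~\ref{defn:closerelation} cannot work as sketched: at the decisive moment you have $A\in\sheaff$, $A\s A'$ and, for contradiction, $(A')^c\in\sheaff$, and the only output of the ultrafilter axioms is the \emph{positive} relation $A\,r\,(A')^c$; axiom~5 needs a non-relation as input and you have none available. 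Worse, upward closure is not a consequence of the three listed axioms. For instance on $X=\Z_{\ge0}$ with $r=\close$, let $\sigma$ be a nonprincipal ultrafilter on $2\Z_{\ge0}$ and let $\mathcal G$ consist of all $S$ with $S\cap2\Z_{\ge0}\in\sigma$ or $(S-1)\cap2\Z_{\ge0}\in\sigma$; this is an upward-closed $r$-ultrafilter. Putting $A'=2\Z_{\ge0}\cup\{1\}$ and deleting from $\mathcal G$ every $S\z A'$ with $S\setminus A'\notin\mathcal G$ yields a family still satisfying axioms~1--3 that contains $2\Z_{\ge0}$ but not $A'$. So the identity $\closedop A^c\cap\closedop B^c=\closedop{A\cup B}^c$ is false as stated, and the monotonicity step cannot be closed from the axioms as written.

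In fairness, the paper's own proof asserts the very same inclusion $\closedop{A\cup B}^c\s\closedop A^c\cap\closedop B^c$ without justification, so you have located a real soft spot rather than overlooked an argument present in the paper. The intended reading is surely that $r$-ultrafilters are upward closed --- this is the absorption property built into clusters in proximity theory (Naimpally--Warrack) --- and once that clause is added to the definition the displayed identity is immediate in both directions, your argument closes, and it coincides with the paper's. Alternatively, the lemma can be weakened to the claim that the sets $\closedop A^c$ form a subbase, which suffices for Definition~\ref{defn:relationtocompactification1}.
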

\begin{proof}
First we show the base elements cover $\delta_r X$: Since $\emptyset$ is bounded $\emptyset \bar r X$. This implies $\emptyset\not \in \sheaff$ for every $r$-ultrafilter $\sheaff$. Thus $\delta_r X=\closedop \emptyset^c$.

Now we show for every element in the intersection of two base elements there is 
a base element which contains the element and is contained in the intersection: 
Let $A,B\s X$ be two subsets. Let $\sheaff\in \closedop A^c\cap \closedop B^c$ 
be an element. Then $A\not\in \sheaff, B\not \in \sheaff$ thus $(A\cup 
B)\not\in \sheaff$. This implies $\sheaff \in \closedop {A\cup B}^c\s 
\closedop A^c\cap \closedop B^c$.
\end{proof}

\begin{defn}
\label{defn:relationtocompactification1}
Define the topology on $\delta_r X$ to be the topology generated by $(\closedop A^c)_{A\s X}$.

Now define a relation $\lambda_r$ on $\delta_r X$: $\sheaff\lambda_r \sheafg$ 
if $A\in \sheaff,B\in \sheafg$ implies $A r B$. The quotient by this equivalence relation $\partial'_r X=\delta_r X/\lambda_r$ is called \emph{$r$-boundary 1}. 
\end{defn}

\begin{lem}
The space $\partial'_r(X)$ is a compact Hausdorff topological space.
\end{lem}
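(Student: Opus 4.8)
The plan is to establish three things in turn: that $\lambda_r$ is genuinely an equivalence relation, that $\delta_r X$ is compact, and that the quotient is Hausdorff; compactness of $\partial'_r X$ then drops out since it is a continuous image of $\delta_r X$. Reflexivity of $\lambda_r$ is exactly axiom 1 of an $r$-ultrafilter and symmetry is axiom 2 of a large-scale proximity relation, so the only substantive point is transitivity, which is the first genuine use of axiom 5. Given $\sheaff\lambda_r\sheafg$ and $\sheafg\lambda_r\sheafg'$, suppose $A\in\sheaff$ and $C\in\sheafg'$ with $A\bar r C$; axiom 5 yields $P,Q$ with $P\cup Q=X$, $P\bar r A$, $Q\bar r C$, and since $X\in\sheafg$, axiom 2 of the ultrafilter forces $P\in\sheafg$ or $Q\in\sheafg$. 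The first case gives $A r P$ (contradicting $P\bar r A$) via $\sheaff\lambda_r\sheafg$, the second gives $Q r C$ (contradicting $Q\bar r C$) via $\sheafg\lambda_r\sheafg'$; hence $A r C$, so $\sheaff\lambda_r\sheafg'$.

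For compactness of $\delta_r X$, note that since $(\closedop A^c)_{A\s X}$ is a base for the topology, the sets $\closedop A$ form a base of closed sets, so by the Alexander subbase lemma it suffices to show that a family $(\closedop{A_i})_{i\in I}$ with the finite intersection property has nonempty intersection. Unravelled, this says: a family $\mathcal A=\{A_i\}$, every finite subfamily of which lies in some $r$-ultrafilter, is itself contained in one. I would prove this extension statement by Zorn's lemma applied to the property ``every finite subfamily lies in some $r$-ultrafilter'', which is visibly preserved under unions of chains. A maximal such family $\sheaff\supseteq\mathcal A$ is then an $r$-ultrafilter: axiom 1 is immediate and axiom 3 follows from maximality, while for axiom 2 one observes that if $A\cup B\in\sheaff$ but $A,B\notin\sheaff$, maximality provides finite $F_A,F_B\s\sheaff$ with $\{A\}\cup F_A$ and $\{B\}\cup F_B$ lying in no $r$-ultrafilter, whereas $F_A\cup F_B\cup\{A\cup B\}$ does lie in some $r$-ultrafilter $\sheafg$, and axiom 2 for $\sheafg$ puts $A$ or $B$ into $\sheafg$, a contradiction. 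Thus $\delta_r X$ is compact, and $\partial'_r X$ is compact as its continuous image.

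The hard part is Hausdorffness of the quotient, and this is where axiom 5 is used decisively a second time. Given $[\sheaff]\ne[\sheafg]$ there are $A\in\sheaff$, $B\in\sheafg$ with $A\bar r B$, and axiom 5 produces $C,D$ with $C\cup D=X$, $C\bar r A$, $D\bar r B$. Then $\closedop C^c$ and $\closedop D^c$ are disjoint basic open sets containing $\sheaff$ and $\sheafg$ (disjoint because $\closedop C\cup\closedop D\z\closedop{C\cup D}=\closedop X=\delta_r X$), and each in fact contains the whole class: if $\sheaff'\lambda_r\sheaff$ and $C\in\sheaff'$ then $C r A$, contradicting $C\bar r A$, so $[\sheaff]\s\closedop C^c$ and likewise $[\sheafg]\s\closedop D^c$. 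The obstacle I expect to dominate the proof is that these sets need not be saturated, hence do not descend to the quotient directly; the fix is to show that $\pi$ sends basic closed sets to closed sets, i.e. that the $\lambda_r$-saturation $\pi^{-1}\pi(\closedop C)$ of $\closedop C$ is closed. This I would prove by a compactness/tube argument: any point outside the saturation is non-$\lambda_r$-related to every element of the compact set $\closedop C$ and can be separated from each by the basic opens just described (which contain whole classes); covering $\closedop C$ by finitely many of the ``far'' sides and intersecting the corresponding ``near'' sides produces an open neighbourhood of the point meeting no whole class that touches $\closedop C$, hence missing the saturation. Once $\pi$ is closed, $\partial'_r X\setminus\pi(\closedop C)$ and $\partial'_r X\setminus\pi(\closedop D)$ are disjoint open sets separating $[\sheaff]$ and $[\sheafg]$, so $\partial'_r X$ is Hausdorff.
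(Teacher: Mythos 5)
Your three-part decomposition (equivalence relation, compactness of $\delta_r X$, Hausdorffness of the quotient) is sound, and the first two parts are correct --- indeed more self-contained than the paper's proof, which cites an external theorem for compactness and does not verify that $\lambda_r$ is an equivalence relation here at all (it does so only for the second model on $\hat S$, with the same axiom-5 argument you give). Your Zorn's lemma extension argument for the finite intersection property is a valid replacement for that citation. You are also right that the delicate point in the Hausdorff step is that the basic opens $\closedop C^c,\closedop D^c$ need not be $\lambda_r$-saturated; the paper's own proof stops at exhibiting these disjoint opens around the two representatives and does not address this.

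However, your tube argument for the closedness of the saturation $\pi^{-1}\pi(\closedop C)$ has a genuine gap. The sets $\closedop{P}^c$ and $\closedop{Q}^c$ obtained from one application of axiom 5 contain the full classes of the two \emph{designated} points (because $P\bar r A$ with $A\in\sheaff$, etc.), but they are not saturated, and --- what your covering argument actually needs --- it is false in general that no point of $\closedop{P}^c$ is $\lambda_r$-related to a point of $\closedop{Q}^c$. Indeed, if $\sheaff'\in\closedop{P}^c$ and $\sheafg'\in\closedop{Q}^c$ with $\sheaff'\lambda_r\sheafg'$, then $Q\in\sheaff'$ and $P\in\sheafg'$ (since $P\cup Q=X$ lies in every $r$-ultrafilter), and all one can deduce is $P\,r\,Q$ --- which axiom 5 does not exclude, as it only controls $P$ against $A$ and $Q$ against $B$. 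So a point of $\closedop C$ covered by a ``far'' side $\closedop{Q_i}^c$ may still have an equivalent partner inside $\bigcap_i\closedop{P_i}^c$, and the neighbourhood you build need not miss the saturation. The repair is a second application of axiom 5: from $A\bar r P$ obtain $P_1,Q_1$ with $P_1\cup Q_1=X$, $P_1\bar r A$ and $Q_1\bar r P$. Then $\sheaff\in\closedop{P_1}^c$, every ultrafilter in $\closedop{P_1}^c$ contains $Q_1$, every ultrafilter in $\closedop{Q}^c$ contains $P$, and a cross-equivalence between these two sets would force $Q_1\,r\,P$, contradicting $Q_1\bar r P$. Running your compactness argument with the pairs $\bigl(\closedop{P_1}^c,\closedop{Q}^c\bigr)$ closes the gap, and the rest of your Hausdorff proof (disjointness and openness of the complements of $\pi(\closedop C)$ and $\pi(\closedop D)$) then goes through as written.
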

\begin{proof}
The proof of \cite[Theorem 26]{Hartmann2019a} with $\close$ replaced by $r$ 
implies that $\partial'_r (X)$ is compact.

Now we show $\partial'_r(X)$ is Hausdorff. Let $\sheaff,\sheafg$ be two 
$r$-ultrafilters with $\sheaff\bar\lambda_r \sheafg$. Thus there exist $A\in 
\sheaff,B\in \sheafg$ with $A\bar r B$. Then there exist $C,D\s X$ with $C\cup 
D=X$ and $C\bar r A, D\bar r B$. Then $\sheafg\in\closedop D^c,\sheaff \in 
\closedop C^c$. Also:
\f{
\closedop C^c\cap \closedop D^c
&=(\closedop C \cup \closedop D)^c\\
&=\closedop X^c\\
&=\emptyset.
}
\end{proof}

This completes our discussion of $\partial_r'(X)$. We now define another topological space $\partial_r X$ given a large-scale proximity relation. This space is homeomorphic to $\partial_r' X$. Compared with the previous model the points on $\partial_r X$ are easier to describe.

Let $R\ge0$ be a real number. A metric space $X$ is called $R$-discrete if $d(x,y)\ge R$ for every $x\not=y$. If $X$ is a metric space an $R$-discrete for some $R>0$ subspace $S\s X$ is called a Delone set if the inclusion $S\to X$ is coarsely surjective. Every metric space contains a Delone set. 

\begin{defn}
Let $r$ be a large-scale proximity relation on a proper metric space $X$ and $S\s X$ a Delone subset. Denote by $\hat S$ the set of nonprincipal ultrafilters on 
$S$. If $A\s S$ is a subset define
\[
\closedop A:=\{\sheaff\in \hat S:A\in \sheaff\}.
\]
Then define a relation $r$ on subsets of $\hat S$: $\pi_1 r \pi_2$ if for every 
$A,B\s S$ the relations $\pi_1\s \closedop A,\pi_2\s \closedop B$ imply $A r 
B$.
\end{defn}

\begin{lem}
The relation $r$ on $\hat S$ is a proximity relation.
\end{lem}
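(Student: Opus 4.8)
The plan is to verify directly the five Efremovich axioms of a proximity relation for the induced relation $r$ on the power set of $\hat S$, reducing each to the corresponding axiom of the large-scale proximity $r$ on $X$ through the Stone dictionary between subsets of $S$ and clopen subsets of $\hat S$. The key bookkeeping device is the contrapositive form of the definition: for $\pi_1,\pi_2\subseteq\hat S$ we have $\pi_1\bar r\pi_2$ precisely when there are $A,B\subseteq S$ with $\pi_1\subseteq\closedop A$, $\pi_2\subseteq\closedop B$ and $A\bar r B$. I would record at the outset the lattice identities $\closedop{A\cup B}=\closedop A\cup\closedop B$, $\closedop{A\cap B}=\closedop A\cap\closedop B$ and $\hat S\setminus\closedop A=\closedop{S\setminus A}$, valid because each nonprincipal ultrafilter contains exactly one of $A,S\setminus A$. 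From the definition one reads off monotonicity of $r$ on $\hat S$: if $\pi\subseteq\pi'$ and $\pi r\sigma$ then $\pi' r\sigma$, since enlarging the first argument only restricts the witnessing sets $A$. Symmetry is then immediate from axiom 2 of Definition~\ref{defn:closerelation}.

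For additivity --- $(\pi_1\cup\pi_2)\,r\,\pi_3$ holds if and only if $\pi_1\,r\,\pi_3$ or $\pi_2\,r\,\pi_3$ --- the forward implication is monotonicity; for the reverse I would take witnesses $A_i\supseteq\pi_i$, $B_i\supseteq\pi_3$ with $A_i\bar r B_i$ and combine them into $A_1\cup A_2$ and $B_1\cap B_2$, using axiom 4 together with monotonicity of $r$ on $X$ (itself a consequence of axiom 4) to conclude $(A_1\cup A_2)\bar r(B_1\cap B_2)$. For the empty-set axiom I first prove the auxiliary fact that $\emptyset\bar r C$ for every $C\subseteq X$: applying axiom 5 to $\emptyset\bar r\emptyset$ (valid by axiom 1) yields a decomposition $X=C'\cup D'$ with $C'\bar r\emptyset$ and $D'\bar r\emptyset$, whence symmetry and axiom 4 give $\emptyset\bar r X$ and monotonicity gives $\emptyset\bar r C$ for all $C$; taking $A=\emptyset$, $B=S$ then witnesses $\emptyset\bar r\pi$ on $\hat S$. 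Reflexivity, $\pi_1\cap\pi_2\neq\emptyset\Rightarrow\pi_1 r\pi_2$, is where the Delone hypothesis enters: if $\sheaff\in\pi_1\cap\pi_2$ and $\pi_1\subseteq\closedop A$, $\pi_2\subseteq\closedop B$, then $A,B\in\sheaff$, so $A\cap B\in\sheaff$ is infinite; since $S$ is $R$-discrete in the proper space $X$, every infinite subset is unbounded, so axiom 1 forces $(A\cap B)\,r\,(A\cap B)$, and monotonicity gives $A r B$.

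The main work, and the step I expect to be the crux, is the strong (normality) axiom: from $\pi_1\bar r\pi_2$ produce $\Sigma\subseteq\hat S$ with $\pi_1\bar r\Sigma$ and $(\hat S\setminus\Sigma)\bar r\pi_2$. Starting from witnesses $A,B\subseteq S$ with $\pi_1\subseteq\closedop A$, $\pi_2\subseteq\closedop B$, $A\bar r B$, I would apply axiom 5 of the large-scale proximity on $X$ to obtain $C,D\subseteq X$ with $C\cup D=X$, $C\bar r A$, $D\bar r B$; intersecting with $S$ and using monotonicity I may assume $C,D\subseteq S$ with $C\cup D=S$. The candidate is $\Sigma=\closedop C$: since $\Sigma\subseteq\closedop C$ and $\pi_1\subseteq\closedop A$ with $A\bar r C$, we get $\pi_1\bar r\Sigma$, while $\hat S\setminus\Sigma=\closedop{S\setminus C}\subseteq\closedop D$ because $S\setminus C\subseteq D$, and $\pi_2\subseteq\closedop B$ with $B\bar r D$ gives $(\hat S\setminus\Sigma)\bar r\pi_2$. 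The delicate points here are purely formal: tracking that the complement identity moves the cover $C\cup D=S$ to the cover $\closedop C\cup\closedop D=\hat S$, and that passing from the ambient $X$ to $S$ preserves all $\bar r$-relations via monotonicity. I would finally remark that the verification uses only axioms 1, 2, 4, 5, with the role of properness and the Delone property concentrated entirely in reflexivity, which is the one place where the discrete geometry of $S$ is indispensable.
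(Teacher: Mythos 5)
Your verification is correct and complete, but it is worth saying how it relates to the paper: the paper does not prove this lemma at all — it simply cites \cite[Theorem~23]{Hartmann2019a} ``with $r$ in place of $\close$'', so your write-up supplies in-house the axiom check that the paper outsources. Your reduction of each Efremovich axiom through the identities $\closedop{A\cup B}=\closedop A\cup\closedop B$, $\closedop{A\cap B}=\closedop A\cap\closedop B$, $\hat S\setminus\closedop A=\closedop{S\setminus A}$ is sound, and you correctly isolate where each hypothesis is used: axioms 1, 2, 4, 5 of Definition~\ref{defn:closerelation} do all the work except in reflexivity, where the nonprincipality of the ultrafilters together with $R$-discreteness of $S$ and properness of $X$ is exactly what makes $A\cap B$ unbounded so that axiom 1 applies. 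The derivation of $\emptyset\,\bar r\,C$ from axioms 1, 5, 2, 4 is a nice touch, since this fact is not literally one of the listed axioms. One small slip: in the additivity step you label the implication $(\pi_1\cup\pi_2)\,r\,\pi_3\Rightarrow(\pi_1\,r\,\pi_3\text{ or }\pi_2\,r\,\pi_3)$ as ``monotonicity'' — monotonicity actually gives the converse, and the witness-combining argument with $A_1\cup A_2$ and $B_1\cap B_2$ is the contrapositive proof of the forward direction. The two arguments you give are exactly the two halves needed, so only the labels are swapped, not the mathematics. The normality step with $\Sigma=\closedop C$ after replacing $C,D$ by $C\cap S, D\cap S$ is the crux and is handled correctly.
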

\begin{proof}
The proof of \cite[Theorem~23]{Hartmann2019a} with $r$ in place of $\close$ 
applies.
\end{proof}

As before we define a topology on $\hat S$:
\begin{defn}
The relation $r$ on subsets of $\hat S$ determines a Kuratowski closure operator
\[
\bar \pi=\{\sheaff\in \hat S:\{\sheaff\} r \pi\}.
\]
Now define a relation $\lambda_r$ on $\hat S$: $\sheaff\lambda_r \sheafg$ if 
$A\in \sheaff,B\in \sheafg$ implies $A r B$. 
\end{defn}

\begin{lem}
The relation $\lambda_r$ is an equivalence relation on $\hat S$. 
\end{lem}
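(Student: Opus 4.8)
The plan is to verify the three defining properties of an equivalence relation—reflexivity, symmetry, and transitivity—for the relation $\lambda_r$ on the set $\hat S$ of nonprincipal ultrafilters, drawing on the axioms of a large-scale proximity relation from Definition~\ref{defn:closerelation} together with the defining properties of ultrafilters. Symmetry is immediate from axiom~2 of Definition~\ref{defn:closerelation}: if $\sheaff\lambda_r\sheafg$ then for $A\in\sheaff,B\in\sheafg$ we have $A r B$, hence $B r A$, which is exactly $\sheafg\lambda_r\sheaff$. Reflexivity should follow from the fact that a nonprincipal ultrafilter on a Delone set contains no bounded (hence finite) sets: given $A,B\in\sheaff$, the intersection $A\cap B\in\sheaff$ is unbounded, and unboundedness of $A\cap B\subseteq A\times B$-diagonal gives $A\close B$ via the identity entourage, whence $A r B$ by Lemma~\ref{lem:finerthanclose}. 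I would record carefully that $S$ being $R$-discrete forces every infinite subset to be unbounded, so that membership in a nonprincipal ultrafilter entails unboundedness.

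The transitivity step is where I expect the real work to lie, and it is the main obstacle. Suppose $\sheaff\lambda_r\sheafg$ and $\sheafg\lambda_r\sheafh$; I must show $\sheaff\lambda_r\sheafh$, i.e.\ that $A r C$ whenever $A\in\sheaff$ and $C\in\sheafh$. The natural strategy is to argue by contradiction: assume $A\bar r C$ for some $A\in\sheaff,C\in\sheafh$, and use axiom~5 of Definition~\ref{defn:closerelation} to split $X$ into $C'\cup D'=X$ with $C'\bar r A$ and $D'\bar r C$. Since $\sheafg$ is an ultrafilter and $C'\cup D'=X\in\sheafg$ (here I use axiom~3 of the $r$-ultrafilter definition, $X\in\sheafg$, together with the ultrafilter splitting property), one of $C'$ or $D'$ lies in $\sheafg$. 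If $C'\in\sheafg$, then $A\in\sheaff$ and $C'\in\sheafg$ together with $\sheaff\lambda_r\sheafg$ force $A r C'$; but $C'\bar r A$ contradicts symmetry. If instead $D'\in\sheafg$, then $D'\in\sheafg$ and $C\in\sheafh$ with $\sheafg\lambda_r\sheafh$ give $D' r C$, contradicting $D'\bar r C$. Either way we reach a contradiction, so no such $A,C$ exist and $\sheaff\lambda_r\sheafh$ holds.

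The delicate points I would check before writing the final proof are, first, that the ambient objects here are ultrafilters on $S$ rather than the $r$-ultrafilters of the earlier construction, so the ``splitting'' property I invoke is the genuine ultrafilter dichotomy ($C'\in\sheafg$ or $D'\in\sheafg$) applied to $C'\cup D'=X=S$; I must make sure the complementation in axiom~5 is compatible with working over the Delone subset $S$ rather than all of $X$, replacing $X$ by $S$ uniformly. Second, I would confirm that the relation $r$ transported to subsets of $S$ still satisfies the five axioms, which is precisely the content of the preceding lemma stating that $r$ on $\hat S$ is a proximity relation, so I may cite it. With reflexivity and symmetry being short and the transitivity argument resting squarely on axiom~5 and the ultrafilter dichotomy, the proof reduces to carefully bookkeeping these applications; the structural analogue for the relation $\close$ is already handled in \cite{Hartmann2019a}, so I anticipate the argument mirrors that reference with $\close$ replaced by $r$ throughout.
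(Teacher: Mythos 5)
Your proof is correct and follows essentially the same route as the paper: symmetry from axiom~2, reflexivity from the fact that elements of a nonprincipal ultrafilter on a Delone set are unbounded, and transitivity by contradiction via axiom~5 and the ultrafilter dichotomy applied to the middle filter. The extra care you take about working over $S$ rather than $X$ (intersecting the sets $C',D'$ from axiom~5 with $S$ and using axiom~4 to preserve $\bar r$) is a detail the paper glosses over, but it resolves exactly as you anticipate.
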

\begin{proof}
The relation is obviously symmetric and reflexive. We show transitivity. Let $\sheaff_1,\sheaff_2,\sheaff_3$ be nonprincipal ultrafilters on $X$ such that $\sheaff_1\lambda_r \sheaff_2$ and $\sheaff_2\lambda_r \sheaff_3$. We show $\sheaff_1\lambda_r \sheaff_3$. Assume the opposite. There are $A\in \sheaff_1,B\in \sheaff_3$ with $A \bar r B$. Then there exist $C,D\s X$ with $C\cup D=X$ and $C\bar r A,D\bar r B$. Now $C\in \sheaff_2$ or $D\in \sheaff_2$. If $C\in \sheaff_2$ this contradicts $\sheaff_2\lambda_r \sheaff_1$ and if $D\in \sheaff_2$ this contradicts $\sheaff_2\lambda_r \sheaff_3$.
\end{proof}

\begin{defn}
\label{defn:relationtocompactification2}
 Now the \emph{$r$-boundary 2} is defined $\partial_r(X)=\hat S/\lambda_r$ as the quotient by $\lambda_r$.
\end{defn}
 
We check this definition does not depend upon the choice of Delone set $S$:

\begin{lem}
If $T\s X$ is another Delone subset then $\hat S/\lambda_r=\hat 
T/\lambda_r$ are homeomorphic.
\end{lem}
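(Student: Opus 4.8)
The plan is to exploit the fact that any two Delone sets $S,T\subseteq X$ are coarsely equivalent, and that this coarse equivalence can be realized by maps of bounded displacement which induce homeomorphisms on the $r$-boundaries. Concretely, using that $T$ is coarsely surjective I would choose for each $s\in S$ a point $f(s)\in T$ with $d(s,f(s))\le R_0$ for a fixed $R_0$, and symmetrically a map $g:T\to S$ of bounded displacement; then $g\circ f$ is close to $\mathrm{id}_S$ and $f\circ g$ is close to $\mathrm{id}_T$. Each such $f$ pushes an ultrafilter $\mathcal F$ on $S$ forward to the ultrafilter $f_*\mathcal F=\{B\subseteq T: f^{-1}(B)\in\mathcal F\}$ on $T$, and I would show that this assignment descends to a homeomorphism $\bar f:\hat S/\lambda_r\to\hat T/\lambda_r$ with inverse $\bar g$.

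Two preliminary facts carry most of the work. First, $f_*$ maps nonprincipal ultrafilters to nonprincipal ones: since $S$ is $R$-discrete and $X$ is proper, each fiber $f^{-1}(t)\subseteq\{s: d(s,t)\le R_0\}$ is a bounded subset of a discrete set, hence finite, so no fiber can lie in a nonprincipal $\mathcal F$ and $f_*\mathcal F$ is again nonprincipal; thus $\hat f:\hat S\to\hat T$ is well defined. Second, $f$ preserves the relation $r$ on subsets: the graph $\{(s,f(s))\}$ is an entourage $E_f$, so $F[A]\supseteq f(A)$ and $F[f(A)]\supseteq A$ for the symmetric entourage $F=E_f\cup E_f^{-1}$, whence axiom 3 of Definition~\ref{defn:closerelation} gives $A\,r\,C\Leftrightarrow f(A)\,r\,C$ and, applied to both arguments together with symmetry (axiom 2), $A\,r\,B\Leftrightarrow f(A)\,r\,f(B)$ for all $A,B\subseteq S$. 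I will also record the monotonicity $A\subseteq A',\ A\,r\,B\Rightarrow A'\,r\,B$, which is immediate from axiom 4 after writing $A'=A\cup(A'\setminus A)$.

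With these in hand the verifications are formal. For continuity I translate membership in the pushforward, $A'\in f_*\mathcal F\Leftrightarrow f^{-1}(A')\in\mathcal F$, into the closure operator defining the topology on $\hat T$: given $\mathcal F$ in the closure of $\pi\subseteq\hat S$, the defining condition produces $f^{-1}(A')\,r\,f^{-1}(B')$ for the relevant $A',B'\subseteq T$, and applying preservation of $r$ followed by monotonicity (using $f(f^{-1}(A'))\subseteq A'$) yields $A'\,r\,B'$, so $f_*\mathcal F$ lies in the closure of $f_*\pi$; hence $\hat f$ is continuous. The same computation shows $\hat f$ is $\lambda_r$-equivariant, so it descends to a continuous $\bar f$, and likewise $\bar g$. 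Finally $g\circ f$ being close to $\mathrm{id}_S$ gives, for any nonprincipal $\mathcal F$ and any $A\in(g\circ f)_*\mathcal F$, $B\in\mathcal F$, that $A\,r\,B$: writing $A_0=(g\circ f)^{-1}(A)\in\mathcal F$ one has $A_0\,r\,B$ because both lie in $\mathcal F$, and closeness to the identity together with monotonicity passes this to $A\,r\,B$. Thus $(g\circ f)_*\mathcal F\ \lambda_r\ \mathcal F$, so $\bar g\circ\bar f=\mathrm{id}$ and symmetrically $\bar f\circ\bar g=\mathrm{id}$, making $\bar f$ a homeomorphism.

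The main obstacle I expect is the continuity step, because the topology on $\hat S$ is defined through the two-layered relation $r$ on subsets of $\hat S$ (a point lies in the closure of $\pi$ precisely when all subsets of $S$ separating them are $r$-related), so one must carefully move between ``$A'\in f_*\mathcal F$'' and ``$f^{-1}(A')\in\mathcal F$'' while only ever enlarging sets in the direction monotonicity permits. As a shortcut, once $\bar f$ and $\bar g$ are seen to be mutually inverse bijections one could instead invoke compactness: using the identification $\partial_r X\cong\partial'_r X$ and the fact that $\partial'_r X$ is compact Hausdorff, a continuous bijection between the quotients is automatically a homeomorphism, so it would suffice to check continuity of a single one of the two maps.
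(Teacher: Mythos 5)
Your proposal is correct and follows essentially the same route as the paper: push ultrafilters forward along a bounded-displacement map between the Delone sets, use the graph entourage together with axiom 3 (and monotonicity from axiom 4) to see that $r$ is preserved, check continuity and $\lambda_r$-equivariance, and conclude that the two induced maps are mutually inverse up to $\lambda_r$. The only difference is organizational — the paper reduces WLOG to $S\subseteq T$ and works with the inclusion and a retraction $\varphi\circ i=\mathrm{id}_S$, whereas you use a symmetric pair of closeness-inverse maps, and you additionally make explicit the (correct and worth recording) check that pushforwards of nonprincipal ultrafilters remain nonprincipal because fibers are finite.
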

\begin{proof}
Suppose $S,T\s X$ are two Delone sets. Without loss of generality 
assume $S\s T$ is a subset. Then there exists a map $\varphi:T\to S$ 
with $H:=\{(t,\varphi(t)):t\in T\}$ an entourage and $\varphi\circ i=id_S$ 
where $i:S\to T$ is the inclusion. There is an induced map
\f{
\varphi_*:\hat T&\to \hat S\\
\sheaff&\mapsto \{A\s S:\iip \varphi A\in \sheaff\}.
}
We show $\varphi_*$ is continuous and respects $\lambda_r$. If $\pi_1,\pi_2\s 
\hat T$ are subsets with $\pi_1 r \pi_2$ and $A,B\s S$ are subsets with 
$\varphi_*\pi_1\s \closedop A,\varphi_*\pi_2\s \closedop B$ then $\pi_1\s 
\closedop {\iip \varphi A},\pi_2\s \closedop{\iip \varphi B}$. Thus $\iip 
\varphi A r \iip \varphi B$. Since $H[\iip \varphi A]=A,\ii H[A]=\iip\varphi A$ 
and $H[\iip \varphi B]=B,\ii H[B]=\iip \varphi B$ this implies $A r B$. Thus 
$\varphi_*\pi_1 r \varphi_* \pi_2$.

Let $\sheaff,\sheafg\in \hat T$ be elements with $\sheaff\lambda_r \sheafg$. 
Let $A\in \varphi_*\sheaff,B\in\varphi_*\sheafg$ be elements. Then $\iip 
\varphi A\in \sheaff,\iip \varphi B\in\sheafg$. Thus $\iip \varphi A r \iip 
\varphi B$. This implies $A r B$ by the above.

Now the inclusion $i$ induces a map $i_*\hat S \to \hat T$ in a similar 
way. We show just as for $\varphi_*$ that $i_*$ is continuous and respects 
$\lambda_r$. We have $\varphi_*\circ i_*(\sheaff)=\sheaff$ for every 
$\sheaff\in \hat S$ and $i_*\circ \varphi_*(\sheafg)\lambda_r \sheafg$ for 
every $\sheafg\in \hat T$ since $A,B\in \sheafg$ implies $A r B$ which implies 
$\iip \varphi{A\cap S} r B$. 
\end{proof}

Comparing the first and the second model, we can prove:
\begin{prop}
If $X$ is a proper metric space the map 
      \f{
      \Phi:\partial_rX&\to \partial_r' X\\
      [\sigma]&\mapsto [\{A\s X: A r B \forall B\in \sigma\}]
      }
is a homeomorphism. 
\end{prop}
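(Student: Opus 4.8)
The plan is to realize $\Phi$ as a continuous bijection from a compact space to a Hausdorff space, so that it is automatically a homeomorphism and no separate verification of $\Phi^{-1}$ is needed. Since $\partial_r X=\hat S/\lambda_r$ is a quotient of the compact space $\hat S=\beta S\ohne S$, it is compact, and $\partial_r'X$ is Hausdorff by the lemma above; hence it suffices to show that $\Phi$ is well defined, injective, surjective and continuous. Throughout I write $\Psi(\sigma):=\{A\s X:A r B\ \forall B\in\sigma\}$ for the system attached to an ultrafilter $\sigma\in\hat S$.

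First I would check that $\Psi(\sigma)$ is an $r$-ultrafilter. As $S$ is Delone in a proper space, every $B$ in the nonprincipal ultrafilter $\sigma$ is infinite, hence unbounded, so $B r B$ and then $X r B$ by monotonicity; thus $X\in\Psi(\sigma)$ (axiom~3). For the prime property (axiom~2), if neither $A$ nor $A'$ lies in $\Psi(\sigma)$, pick $B_1,B_2\in\sigma$ with $A\bar r B_1$ and $A'\bar r B_2$; then $B=B_1\cap B_2\in\sigma$ satisfies $A\bar r B$ and $A'\bar r B$, whence $(A\cup A')\bar r B$ by axiom~4, so $A\cup A'\notin\Psi(\sigma)$. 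The delicate axiom is pairwise $r$-relatedness (axiom~1): given $A,A'\in\Psi(\sigma)$ and assuming $A\bar r A'$, axiom~5 of Definition~\ref{defn:closerelation} yields $C\cup D=X$ with $C\bar r A$ and $D\bar r A'$; the ultrafilter $\sigma$ contains exactly one of $C\cap S$, $D\cap S$, and membership of $A$ or $A'$ in $\Psi(\sigma)$ then forces $A r C$ or $A' r D$ by monotonicity, a contradiction. I also record that $\sigma\s\Psi(\sigma)$, using that the elements of $\sigma$ are pairwise $r$-related.

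Well-definedness and injectivity come next. For injectivity, if $\Psi(\sigma)\lambda_r\Psi(\sigma')$ then, because $\sigma\s\Psi(\sigma)$ and $\sigma'\s\Psi(\sigma')$, every $A\in\sigma$ and $B\in\sigma'$ satisfy $A r B$, i.e.\ $\sigma\lambda_r\sigma'$. For well-definedness I would prove the stronger statement that $\sigma\lambda_r\sigma'$ forces $\Psi(\sigma)=\Psi(\sigma')$. The key tool is that, by Lemma~\ref{lem:finerthanclose}, a bounded set is $\bar r$ to every subset, so $r$ is insensitive to modifying a set by a bounded amount. Given $A\in\Psi(\sigma)$ and any $B''\in\sigma'$, if $A\bar r B''$ then axiom~5 produces $C\cup D=X$ with $C\bar r A$, $D\bar r B''$, and one checks that $A\ohne D$ and $B''\ohne C$ are bounded; then for every $B'\in\sigma$ the relation $B' r B''$ (from $\sigma\lambda_r\sigma'$) transfers to $B' r C$, so $C\in\Psi(\sigma)$, contradicting $C\bar r A$ through the pairwise relatedness just established. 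Hence $A\in\Psi(\sigma')$, so $\Psi(\sigma)=\Psi(\sigma')$ and $\Phi$ is well defined.

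The main obstacle is surjectivity. Given $\sheaff\in\delta_r X$, I would reduce the problem, via the same partition-of-$S$ argument, to producing a nonprincipal ultrafilter $\sigma$ on $S$ with $A r B$ for all $A\in\sheaff$ and $B\in\sigma$; such a $\sigma$ then satisfies $\Psi(\sigma)\lambda_r\sheaff$ and maps to $[\sheaff]$. This amounts to showing that the upward-closed family $\{B\s S:A r B\ \forall A\in\sheaff\}$ contains a nonprincipal ultrafilter, equivalently that the sets $\{S\ohne B:\exists A\in\sheaff,\ A\bar r B\}$ have the finite intersection property with unbounded intersections. I expect this to be the hard step: it requires combining axiom~5 (to replace each witness $A\bar r B$ by a splitting $C\cup D=X$ with $C\in\sheaff$) with the coarse density of the Delone set $S$ to see that finitely many such $B$ cannot coarsely cover $S$ away from $\sheaff$; this parallels the construction in the proof of \cite[Theorem~23, Theorem~26]{Hartmann2019a} with $r$ in place of $\close$. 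Finally, continuity of $\Phi$ is verified by lifting to $\hat S\to\delta_r X$, $\sigma\mapsto\Psi(\sigma)$, and noting that the preimage of a basic set $\closedop A^c$ is $\{\sigma:\exists B\in\sigma,\ A\bar r B\}$, which is open since both topologies are generated from the single relation $r$ through the operator $\closedop{\cdot}$. With $\partial_r X$ compact and $\partial_r' X$ Hausdorff, the continuous bijection $\Phi$ is then a homeomorphism.
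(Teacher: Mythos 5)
Your overall strategy is sound and close in spirit to the paper's: the paper verifies that $\Phi$ is a closed continuous bijection directly, while you trade the closedness check for the observation that a continuous bijection from a compact space to a Hausdorff space is a homeomorphism. That shortcut is legitimate, but it quietly requires knowing that $\hat S$ with its $r$-topology is compact; this follows because the $r$-closure operator is coarser than the Stone topology on $\beta S\setminus S$ (if $\sheaff$ lies in the Stone closure of $\pi$ and $A\in\sheaff$, $B\in\bigcap_{\sigma\in\pi}\sigma$, then some $\sigma\in\pi$ contains $A\cap B$, which is unbounded, so $A\,r\,B$), and you should record that. Your verifications that $\Psi(\sigma)$ is an $r$-ultrafilter, and your well-definedness and injectivity arguments, are correct and in fact more detailed than the paper, which delegates these to analogues of results in \cite{Hartmann2019a}; your well-definedness argument even proves the stronger equality $\Psi(\sigma)=\Psi(\sigma')$.

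There are, however, two places where the proof is not yet a proof. First, surjectivity: you explicitly defer it ("I expect this to be the hard step"). The paper dispatches it by citing \cite[Lemma~5.3]{Naimpally1970} to extract an ultrafilter $\sigma\s\sheaff$ with $S\in\sigma$. Your finite-intersection-property route does work and you should carry it out: if $A_i\bar r B_i$ with $A_i\in\sheaff$ then $B_i\not\in\sheaff$ (elements of $\sheaff$ are pairwise related), so $B_1\cup\cdots\cup B_n\not\in\sheaff$ by primeness, hence $S\setminus\bigcup_iB_i\in\sheaff$ and is unbounded; no appeal to axiom~5 or to coarse density is actually needed here. Second, and more seriously, continuity: the claim that $\{\sigma:\exists B\in\sigma,\ A\bar r B\}$ is open "since both topologies are generated from the single relation $r$" does not engage with how the topology on $\hat S$ is actually defined, namely by the Kuratowski closure operator $\bar\pi=\{\sheaff:\{\sheaff\}\,r\,\pi\}$ on arbitrary subsets of $\hat S$. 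To show $\{\sigma: A\in\Psi(\sigma)\}$ is closed for $A\s X$ one needs (i) to replace $A$ by $E[A]\cap S$ for a suitable entourage $E$, using axiom~3 and the Delone property of $S$, and (ii) an ultrafilter-extension argument (the paper's appeal to \cite[Lemma~5.7]{Naimpally1970}) showing that $\closedop{A'}$ is closed in $\partial_r X$ for $A'\s S$ and that these sets form a base. As written, the continuity step is a genuine gap.
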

\begin{proof}
Let  $S\s X$ be a Delone subset. If $\sigma$ is an ultrafilter on $S$ 
then the collection $\{A\s X:A r B \forall B\in \sigma\}$ is an $r$-ultrafilter 
on $X$ by a proof similar to that of \cite[Theorem~17]{Hartmann2019a}. If 
$\sigma,\tau$ are two ultrafilters on $S$ with $\sigma \lambda_r \tau$ then 
$\sigma,\tau$ are in particular $r$-ultrafilters on $S$ and 
$\sigma\lambda_r(\Phi(\sigma))|_S, \tau\lambda_r(\Phi(\tau))|_S$. By 
transitivity of $\lambda_r$ we obtain 
$(\Phi(\sigma))|_S\lambda_r(\Phi(\tau))|_S$. This implies 
$\Phi(\sigma)\lambda_r\Phi(\tau)$ Thus $\Phi$ is well defined.   
 
 Now we show $\Phi$ is injective: Let $\sigma,\tau$ be nonprincipal 
ultrafilters on $S$ with $\Phi(\tau)\lambda_r\Phi(\sigma)$. Then 
$\sigma\lambda_r\tau$ since $\tau\s \Phi(\tau),\sigma\s\Phi(\sigma)$.

Now we show $\Phi$ is surjective: Let $\sheaff$ be an $r$-ultrafilter on $X$. 
Without loss of generality assume $S\in \sheaff$. Then by 
\cite[Lemma~5.3]{Naimpally1970} there exists an ultrafilter $\sigma$ on $X$ 
such that $S\in \sigma$ and $\sigma\s \sheaff$. Then $\sigma|_S$ is mapped by 
$\Phi$ to the class of $\sheaff$. 

If $A\s S$ is a subset we show $\closedop A:=\{[\sigma]:A\in\sigma\}$ is a 
closed subset of $\partial_r X$. Let $\sigma\in \hat S$ be an element with 
$\sigma r \closedop A$. Then for every $B\in \sigma$ we obtain $Br A$. Thus 
$A\in\Phi(\sigma)$. By \cite[Lemma~5.7]{Naimpally1970} there exists an 
ultrafilter $\tau$ on $S$ with $\tau\s \Phi(\sigma)$ and $A\in\tau$. This 
implies $\tau\lambda_r \sigma$. Thus $[\sigma]\in \closedop A$. In fact the 
$(\closedop A^c)_{A\s S}$ constitute a base for the topology on 
$\partial_r(X)$.

If $A\s S$ is a subset then $\Phi(\closedop A)=\closedop A$. Thus $\Phi$ is a 
closed map.

If $A\s X$ then there exists an entourage $E\s X^2$ such that $E[A]\s S$ and 
$A,E[A]$ are finite Hausdorff distance apart. Then $\iip \Phi {\closedop 
A}=\closedop {E[A]}$. Thus $\Phi$ is continuous.
\end{proof}

\begin{prop}
\label{prop:quotientclosefiner}
If $r,s$ are two large-scale proximity relations on a proper metric space $X$ and $s$ is finer than $r$ then there is a quotient map 
\[
\partial_r(X)\to \partial_s(X).
\]
\end{prop}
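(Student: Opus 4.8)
The plan is to realise both boundaries on the \emph{same} underlying set. Fix once and for all a Delone subset $S\s X$ and let $\hat S$ be the set of nonprincipal ultrafilters on $S$, so that $\partial_r(X)=\hat S/\lambda_r$ and $\partial_s(X)=\hat S/\lambda_s$; the independence of the boundary from the choice of Delone set, established above, lets me use one and the same $S$ for both relations. Recall that $s$ being \emph{finer} than $r$ means $A\,r\,B\Rightarrow A\,s\,B$ for all $A,B\s X$, in the reading used in Lemma~\ref{lem:finerthanclose}. The idea is that passing to the finer relation $s$ can only coarsen both the equivalence relation and the topology on $\hat S$, in a mutually compatible way, so that the identity of $\hat S$ descends to the desired quotient map.

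First I would compare the equivalence relations. From the definition $\sheaff\lambda_r\sheafg$ iff $A\in\sheaff,B\in\sheafg$ implies $A\,r\,B$, together with the implication $A\,r\,B\Rightarrow A\,s\,B$, it follows at once that $\sheaff\lambda_r\sheafg\Rightarrow\sheaff\lambda_s\sheafg$, that is $\lambda_r\subseteq\lambda_s$. Hence every $\lambda_r$-class is contained in a single $\lambda_s$-class, and the identity on $\hat S$ descends to a well-defined surjection
\[
q:\partial_r(X)\to\partial_s(X),\qquad [\sheaff]_{\lambda_r}\mapsto[\sheaff]_{\lambda_s},
\]
surjectivity being inherited from the surjectivity of the quotient map $\hat S\to\hat S/\lambda_s$.

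The point that needs care, and what I expect to be the main (if mild) obstacle, is that the topology on $\hat S$ is itself built from the proximity relation through the Kuratowski closure $\bar\pi=\{\sheaff:\{\sheaff\}\,r\,\pi\}$, so the two boundaries a priori carry \emph{different} topologies $\tau_r$ and $\tau_s$ on $\hat S$. Here I would observe that $A\,r\,B\Rightarrow A\,s\,B$ forces $\{\sheaff\}\,r\,\pi\Rightarrow\{\sheaff\}\,s\,\pi$, so the $s$-closure of every $\pi$ contains its $r$-closure; consequently every $\tau_s$-closed set is $\tau_r$-closed, $\tau_s$ is coarser than $\tau_r$, and the identity $(\hat S,\tau_r)\to(\hat S,\tau_s)$ is continuous.

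To conclude, I factor the continuous composite $\pi_s\circ\mathrm{id}:(\hat S,\tau_r)\to\partial_s(X)$, which sends $\sheaff\mapsto[\sheaff]_{\lambda_s}$, through the quotient map $\pi_r:(\hat S,\tau_r)\to\partial_r(X)$; this is legitimate precisely because the composite is constant on $\lambda_r$-classes, by $\lambda_r\subseteq\lambda_s$. By the universal property of the quotient topology the induced map $q$ is continuous. Finally, since $\partial_r(X)$ is compact and $\partial_s(X)$ is Hausdorff, both as shown above, the continuous surjection $q$ is automatically closed and therefore a quotient map, which is the assertion.
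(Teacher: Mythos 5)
Your argument is correct and follows essentially the same route as the paper: realise both boundaries as quotients of the same $\hat S$, note $\lambda_r\subseteq\lambda_s$ and that the identity $(\hat S,\tau_r)\to(\hat S,\tau_s)$ is continuous, then descend to the quotients. The only (harmless) deviation is at the end: the paper gets closedness directly from the fact that the identity sends each basic closed set $\closedop A$ to $\closedop A$, whereas you invoke compactness of $\partial_r(X)$ and Hausdorffness of $\partial_s(X)$, both of which are indeed available at this point.
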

\begin{proof}
Let $S\s X$ be a Delone subset.

If $\sheaff,\sheafg$ are nonprincipal ultrafilters on $S$ then $\sheaff\lambda_r\sheafg$ implies ($A\in \sheaff,B\in\sheafg$ implies $A r B$). Thus $A s B$ for every $A\in \sheaff, B\in \sheafg$ which implies $\sheaff\lambda_s \sheafg$.

Now we show $id_{\hat S}:(\hat S,r)\to (\hat S,s)$ is continuous. If $\pi_1,\pi_2\s \hat S$ are subsets then $\pi_1 r \pi_2$ implies ($A,B\s S$ with $\pi_1\s \closedop A, \pi_2\s \closedop B$ implies $A r B$). Then $A s B$ if $\pi_1\s \closedop A, \pi_2\s \closedop B$. Thus $\pi_1 s \pi_2$.

Since $id_{\hat S}$ is surjective the induced map on quotients is surjective.

Since for every subset $A\s S$ the map $id_{\hat S}$ maps $\closedop A$ to $\closedop A$ the induced map on quotients is closed.
\end{proof}

Now we produce the compactification of a proper metric space $X$ with the boundary $\partial_r X$ given a proximity relation $r$. Define $\bar X^r=X\sqcup \partial_r X$ as a set. Closed sets on $\bar X^r$ are generated by $(\bar A\cup \closedop A)_{A\s X}$, where the closure $\bar A$ of $A$ is taken in $X$.

\begin{prop}
If $X$ is a proper metric space and $r$ a large-scale proximity relation on $X$ then $\bar X^r$ is a compactification of $X$ with boundary $\partial_r X$.
\end{prop}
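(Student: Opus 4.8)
The plan is to verify the four defining features of a compactification: that $X\hookrightarrow \bar X^r$ is a topological embedding, that $X$ is open, that $X$ is dense, and that $\bar X^r$ is compact Hausdorff. Since $\partial_r X$ is already known to be compact Hausdorff and homeomorphic to $\partial_r' X$, the boundary will then carry the correct topology automatically. The engine driving everything is a single closure formula: I claim the closure of any $A\s X$ taken in $\bar X^r$ is $\bar A\cup \closedop A$, where $\bar A$ is the closure in $X$. To prove it I would first record the formal properties of $A\mapsto\closedop A$, reading $\closedop A$ through the entourage $E$ with $E[A]\s S$ from the previous proposition. The ultrafilter description gives additivity $\closedop{A\cup B}=\closedop A\cup\closedop B$ and monotonicity; axiom 3 of Definition~\ref{defn:closerelation} applied to the entourage $\{(x,y):d(x,y)\le 1\}$ gives $\closedop{\bar A}=\closedop A$; and axioms 1 and 4 force $\closedop A=\emptyset$ for bounded $A$, since a bounded set meets the $R$-discrete Delone set $S$ finitely and so lies in no nonprincipal ultrafilter. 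Using $\bar A\cup\bar B=\overline{A\cup B}$ with additivity, a finite union of generators is again a generator, so every closed set is an intersection of generators; and if a generator $\bar B\cup\closedop B$ contains $A\s X$ then $A\s \bar B$, whence $\bar A\s\bar B$ and $\closedop A\s\closedop{\bar B}=\closedop B$. Thus $\bar A\cup\closedop A$ is the smallest generator containing $A$, and the closure formula follows.

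From the formula the first three features drop out. Intersecting a generator with $X$ returns $\bar A$ and with $\partial_r X$ returns $\closedop A$, so the subspace topology on $X$ is the metric topology and, via the base $(\closedop A^c)_A$ from the previous proposition, the subspace topology on $\partial_r X$ is its given topology; hence the inclusion is an embedding. Since every nonprincipal ultrafilter on $S$ contains $S$ we have $\closedop X=\partial_r X$, so the closure of $X$ is all of $\bar X^r$ and $X$ is dense. For openness I would use the balls: fixing a basepoint $x_0$, the set $A_n=\{x:d(x_0,x)\ge n\}$ satisfies $\closedop{A_n}=\closedop X=\partial_r X$ because $X\ohne A_n$ is bounded, so the complement of the generator $\overline{A_n}\cup\partial_r X$ is the open ball $\{x:d(x_0,x)<n\}\s X$, and these balls exhaust $X$.

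For compactness I would reduce to the known compactness of $\partial_r X$ together with properness of $X$. Given an open cover, finitely many members $U_1,\dots,U_k$ already cover $\partial_r X$, and their union omits a closed set $C\s X$. By the closure formula $C=\bar C\cup\closedop C$, and $C\s X$ forces $\closedop C=\emptyset$; but if $C$ were unbounded its image in $S$ would be an infinite set contained in some nonprincipal ultrafilter, giving $\closedop C\ne\emptyset$, a contradiction, so $C$ is bounded and hence compact in the proper space $X$. Finitely many further members then cover $C$. Hausdorffness I would check in three cases: two points of $X$ are separated by metric balls; a point $x\in X$ and a boundary point are separated by a small ball $B(x,\varepsilon)$, whose $\bar X^r$-closure is $\overline{B(x,\varepsilon)}\s X$ by the formula, and its complement; and two boundary points $[\sigma],[\tau]$ are separated using axiom 5, which yields $C\cup D=X$ with $C\bar r A$, $D\bar r B$ for suitable $A\in\sigma$, $B\in\tau$, so that the basic open sets $(X\ohne\bar C)\cup(\partial_r X\ohne\closedop C)$ and $(X\ohne\bar D)\cup(\partial_r X\ohne\closedop D)$ are disjoint neighborhoods.

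The main obstacle is the closure formula, and inside it the bookkeeping that promotes the boundary-level operator $\closedop$, defined on $\hat S$ through the proximity $r$, to a well-behaved operator on arbitrary subsets of $X$; once its additivity, monotonicity, invariance under $X$-closure, and vanishing on bounded sets are secured, compactness is the only genuinely global claim, and it is handled by the reduction to $\partial_r X$ and properness above rather than by a direct subcover argument.
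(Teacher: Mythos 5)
Your argument is correct, but it takes a genuinely different and far more self-contained route than the paper. The paper's proof is essentially a citation: compactness of $\bar X^r$ is imported from the first part of the proof of \cite[Theorem~20]{Hartmann2019c} with $\close$ replaced by $r$, the subspace claims for $X$ and $\partial_r X$ are merely asserted, density is noted via $X\cup\closedop X=\bar X^r$, and neither openness of the embedding nor the Hausdorff property of $\bar X^r$ is addressed there at all. You instead derive everything from the single closure formula $\overline{A}^{\,\bar X^r}=\bar A\cup\closedop A$, obtained from additivity, monotonicity, invariance under closure in $X$, and vanishing on bounded sets of the operator $A\mapsto\closedop A$. This buys you an internal compactness proof by reduction to the already-established compactness of $\partial_r'X\cong\partial_r X$ together with properness of $X$ (the leftover closed set lies in $X$, has empty trace on the boundary, hence is bounded and compact), plus explicit verifications that $X$ is open (exhaustion by balls whose complements are generators) and that $\bar X^r$ is Hausdorff, where your boundary--boundary case correctly reuses the axiom-5 separation the paper gave for $\partial_r'X$. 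The one point at which you are no more rigorous than the paper is the well-definedness of $\closedop A$ for arbitrary $A\s X$, i.e.\ independence of the choice of entourage pushing $A$ into the Delone set $S$; you rightly flag this as the main bookkeeping burden, and since the paper's homeomorphism proposition relies on the same fact equally implicitly, this is not a gap relative to the paper's own standard.
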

\begin{proof}
. This topology is compact by the first part of the proof of \cite[Theorem~20]{Hartmann2019c} with $\close$ replaced by $r$. The spaces $\partial_r X,X$ appear as subspaces of $\bar X^r$. The inclusion $X\to \bar X^r$ is dense since $X\cup \closedop X=\bar X^r$.
\end{proof}

\begin{rem}
\label{rem:uniquequotient}
The statement in Proposition~\ref{prop:quotientclosefiner} can be 
strengthened: If $X$ is a proper metric space and $r,s$ are close relations on 
$X$ with $s$ finer than $r$ then there is a quotient map $\bar X^r\to \bar X^s$ which is the unique continuous map extending the identity on $X$.
\end{rem}
\begin{proof}
Assume without loss of generality that $X$ is $R$-discrete for some $R>0$. A net in $X$ converging to a point $p\in \bar X$ can be written as a filter $\sheaff$ on $X$. Then an ultrafilter $\sigma$ finer than $\sheaff$ converges to the same point.

Let $\alpha:\bar X^r\to \bar X^s$ be a continuous map extending the identity on 
$X$. Then $\alpha_*\sigma=\sigma$. Thus $\sigma$ converges to $\alpha(p)$. Now $p$ is represented by $\sigma$ in $\partial_r X$ and $\alpha(p)$ is also represented by $\sigma$ in $\partial_s X$. Thus $\alpha$ maps a point represented by $\sigma$ to a point represented by $\sigma$ and is thus uniquely determined. This implies $\alpha|_{\partial_r X}$ is the quotient map of Theorem~\ref{prop:quotientclosefiner}.
\end{proof}

\begin{rem}
Remark~\ref{rem:uniquequotient} and Lemma~\ref{lem:finerthanclose} imply that the Higson compactification is universal among coarse compactifications. This recovers \cite[Proposition~2.39]{Roe2003}.
\end{rem}

If $\bar X$ is a coarse compactification of a proper metric space $X$, define a relation $r_{\bar X}$ on subsets of $X$ as follows: For a subset $A\s X$ define $\closedop A=\bar A\cap \partial X$. If $A,B\s X$ are subsets then $Ar_{\bar X} B$ if $\closedop A\cap \closedop B\not=\emptyset$.

\begin{thm}
\label{thm:coarsetorelation}
Let $\bar X$ be a coarse compactification of a proper metric space $X$. Then $r_{\bar X}$ is a large-scale proximity relation on $X$. There is a homeomorphism $\Phi:\bar X\to \bar X^r$ extending the identity on $X$.
\end{thm}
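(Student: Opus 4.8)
The plan has two parts: first I would verify the five axioms of Definition~\ref{defn:closerelation} for $r_{\bar X}$, and then realise $\bar X$ as $\bar X^{r}$, with $r=r_{\bar X}$, through an explicit homeomorphism. Throughout, recall that $A\,r_{\bar X}\,B$ means $(\overline A^{\bar X}\cap\partial X)\cap(\overline B^{\bar X}\cap\partial X)\not=\emptyset$, where $\overline{\,\cdot\,}^{\bar X}$ denotes closure in $\bar X$ and $\overline{\,\cdot\,}^{X}$ closure in $X$.

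\emph{Checking the axioms.} Axiom 2 is immediate by symmetry, and axiom 4 follows from $\overline{A\cup B}^{\bar X}=\overline A^{\bar X}\cup\overline B^{\bar X}$ together with distributivity of intersection over union. For axiom 1, observe that $B$ fails $B\,r_{\bar X}\,B$ exactly when $\overline B^{\bar X}\s X$; since $\overline B^{\bar X}$ is closed in the compact space $\bar X$ it is then a compact subset of $X$, hence bounded, and conversely properness of $X$ forces a bounded $B$ to have compact $\overline B^{X}$, which is closed in $\bar X$ and misses $\partial X$. Axiom 3 is the place where the coarse hypothesis enters: given $E[A]\z A'$ and $E[A']\z A$, I would take a net $(a_i)\s A$ with $a_i\to p\in\partial X$, choose $a_i'\in A'$ with $(a_i,a_i')\in E$, and apply the defining property of a coarse compactification to the entourage-net $(a_i,a_i')$ to conclude that $p$ is also a limit point of $(a_i')$, so $p\in\overline{A'}^{\bar X}\cap\partial X$; this yields $\overline A^{\bar X}\cap\partial X=\overline{A'}^{\bar X}\cap\partial X$ and hence the claim. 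Axiom 5 is a separation statement: $\overline A^{\bar X}\cap\partial X$ and $\overline B^{\bar X}\cap\partial X$ are disjoint closed subsets of the compact Hausdorff, hence normal, space $\bar X$, so they admit disjoint open neighbourhoods $U,V$, and then $C:=X\setminus U$ and $D:=X\setminus V$ satisfy $C\cup D=X$ with the required separations.

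\emph{Constructing the homeomorphism.} As in Remark~\ref{rem:uniquequotient} I would assume $X$ is $R$-discrete, so that $\partial_r X=\hat X/\lambda_r$ and boundary points are classes of nonprincipal ultrafilters on $X$. Define $\Psi:\bar X^{r}\to\bar X$ to be the identity on $X$ and to send $[\sigma]$ to $\lim\sigma$, the unique limit in the compact Hausdorff space $\bar X$ of $\sigma$ regarded as a filter on $X$; the goal is to show that $\Phi:=\Psi^{-1}$ is the asserted homeomorphism. Well-definedness amounts to the implication $\sigma\lambda_r\tau\Rightarrow\lim\sigma=\lim\tau$: were the limits distinct, regularity of $\bar X$ would separate them by open sets $U,V$ with disjoint closures, so that $A:=U\cap X\in\sigma$ and $B:=V\cap X\in\tau$ would fail $A\,r_{\bar X}\,B$, contradicting $\sigma\lambda_r\tau$; injectivity is the same computation run backwards, since distinct classes supply $A\in\sigma,B\in\tau$ with $\overline A^{\bar X}\cap\overline B^{\bar X}\cap\partial X=\emptyset$ while $\lim\sigma\in\overline A^{\bar X}\cap\partial X$ and $\lim\tau\in\overline B^{\bar X}\cap\partial X$. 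Surjectivity onto $\partial X$ is easy in the $R$-discrete reduction: for $p\in\partial X$ the traces $U\cap X$ of neighbourhoods of $p$ have the finite intersection property (as $X$ is dense), so they extend to a nonprincipal ultrafilter $\sigma$ with $\lim\sigma=p$.

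It remains to prove $\Phi$ continuous, and here the decisive computation is the identity $\Psi(\closedop A)=\overline A^{\bar X}\cap\partial X$ for every $A\s X$; its nontrivial inclusion requires, for given $p\in\overline A^{\bar X}\cap\partial X$, the construction of an ultrafilter containing $A$ and converging to $p$, exactly as in the surjectivity step but with $A$ adjoined to the generating filter. Granting this, $\Psi$ sends the basic closed set $\overline A^{X}\cup\closedop A$ onto the closed set $\overline A^{\bar X}$; as these basic sets form a base for the closed sets of $\bar X^{r}$ and $\Psi$ is a bijection, $\Psi$ maps every closed set to a closed set, which is to say $\Phi^{-1}(C)=\Psi(C)$ is closed for each closed $C$, so $\Phi$ is continuous. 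A continuous bijection from the compact space $\bar X$ to the Hausdorff space $\bar X^{r}$ is a homeomorphism, and by construction it restricts to the identity on $X$. I expect the main obstacle to be precisely this last paragraph: keeping the two closures (in $X$ and in $\bar X$) straight through $\Psi$, and producing for each boundary point an ultrafilter that simultaneously contains a prescribed set and converges to that point — this is where the interplay between the abstract relation $r_{\bar X}$ and the topology of $\bar X$ is most delicate.
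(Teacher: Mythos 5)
Your proof is correct, but it is organised differently from the paper's. For the five axioms the paper verifies only axiom~3 directly (by the same same-limit-points argument you give) and imports axioms~1,\,2,\,4,\,5 from the coarse proximity literature (\cite[Theorem~6.7]{Grzegrzolka2018a}); you instead prove all five from scratch, and your arguments are the right ones --- compactness of $\bar B^{\bar X}$ plus properness for axiom~1, and normality of the compact Hausdorff space $\bar X$ for axiom~5. For the homeomorphism the paper works with the first model $\partial_r'X$ of the boundary, sending $x\in\partial X$ to the $r$-ultrafilter $\{A\s X: x\in \bar A\cap\partial X\}$, again citing the external reference for bijectivity of the boundary map and only checking continuity of $\Phi$ on basic closed sets; you go the other way, building the inverse $\Psi$ on the ultrafilter model $\hat S/\lambda_r$ via $[\sigma]\mapsto\lim\sigma$ and deducing everything (well-definedness, injectivity, surjectivity, closedness) from convergence of ultrafilters and separation in $\bar X$, closing with the compact-to-Hausdorff argument. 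Your route is more self-contained and makes explicit where normality of $\bar X$ enters; the paper's is shorter and, by using $r$-ultrafilters on all of $X$, avoids the one point in your write-up that deserves an extra sentence: the reduction ``assume $X$ is $R$-discrete'' replaces $X$ by a Delone subset $S$ and hence replaces both $\bar X$ and $\bar X^r$, so you should note that $\overline{S}^{\bar X}\supseteq\partial X$ (which follows from coarse density of $S$ together with coarseness of the compactification) before gluing the boundary homeomorphism back to the identity on $X$. The paper performs the same reduction in Remark~\ref{rem:uniquequotient}, so this is consistent with its level of detail and is not a gap.
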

\begin{proof}
      By \cite[Theorem~6.7]{Grzegrzolka2018a} the relation $r$ is a coarse 
proximity relation. Thus axioms 1,2,4,5 of a large-scale proximity relation are satisfied. It remains to show $r$ satisfies axiom 3. Let $A,A',B\s X$ be subsets and let $E\s X\times X$ be an entourage with $E[A]\z A',E[A']\z A$ and $Ar B$. Then 
$\closedop A\cap \closedop B\not=\emptyset$. Since the compactification is 
coarse $A,A'$ have the same limit points on $\partial X$. Thus $\closedop 
A=\closedop {A'}$ which implies $\closedop {A'}\cap \closedop B\not=\emptyset$. 
Thus $A'r B$.

For the last statement we extend the proof of \cite[Theorem~6.7]{Grzegrzolka2018a}. If $x\in \bar X\setminus X$ is a point define $\sheaff_x:=\{A\s  X: x\in \closedop A\}$. Then we define
\begin{align*}
\Phi:\bar X&\to \bar X^r\\
x&\mapsto \begin{cases}
           x & x\in X\\
           [\sheaff_x] & x\in \bar X\setminus X
          \end{cases}
\end{align*}
Now \cite[Theorem~6.7]{Grzegrzolka2018a} showed $\Phi|_{\bar X\setminus X}$ is 
a homeomorphism. This implies in particular that $\Phi$ is a bijective map. We 
show $\Phi$ is continuous: Let $A\s X$ be a subset. Then
\f{
\iip \Phi {\bar A^X\cup \closedop A}
&=\bar A^X\cup \{x\in \bar X\setminus X:\sheaff_x\in\closedop A\}\\
&=\bar A^X\cup \{x\in\bar X\setminus X: A\in \sheaff_x\}\\
&=\bar A^X\cup \closedop A
}
is a closed set. Here $\bar A^X$ denotes the closure of $A$ in $X$.
\end{proof}

\section{Bounded functions}

\begin{thm}
\label{thm:coarsefunctions}
Let $X$ be a proper metric space. A compactification $\bar X$ is coarse if and 
only if every bounded continuous function $\varphi:X\to \R$ that extends to 
$\bar X$ is Higson.
\end{thm}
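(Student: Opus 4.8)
The plan is to prove both implications by reducing coarseness to the criterion of Theorem~\ref{thm:roevsclose}---that $\bar X$ is coarse iff $A\close B$ forces $\bar A\cap\bar B\neq\emptyset$---and to its reformulation through the topological coarse structure, namely $\bar E\cap(\partial X\times\bar X\cup\bar X\times\partial X)\s\Delta_{\partial X}$ for every entourage $E\s X^2$.

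For the forward direction I would assume $\bar X$ is coarse and take a bounded continuous $\varphi$ with continuous extension $\bar\varphi\colon\bar X\to\R$. Fixing an entourage $E\s X^2$, I argue by contradiction: if $d\varphi|_E$ fails to vanish at infinity there are $\varepsilon>0$ and a sequence $(x_i,y_i)_i$ in $E$ leaving every bounded set with $|\varphi(x_i)-\varphi(y_i)|\ge\varepsilon$. Since $d(x_i,y_i)$ is bounded on $E$, both coordinate sequences run off to infinity, and by compactness of $\bar X\times\bar X$ I pass to a subnet converging to some $(p,q)\in\bar E$. Properness of $X$ (bounded sets have compact closure, so a net leaving every bounded set cannot cluster in $X$) forces $p,q\in\partial X$, whence the coarse structure condition gives $(p,q)\in\Delta_{\partial X}$, i.e.\ $p=q$. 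Continuity of $\bar\varphi$ then yields $\varphi(x_i)-\varphi(y_i)\to 0$ along the subnet, contradicting the lower bound $\varepsilon$; hence $\varphi$ is Higson.

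For the converse I would prove the contrapositive via Theorem~\ref{thm:roevsclose}. If $\bar X$ is not coarse there are $A,B\s X$ with $A\close B$ yet $\bar A\cap\bar B=\emptyset$. As $\bar X$ is compact Hausdorff, hence normal, Urysohn's lemma supplies a continuous $\bar\varphi\colon\bar X\to[0,1]$ that is $0$ on $\bar A$ and $1$ on $\bar B$; its restriction $\varphi$ is a bounded continuous function extending to $\bar X$, so by hypothesis it is Higson. But a witnessing sequence $(a_i,b_i)_i\s A\times B$ for $A\close B$ spans an entourage $E$ that is unbounded in $X^2$, and on it $d\varphi\equiv\varphi(a_i)-\varphi(b_i)=-1$, so $d\varphi|_E$ cannot vanish at infinity---contradicting that $\varphi$ is Higson. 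Therefore $\bar A\cap\bar B\neq\emptyset$ and $\bar X$ is coarse.

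I expect the only delicate step to be locating the limit points $p,q$ on the boundary and confirming $(p,q)\in\bar E$ in the forward direction; this is precisely where properness and the need for subnets (rather than subsequences, since $\bar X$ need not be metrizable) enter. The converse is essentially mechanical once one sees that a function separating the disjoint boundary closures instantly violates the Higson condition on the close-witnessing entourage.
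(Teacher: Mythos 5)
Your proposal is correct and follows essentially the same route as the paper: the forward direction extracts an unbounded sequence in an entourage violating the Higson condition and derives a contradiction from the shared boundary limit point (the paper phrases this via Theorem~\ref{thm:roevsclose} rather than the $\Delta_{\partial X}$ condition, but the argument is the same), and the converse is the identical Urysohn's-lemma separation argument combined with Theorem~\ref{thm:roevsclose}. If anything, your handling of the forward direction---passing to a convergent subnet of the pairs and using continuity of $\bar\varphi$ at the common limit---is slightly more careful than the paper's appeal to ``disjoint limit points.''
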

\begin{proof}
Suppose $\bar X$ is a coarse compactification and assume for contradiction there is a continuous function $\varphi:\bar X\to \R$ such that $\varphi|_X$ is not Higson. Then there is an entourage $E\s X^2$ and some $\varepsilon>0$ and an unbounded sequence $(x_k, y_k)_k\s E$ with 
\[
|\varphi(x_k)-\varphi(y_k)|>\varepsilon
\]
Now $\overline{(x_k)_k}\cap \overline{(y_k)_k}\not=\emptyset$ since the compactification is coarse. This contradicts that $\varphi(x_k)_k, \varphi(y_k)_k$ have disjoint limit points in $\R$.

Now we give an alternative proof of this direction using that the Higson corona is universal among coarse compactifications. Suppose $\varphi:X\to \R$ is a bounded continuous function that extends a continuous function $\bar \varphi$ on $\bar X$. Denote by $q:hX\to \bar X$ the quotient map from the Higson corona. Then $\bar \varphi\circ q:hX\to \R$ is an extension of $\varphi$ to $hX$. This implies $\varphi$ is Higson.

Now suppose every bounded continuous function $\varphi:X\to \R$ that extends to $\bar X$ is Higson. Let $A,B\s X$ be subsets such that $\bar A\cap \bar B=\emptyset$. Since $\bar X$ is normal we can use Urysohn's lemma: there exists a bounded continuous function $\varphi:\bar X\to \R$ with $\varphi|_{\bar A}\equiv 0$ and $\varphi|_{\bar B}\equiv 1$. Now $\varphi|_X$ is Higson. this implies for every entourage $E\s X\times X$ there exists a bounded set $C\s X$ with
\[
      E\cap (A\times B)\s C\times C.
\]
This implies $A\notclose B$.
\end{proof}

Let $X$ be a proper metric space. To a coarse compactification $\bar X$ we can associate a large-scale proximity relation $r$ on subsets of $X$ such that $\bar X=\bar X^r$. We can also associate to $\bar X$ the set of bounded functions $C_r(X)$ that extend to $\bar X^r$. They must be Higson. Note that $C_r(X)$ is a ring by pointwise addition and multiplication.

\section{Functoriality}

\begin{prop}
\label{prop:contravariant}
Let $\alpha:X\to Y$ be a coarse map between proper metric spaces and let $\bar Y^r$ be a coarse compactification. If $X$ is $R$-discrete for some $R>0$ the functions $C_r(X):=\{\varphi\circ \alpha:\varphi\in C_r(Y)\}$ determine a coarse compactification on $X$. It is the same compactification which is induced by the relation $Ar B$ if $\alpha(A)r\alpha(B)$.
\end{prop}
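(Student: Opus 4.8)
The plan is to treat the two assertions separately: first that $C_r(X)$ generates a coarse compactification, and then that it coincides with the relational one.

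For the first assertion, note that since $\bar Y^r$ is coarse, Theorem~\ref{thm:coarsefunctions} guarantees every $\varphi\in C_r(Y)$ is Higson, and I would check that each pullback $\varphi\circ\alpha$ is Higson on $X$. Fix an entourage $E\subseteq X^2$. Because $\alpha$ is coarsely uniform, $F:=(\alpha\times\alpha)(E)$ is an entourage of $Y$, and $d(\varphi\circ\alpha)|_E(x,y)=d\varphi|_F(\alpha(x),\alpha(y))$. Given $\varepsilon>0$, Higsonness of $\varphi$ produces a bounded $B_Y\subseteq Y$ outside which $|d\varphi|_F|<\varepsilon$; since $\alpha$ is coarsely proper, $B_X:=\alpha^{-1}(B_Y)$ is bounded, and for $(x,y)\in E$ with $x\notin B_X$ we get $\alpha(x)\notin B_Y$, hence $|d(\varphi\circ\alpha)|_E(x,y)|<\varepsilon$, so $d(\varphi\circ\alpha)|_E$ vanishes at infinity. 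Because $X$ is $R$-discrete every function on $X$ is continuous, and because $C_r(Y)$ is a ring the collection $C_r(X)=\{\varphi\circ\alpha\}$ is an algebra of Higson functions. The algebra clause of Theorem~\ref{thm:a} then yields the coarse compactification $\bar X^{C_r(X)}$.

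For the second assertion, write $s$ for the relation $A\,s\,B:\Leftrightarrow\alpha(A)\,r\,\alpha(B)$, and let $t=r_{\bar X^{C_r(X)}}$ be the relation induced by the compactification just built; by Theorem~\ref{thm:coarsetorelation} the relation $t$ is large-scale proximity and $\bar X^{C_r(X)}$ is homeomorphic to $\bar X^t$ via the identity on $X$. It therefore suffices to prove $t=s$, for then $s$ is automatically large-scale proximity and $\bar X^s=\bar X^t\cong\bar X^{C_r(X)}$. I would establish the two implications. For $t\Rightarrow s$ I argue contrapositively: if $\alpha(A)\,\bar r\,\alpha(B)$ then the compact sets $\overline{\alpha(A)}\cap\partial_r Y$ and $\overline{\alpha(B)}\cap\partial_r Y$ are disjoint in $\bar Y^r$, so Urysohn supplies $\bar\varphi\colon\bar Y^r\to[0,1]$ equal to $0$, resp.\ $1$, on them; with $\varphi=\bar\varphi|_Y\in C_r(Y)$ the function $\varphi\circ\alpha$ lies in $C_r(X)$. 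For any boundary point $p\in\bar A\cap\partial X$ of $\bar X^{C_r(X)}$, represented by a net $a_i\in A$, coarse properness forces $\alpha(a_i)\to\infty$, so all limit points of $\alpha(a_i)$ sit in $\overline{\alpha(A)}\cap\partial_r Y$ where $\bar\varphi\equiv0$; hence the extension of $\varphi\circ\alpha$ takes value $0$ at $p$, and likewise $1$ on $\bar B\cap\partial X$, so these closures are disjoint and $A\,\bar t\,B$.

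For the reverse $s\Rightarrow t$, suppose $\alpha(A)\,r\,\alpha(B)$, witnessed by $q\in\overline{\alpha(A)}\cap\overline{\alpha(B)}\cap\partial_r Y$, and choose nets $\alpha(a_i)\to q$, $\alpha(b_j)\to q$ with $a_i\in A$, $b_j\in B$. Coarse properness again gives $a_i,b_j\to\infty$, so passing to subnets yields limit points $p_A,p_B\in\partial X$ of $\bar X^{C_r(X)}$ with $a_i\to p_A$, $b_j\to p_B$. For every generator $\varphi\circ\alpha$ the extended value at $p_A$ is $\lim\varphi(\alpha(a_i))=\bar\varphi(q)$ and at $p_B$ is $\lim\varphi(\alpha(b_j))=\bar\varphi(q)$, so all generators of $\bar X^{C_r(X)}$ agree at $p_A$ and $p_B$. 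Since the generators separate the points of a function-generated compactification, $p_A=p_B$, and this common point lies in $\bar A\cap\bar B\cap\partial X$, giving $A\,t\,B$. Combining the two implications yields $t=s$ and finishes the proof. The step I expect to be the crux is this last one: forcing the two a priori distinct boundary limit points $p_A$ and $p_B$ to coincide. It works precisely because $\bar X^{C_r(X)}$ is generated by the pulled-back functions, so points are separated only by the $\varphi\circ\alpha$, and these are all pinned to the common value $\bar\varphi(q)$; coarse properness is what guarantees the preimage nets escape to the boundary, so that such limit points exist there in the first place.
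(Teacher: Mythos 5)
Your proof is correct, and its first half (pulling back Higson functions along a coarse map) is essentially the paper's argument, only with the vanishing-at-infinity estimate written out explicitly. Where you genuinely diverge is in identifying the function-generated compactification with the relational one. The paper takes the relation $s$ (where $A\,s\,B$ iff $\alpha(A)\,r\,\alpha(B)$) as primary: it verifies all five axioms of a large-scale proximity relation for $s$ by hand, pushing entourages and covers forward and back along $\alpha$, and then constructs an explicit map $\Phi$ from $\bar X^{s}$ into $\R^{C_r(X)}$ on ultrafilter classes, checking well-definedness, injectivity, and continuity/openness through a commuting square with the evaluation embeddings. You instead take the compactification $\bar X^{C_r(X)}$ as primary, let Theorem~\ref{thm:coarsetorelation} hand you its induced large-scale proximity relation $t$ together with the homeomorphism $\bar X^{C_r(X)}\cong\bar X^{t}$ over the identity on $X$, and reduce everything to the set-theoretic identity $t=s$, proved by Urysohn separation in $\bar Y^{r}$ for the direction $t\Rightarrow s$ and by the fact that the coordinate functions of $\overline{e(X)}$ separate boundary points for the direction $s\Rightarrow t$. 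Your route buys the five axioms for free and avoids the explicit homeomorphism; the paper's route is self-contained and does not lean on Theorem~\ref{thm:coarsetorelation}. One cosmetic slip: in the direction $s\Rightarrow t$, the fact that the net $(a_i)$ escapes every bounded subset of $X$ follows from coarse \emph{uniformity} of $\alpha$ (images of bounded sets are bounded), not from coarse properness; since $\alpha$ is coarse both properties are available, so nothing breaks.
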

\begin{proof}
If $\varphi$ is a Higson function on $Y$ then $\varphi\circ \alpha$ is 
continuous and bounded since $\alpha$ is continuous and $\varphi\circ\alpha$ is continuous and bounded. Since $\alpha$ is a coarse map $\varphi\circ\alpha$ satisfies the Higson property.  Thus $C_r(X)$ determines a compactification $\bar X$ which is coarse. The set $C_r(X)$ is a ring by pointwise addition and multiplication and contains the constant functions. Thus $C_r(X)$ equals the bounded functions which can be extended to $\bar X^r$.

Now we prove the relation $r$ defined on subsets of $X$ is a large-scale proximity relation. We check the axioms of a large-scale proximity relation:
\begin{enumerate}
      \item if $B\s X$ is bounded so is $\alpha(B)\s Y$. Thus $\alpha(B)\bar r 
\alpha(B)$ which implies $B\bar r B$. If $A\s X$ is unbounded then $\alpha(A)\s 
Y$ is unbounded. Thus $\alpha(A)r \alpha(A)$ which implies $Ar A$.
    \item symmetry is obvious.
    \item Suppose $A,A',B\s X$ are subsets and $E\s X\times X$ an entourage 
with $E[A]\z A',E[A']\z A$ and $Ar B$. Then $\zzp \alpha 
E[\alpha(A)]\z\alpha(A')$ and $\zzp \alpha E[\alpha(A')]\z\alpha(A)$ and 
$\alpha(A)r\alpha(B)$. Then $\alpha(A')r\alpha(B)$. Thus $A'r B$.
    \item If $(A\cup B)r C$ then $\alpha(A\cup B)r \alpha(C)$. Now 
$\alpha(A\cup B)=\alpha(A)\cup \alpha(B)$ thus $\alpha(A)r \alpha(C)$ or 
$\alpha(B)r \alpha (C)$. This implies $Ar C$ or $Br C$.
    \item If $A\bar r B$ then $\alpha(A)\bar r \alpha(B)$. This implies there 
exist $C,D\s Y$ with $C\cup D=Y$ and $C\bar r \alpha(A),D\bar r \alpha(B)$. 
Then $\iip \alpha C\cup \iip \alpha D=X$ and $\iip \alpha C\bar r A,\iip \alpha 
D\bar r B$.
\end{enumerate}

Now we define a map
\begin{align*}
      \Phi:\bar X^r&\to \R^{C_r(X)}\\
      x&\mapsto\begin{cases}
                     (\varphi\circ \alpha(x))_{\varphi\circ \alpha} & x\in X\\
                     (x\mhyphen\lim \varphi\circ \alpha)_{\varphi\circ \alpha} 
& x\in \partial_r X.
               \end{cases}
\end{align*}
We show $\Phi$ is well-defined: If $\sheaff\lambda_r \sheafg$ then for every $A\in \sheaff,B\in \sheafg$ the relation $Ar B$ holds. Then $\alpha(A)r \alpha(B)$ which implies $\alpha_*\sheaff \lambda_r \alpha_*\sheafg$. Then
\f{
\sheaff\mhyphen\lim \varphi\circ \alpha
&=\alpha_*\sheaff\mhyphen\lim \varphi\\
&=\alpha_*\sheafg\mhyphen\lim \varphi\\
&=\sheafg\mhyphen\lim \varphi\circ \alpha
}
for every $\varphi\in C_r(Y)$.

Now we show $\Phi$ is injective: if $\sheaff\bar \lambda_r \sheafg$ on $X$ then there exist $A\in \sheaff, B\in \sheafg$ with $A\bar r B$. Thus $\alpha(A)\bar r \alpha(B)$. This implies $\alpha_*\sheaff\bar \lambda_r \alpha_*\sheafg$. Then there exists some bounded function $\varphi\in C_r(Y)$ with
\f{
\sheaff\mhyphen\lim \varphi\circ \alpha
&=\alpha_*\sheaff\mhyphen\lim \varphi\\
&\not=\alpha_*\sheafg\mhyphen\lim \varphi\\
&=\sheafg\mhyphen\lim \varphi\circ \alpha.
}
Denote for $Z=X,Y$ the evaluation map 
\f{
e:Z&\to \R^{C_r(Z)}\\
z&\mapsto(\varphi(z))_\varphi.
}

Now the following diagram commutes
\[
  \xymatrix{
  \bar X^r\ar[r]^{\alpha_*}\ar[d]_\Phi
  &\bar Y^r\ar@{=}[d]\\
  \overline{e(X)}\ar[r]_{\alpha_*}
  &\overline{e(Y)}
  }
\]
where the upper horizontal map maps $\sheaff\in\partial_r X$ to $\alpha_*\sheaff$ and $x\in X$ to $\alpha(x)$ and the lower horizontal map maps $(\varphi\circ\alpha(x))_{\varphi\circ\alpha}$ to $(\varphi\circ\alpha(x))_\varphi$. Now both horizontal maps are continuous and open, thus $\Phi$ is continuous and open.

Since every ultrafilter on $X$ induces an $r$-ultrafilter on $X$ the map $\Phi$ is surjective on $\overline{e(X)}$.
\end{proof}

\begin{lem}
      Let $\alpha:X\to Y$ be a coarse map between proper metric spaces and let
$\bar X^{r_1},\bar Y^{r_2}$ be coarse compactifications of $X,Y$, respectively. 
If $Ar_1 B$ implies $\alpha(A)r_2\alpha(B)$ then $\alpha$ can be extended to a 
continuous map
\begin{align*}
\alpha_*:\bar X^{r_1} &\to \bar Y^{r_2}\\
x&\mapsto\begin{cases}
               \alpha(x) &x\in X\\
               \alpha_*x & x\in \partial_{r_1}X.
         \end{cases}
\end{align*}
If $X$ is $R$-discrete for some $R>0$ and $C_{r_2}(Y)\circ\alpha\s C_{r_1}(X)$ 
then $\alpha_*$ is the restriction of
\f{
\R^{C_{r_1}(X)}&\to \R^{C_{r_2}(Y)}\\
(\varphi(x))_\varphi&\mapsto (\varphi\circ\alpha(x))_\varphi
}
to $\overline{e(X)}$. Both descriptions of $\alpha$ coincide.
\end{lem}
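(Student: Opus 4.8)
The plan is to build the extension on the boundary from the pushforward of ultrafilters and then, under the extra hypotheses, to identify it with a reindexing map between the two evaluation embeddings. For the first statement I would fix a Delone subset $S\s X$, so that points of $\partial_{r_1}X=\hat S/\lambda_{r_1}$ are classes $[\sigma]$ of nonprincipal ultrafilters $\sigma$ on $S$ (Definition~\ref{defn:relationtocompactification2}); for the target I use the homeomorphic model $\partial'_{r_2}Y=\delta_{r_2}Y/\lambda_{r_2}$ of Definition~\ref{defn:relationtocompactification1}. On $X$ the map is $\alpha$ itself, and on the boundary I set $\alpha_*[\sigma]=[(\alpha|_S)_*\sigma]$, where $(\alpha|_S)_*\sigma=\{B\s Y:(\alpha|_S)^{-1}(B)\in\sigma\}$ is the image ultrafilter. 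This mirrors the construction of $\Phi$ in the proof of Proposition~\ref{prop:contravariant}.

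First I would check that $(\alpha|_S)_*\sigma$ really names a point of $\partial'_{r_2}Y$. Since $\alpha$ is coarsely proper, the preimage of a bounded set is bounded, hence finite in the $R$-discrete space $S$; as nonprincipal ultrafilters on $S$ contain no finite set, $(\alpha|_S)_*\sigma$ contains no bounded subset of $Y$, and together with the ultrafilter axioms and monotonicity of $r_2$ (a consequence of axioms 2 and 4 of Definition~\ref{defn:closerelation}) this shows $(\alpha|_S)_*\sigma\in\delta_{r_2}Y$. For well-definedness, suppose $\sigma\lambda_{r_1}\tau$ and take $A'\in(\alpha|_S)_*\sigma$, $B'\in(\alpha|_S)_*\tau$; then $(\alpha|_S)^{-1}(A')\in\sigma$ and $(\alpha|_S)^{-1}(B')\in\tau$, so $(\alpha|_S)^{-1}(A')\,r_1\,(\alpha|_S)^{-1}(B')$, and the hypothesis gives $\alpha((\alpha|_S)^{-1}(A'))\,r_2\,\alpha((\alpha|_S)^{-1}(B'))$. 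Since these images are contained in $A'$ and $B'$ respectively, monotonicity yields $A'\,r_2\,B'$, whence $(\alpha|_S)_*\sigma\,\lambda_{r_2}\,(\alpha|_S)_*\tau$. Continuity follows by computing the preimage of a generating closed set $\closedop A$ for $A\s Y$: it equals $\{[\sigma]:(\alpha|_S)^{-1}(A)\in\sigma\}=\closedop{(\alpha|_S)^{-1}(A)}$, which is closed, while on $X$ the preimage of $\bar A^{Y}$ is $\alpha^{-1}(\bar A^{Y})$, closed since $\alpha$ is continuous.

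For the second statement I assume $X$ is $R$-discrete and take $S=X$, so that $\overline{e(X)}=\bar X^{r_1}$ in the evaluation model. The inclusion $C_{r_2}(Y)\circ\alpha\s C_{r_1}(X)$ lets me define the reindexing map $P:\R^{C_{r_1}(X)}\to\R^{C_{r_2}(Y)}$ by $P(p)_\varphi=p_{\varphi\circ\alpha}$; it is continuous, being a product of coordinate projections, and on $e(X)$ it satisfies $P(e(x))_\varphi=(\varphi\circ\alpha)(x)=\varphi(\alpha(x))$, i.e.\ $P(e(x))=e(\alpha(x))$. Hence $P$ carries $\overline{e(X)}=\bar X^{r_1}$ into $\overline{e(Y)}=\bar Y^{r_2}$ and restricts to $\alpha$ on $X$. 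On a boundary point represented by $\sigma$, whose coordinates are $(\sigma\mhyphen\lim\psi)_{\psi}$, the map $P$ returns $(\sigma\mhyphen\lim(\varphi\circ\alpha))_\varphi=((\alpha_*\sigma)\mhyphen\lim\varphi)_\varphi$, using the change-of-variables identity for bounded ultrafilter limits already exploited in Proposition~\ref{prop:contravariant}; but this is exactly the $e$-coordinate vector of $[\alpha_*\sigma]=\alpha_*[\sigma]$. Therefore $P|_{\overline{e(X)}}=\alpha_*$, and the two descriptions coincide.

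The routine facts (that the image of an ultrafilter is an ultrafilter, and that bounded ultrafilter limits exist and transform naturally) I would simply quote. The genuinely delicate points are verifying that $(\alpha|_S)_*\sigma$ lands on the boundary rather than collapsing to an interior point, and the continuity in the abstract proximity model; both rest on coarse properness of $\alpha$ and on monotonicity of the relation, so I expect that to be the crux. Everything else parallels the argument of Proposition~\ref{prop:contravariant}.
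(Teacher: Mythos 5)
Your construction of $\alpha_*$ on the boundary via pushforward of ultrafilters, and the identification with the reindexing map $\R^{C_{r_1}(X)}\to\R^{C_{r_2}(Y)}$ in the second half, are sound, and the second half is essentially the paper's own argument (recorded there as the commuting square between $\bar X^{r_1}\to\bar Y^{r_2}$ and $\overline{e(X)}\to\overline{e(Y)}$). The gap is in your continuity check for the first half. The closed sets of $\bar Y^{r_2}$ are generated by the sets $\bar A^Y\cup\closedop A$, and you verify separately that the trace of the preimage on $X$ (namely $\alpha^{-1}(\bar A^Y)$) is closed in $X$ and that the trace on the boundary (namely $\closedop{(\alpha|_S)^{-1}(A)}$) is closed in $\partial_{r_1}X$. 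That does not yet give closedness in $\bar X^{r_1}$: since $X$ is open in $\bar X^{r_1}$, a union of a closed subset $C$ of $X$ with a closed subset of the boundary is closed only if it also contains every boundary accumulation point of $C$ (an unbounded closed $C\s X$ with empty boundary part has closed traces but is not closed in $\bar X^{r_1}$). So you still must show that every boundary limit point of $\alpha^{-1}(\bar A^Y)$ is sent by $\alpha_*$ into $\closedop A$. This is true but needs an argument: such a limit point is a class $[\sigma]$ with, up to a bounded perturbation, $\alpha^{-1}(\bar A^Y)\cap S\in\sigma$; then every member of the ultrafilter $(\alpha|_S)_*\sigma$ meets a fixed bounded neighbourhood of $A$ in an unbounded set, hence is $\close$-related and therefore $r_2$-related to $A$, and one concludes $\alpha_*[\sigma]\in\closedop A$ by the same Naimpally-style saturation step the paper uses to show that $\closedop A$ is closed in the proof that $\partial_r X$ and $\partial_r'X$ are homeomorphic. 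Relatedly, $\closedop A$ in the quotient is a set of equivalence classes, so ``$\alpha_*[\sigma]\in\closedop A$'' means some representative contains $A$, not necessarily $(\alpha|_S)_*\sigma$ itself; your identification of the boundary part of the preimage with $\closedop{(\alpha|_S)^{-1}(A)}$ needs the same care.

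Note that the paper sidesteps this entirely: it first shows that the relation hypothesis implies $C_{r_2}(Y)\circ\alpha\s C_{r_1}(X)$ (via the identity $\alpha_*\sheaff\mhyphen\lim\varphi=\sheaff\mhyphen\lim\varphi\circ\alpha$), and then obtains continuity of $\alpha_*$ for free as the restriction of the continuous coordinate map $\R^{C_{r_1}(X)}\to\R^{C_{r_2}(Y)}$ --- exactly your map $P$. If you reorder your write-up so that the function-algebra inclusion is derived first and $P$ is used to define the extension, the direct continuity computation in the proximity model becomes unnecessary; as written, that computation is the one step that does not go through.
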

\begin{proof}
      Suppose $A r_1 B$ implies $\alpha(A)r_2\alpha(B)$. Let $\varphi\in 
C_{r_2}(Y)$ be a function and let $\sheaff$ be an $r_1$-ultrafilter on $X$. 
Then $\alpha_*\sheaff$ is an $r_2$-ultrafilter on $Y$. Thus 
$\alpha_*\sheaff\mhyphen\lim \varphi$ exists. This point equals 
$\sheaff\mhyphen\lim\varphi\circ\alpha$. Since $\sheaff$ was arbitrary the map 
$\varphi\circ\alpha$ can be extended to $\bar X^{r_1}$. Thus 
$\varphi\circ\alpha\in C_{r_1}(X)$.

Now suppose $C_{r_2}(Y)\circ \alpha\s C_{r_1}(X)$. Let $A,B\s Y$ be subsets 
with $A \bar r_2 B$. Then there exists $\varphi\in C_{r_2}(Y)$ with 
$\varphi|_A\equiv 1,\varphi|_B\equiv 0$. Then $\varphi\circ\alpha|_{\iip \alpha 
A}\equiv 1,\varphi\circ\alpha|_{\iip \alpha B}\equiv 0$ and $\varphi\circ\alpha$ can be extended to $\bar X^{r_1}$. Thus $\iip \alpha A \bar r_1 \iip \alpha B$.

Note the diagram
\[
  \xymatrix{
  \bar X^{r_1}\ar[r]^{\alpha_*}\ar@{=}[d]
  &\bar Y ^{r_2}\ar@{=}[d]\\
  \overline{e(X)}\ar[r]_{\alpha_*}
  &\overline{e(Y)}
}
\]
commutes.
\end{proof}

\begin{prop}
\label{prop:ccsheaf}
Let $X$ be a proper metric space. If subsets $U_1,\ldots,U_n$ coarsely cover $X$ and each $U_i$ is equipped with a large-scale proximity relation $r_i$ such that $r_i,r_j$ agree on $U_i\cap U_j$ then the relation $r$ on subsets of $X$ defined by $Ar B$ if $(U_i\cap A) r_i(U_i\cap B)$ for some $i$ defines a large-scale proximity relation on $X$. If $X$ is $R$-discrete for some $R>0$ and for every $i$ there is a ring $C_{s_i}(U_i)$ of Higson functions such that $C_{s_i}(U_i)|_{U_j}=C_{s_j}(U_j)|_{U_i}$ then the ring 
\[
      C_s(X)=\{(\varphi_i)_i\in\prod_i 
C_{s_i}(U_i):\varphi_i|_{U_j}=\varphi_j|_{U_i}\}
\]
consists of Higson functions. If $r_i=s_i$ for every $i$ then the relation $r$ and the ring of bounded functions $C_s(X)$ describe the same compactification.
\end{prop}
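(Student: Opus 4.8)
The plan is to identify the two compactifications by showing they carry the same large-scale proximity relation and then invoking Theorem~\ref{thm:a}. By the first part of the proposition $r$ is a large-scale proximity relation, so it produces a coarse compactification $\bar X^r$ which induces $r$ (Theorem~\ref{thm:coarsetorelation}); by the second part $C_s(X)$ consists of Higson functions, so by Theorem~\ref{thm:a} it generates a coarse compactification $\bar X^{C_s(X)}$, whose induced large-scale proximity relation I denote $t$. Since a coarse compactification is determined up to homeomorphism over $X$ by its relation, it suffices to prove $t=r$. Here $A\bar t B$ means $\overline A\cap\overline B\cap\partial X=\emptyset$ in $\bar X^{C_s(X)}$, which by normality and the fact that the ring $C_s(X)$ generates the topology of $\bar X^{C_s(X)}$ (as in the proof of Theorem~\ref{thm:coarsefunctions}) amounts to the existence of a function in $C_s(X)$ separating the limit values of $A$ and $B$ at infinity.

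For the inclusion that $t$ is finer than $r$ (that is, $A r B\Rightarrow A t B$) I argue by restriction. If $A r B$ then by the definition of $r$ there is an index $i$ with $(A\cap U_i)\,r_i\,(B\cap U_i)$, and since $r_i=s_i$ the boundary closures of $A\cap U_i$ and $B\cap U_i$ meet in $\bar U_i^{s_i}$; hence no function of $C_{s_i}(U_i)$ separates them at infinity. As every $\psi\in C_s(X)$ has its $i$-th component $\psi|_{U_i}\in C_{s_i}(U_i)$, no such $\psi$ separates $A$ from $B$ either, so $A t B$.

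The substantial direction is that $r$ is finer than $t$, i.e. $A\bar r B\Rightarrow A\bar t B$. Fix $A\bar r B$; by definition $(A\cap U_i)\bar r_i(B\cap U_i)$ for every $i$. Using $r_i=s_i$ and the fact that $C_{s_i}(U_i)$ generates the compact Hausdorff space $\bar U_i^{s_i}$, in which the boundary closures of $A\cap U_i$ and $B\cap U_i$ are disjoint, I choose (after the standard affine normalization) a function $\varphi_i\in C_{s_i}(U_i)$ with $0\le\varphi_i\le1$, tending to $0$ at infinity along $A\cap U_i$ and to $1$ at infinity along $B\cap U_i$. The goal is to assemble these local separators into a single $\psi\in C_s(X)$ with the same behaviour along $A$ and $B$.

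The assembly is where the coarse-cover hypothesis and the compatibility $C_{s_i}(U_i)|_{U_j}=C_{s_j}(U_j)|_{U_i}$ enter, and I expect it to be the main obstacle. The $\varphi_i$ are chosen independently and need not agree on the overlaps $U_i\cap U_j$, so they cannot simply be glued. The plan is to produce a partition of unity $(\rho_i)_i$ subordinate to the coarse cover, with each $\rho_i\in C_s(X)$ taking values in $[0,1]$, $\sum_i\rho_i\equiv1$, and $\rho_i$ coarsely supported in $U_i$; its existence should follow from the defining property of a coarse cover that $E[U_1^c]\cap\cdots\cap E[U_n^c]$ is bounded for every entourage $E$. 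Setting $\psi=\sum_i\rho_i\varphi_i$, with each summand extended by $0$ off $U_i$ (legitimate because $\rho_i$ is coarsely supported in $U_i$), the limit of $\psi$ at infinity along $A$ is $\sum_i\rho_i\cdot0=0$ and along $B$ is $\sum_i\rho_i\cdot1=1$, so $\psi$ separates $A$ from $B$. That $\psi$ lands in the ring $C_s(X)$, rather than being merely Higson, follows from the compatibility hypothesis together with the closure of $C_s(X)$ under sums and products. The delicate points I expect to consume most of the work are constructing the $\rho_i$ so that they are simultaneously Higson, coarsely supported in $U_i$, and compatible in the sense of lying in $C_s(X)$, and verifying that the resulting $\psi$ really is a compatible tuple; this is precisely where the interplay between the coarse-cover combinatorics and the ring-compatibility condition must be controlled.
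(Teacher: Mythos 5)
Your proposal addresses only the third claim of the proposition. The first claim --- that $r$ satisfies the five axioms, of which axiom 3 is the genuinely delicate one and is where the paper spends most of its effort, using the coarse-cover hypothesis to produce $C,D$ with $C\cup D=X$, $C\notclose U_1^c$, $D\notclose U_2^c$ and then splitting $A=(A\cap C)\cup(A\cap D)$ --- and the second claim --- that a compatible tuple glues to a Higson function, proved in the paper by decomposing an arbitrary entourage as $E=(E\cap(U_1\times U_1))\cup\cdots\cup(E\cap(U_n\times U_n))\cup(\text{bounded})$ --- are cited circularly as ``the first/second part of the proposition'' and never argued. Those two verifications are the bulk of the paper's proof, so this alone is a substantial gap.

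For the third claim, reducing to the equality of the induced relations is legitimate (via Theorem~\ref{thm:coarsetorelation}), and your easy inclusion is essentially sound. But the hard inclusion rests on two ingredients you do not supply. First, you need the separating function $\varphi_i$ to lie in the ring $C_{s_i}(U_i)$ itself; Urysohn only produces a continuous function on the compactification, i.e.\ an element of the full algebra of extendable functions, and a generating ring need not contain it. Second, and fatally, you need a partition of unity $(\rho_i)_i$ with each $\rho_i\in C_s(X)$, i.e.\ with $\rho_i|_{U_j}\in C_{s_j}(U_j)$ for every $j$. The rings $C_{s_j}(U_j)$ are arbitrary rings of Higson functions subject only to the restriction-compatibility hypothesis, and nothing forces them to contain the restrictions of cut-off functions such as $d(x,U_i^c)/\sum_k d(x,U_k^c)$; you correctly identify this as the main obstacle but leave it unresolved, so the construction of $\psi$ never gets off the ground. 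The paper avoids the issue entirely: it observes that $r_i=s_i$ gives $\bar U_i^{r_i}=\overline{e(U_i)}$ for each $i$, so the two candidate compactifications of $X$ restrict to the same compactification on every member of the coarse cover, and then invokes the uniqueness (global) axiom of Lemma~\ref{lem:ccsheaf} --- a large-scale proximity relation on $X$ is determined by its restrictions to a coarse cover --- to conclude $\bar X^r=\overline{e(X)}$. Rerouting your argument through that uniqueness statement would close the gap without any partition of unity.
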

\begin{proof}
      We show $r$ is a large-scale proximity relation on $X$:
      \begin{enumerate}
            \item if $B\s X$ is bounded then $B\cap U_i$ is bounded for every $i$. Thus $B\bar r B$. If $A\s X$ is not bounded then there exists some $i$ such that $A\cap U_i$ is not bounded. Then $(A\cap U_i)r_i (A\cap U_i)$ thus $ArA$.
            \item Symmetry is obvious.
            \item Without loss of generality assume $n=2$. Let $A,A',B\s X$ be subsets and let $E\s X\times X$ be an entourage with $E[A]= A',E^{-1}[A']= A$ and 
$Ar B$. Since $U_1,U_2$ coarsely cover $X$ the relation $U_1^c\notclose U_2^c$ holds. Thus there exist $C,D\s X$ with $C\cup D=X$ and $C\notclose U_1^c,D\notclose U_2^c$. Now $A=(A\cap C)\cup (A\cap D)$. Thus by axiom 4 $(A\cap C)r B$ or $(A\cap D)r B$. Suppose the former holds. Since $A\cap C\s U_1\cup B'$ where $B'$ is bounded we have $(A\cap C\cap U_1)r_1 (B\cap U_1)$. Now $E[A\cap C]\s U_1\cup B''$ where $B''$ is bounded. Then $(E[A\cap C]\cap U_1)r_1 (B\cap U_1)$. Thus $A' r B$ by axiom 4.             \item If $(A\cup B)r C$ then $(A\cup B)r_i C$ for some $i$. Thus $A 
r_i C$ or $B r_i C$. This implies $Ar C$ or $Br C$. If $Ar C$ or $Br C$ then 
$(A\cap U_i)r_i (C\cap U_i)$ for some $i$ or $(B\cap U_j)r_j(C\cap U_j)$ for 
some $j$. This implies $((A\cup B)\cap U_i)r_i(C\cap U_i)$ or $((A\cup B)\cap 
U_j)r_j(C\cap U_j)$. Thus $(A\cup B)r C$.
            \item Without loss of generality assume $n=2$ and $U_1,U_2$ cover 
$X$ as sets. If $A\bar r B$ then $(A\cap U_i)\bar r_i (B\cap U_i)$ for both 
$i$. Thus there exist $C_i,D_i\s U_i$ with $C_i\bar r_i (A\cap U_i),D_i\bar r_i 
(B\cap U_i)$ and $C_i\cup D_i=U_i$ for $i=1,2$. Then $(A\cap U_1\cap U_2)\bar r 
(C_1\cup C_2), (A\cap U_1\cap U_2^c) \bar r (C_1\cup U_2),(A\cap U_2\cap 
U_1^c)\bar r (C_2\cup U_1)$ combine to $A\bar r (C_1\cup C_2)$. Similarly we 
obtain $B\bar r (D_1\cup D_2)$. Now
\f{
X
&=U_1\cup U_2\\
&=C_1\cup D_1\cup C_2\cup D_2.
}
      \end{enumerate}
Now we show $C_s(X)$ consists of Higson functions. Suppose $\varphi_i\in 
C_{s_i}(U_i)$ for $i=1,\ldots,n$ are elements with 
$\varphi_i|_{U_j}=\varphi_j|_{U_i}$. Then they can be glued to a bounded 
continuous function $\varphi:X\to \R$. Let $E\s X\times X$ be an entourage. Then
\[
      E=(E\cap(U_1\times U_1))\cup\cdots\cup (E\cap(U_n\times U_n))\cup A
\]
where $A\s B\times B$ with $B$ bounded in $X$. Now 
$(d\varphi)|_{E\cap(U_i\times U_i)}=(d\varphi|_{U_i})|_E$ converges to zero at 
infinity for every $i$. This implies $(d\varphi)|_E$ converges to zero at 
infinity.

If $r_i=s_i$ for every $i$ then $\bar U_i^{r_i}=\overline{e(U_i)}$ for every 
$i$. The $\bar U_i^{r_i}$ glue to $\bar X^r$ and the $\overline{e(U_i)}$ glue 
to $\overline{e(X)}$. The global axiom of Lemma~\ref{lem:ccsheaf} implies 
uniqueness. Thus $\bar X^r=\overline{e(X)}$.

\end{proof}

Let $X$ be a proper $R$-discrete for some $R>0$ metric space. For every  subspace $A\s X$ the poset of coarse compactifications on $A$ is called $\mathcal{CC}(A)$. If $A\s B$ is an inclusion of subspaces then there is a poset map $\mathcal{CC}(B)\to\mathcal{CC}(A)$ induced by the inclusion.
 
The Grothendieck topology determined by coarse covers on a metric space $X$ is called $X_{ct}$. A contravariant functor $\sheaff$ on subsets of $X$ is a sheaf on $X_{ct}$ if for every coarse cover $U_1,U_2\s U$ of a subset of $X$ the following diagram is an equalizer
\[
 \sheaff(U)\to\sheaff(U_1)\oplus\sheaff(U_2)\rightrightarrows\sheaff(U_1\cap U_2).
\]

\begin{lem}
\label{lem:ccsheaf}
 The functor $\mathcal {CC}$ on subsets of $X$ is a sheaf on $X_{ct}$. 
\end{lem}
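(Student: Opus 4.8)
The plan is to verify the two halves of the sheaf condition — gluing and separation — for a coarse cover, passing freely between the three equivalent descriptions of a coarse compactification supplied by Theorem~\ref{thm:a}. Since $X_{ct}$ is generated by finite coarse covers and the equalizer diagram is written for a two-element cover $U_1,U_2$ of $U$, I would first reduce to that case; a general finite cover is handled by the same bookkeeping (or by induction on the number of sets). Recall that $\mathcal{CC}$ is a contravariant functor to posets, the restriction $\mathcal{CC}(U)\to\mathcal{CC}(U_i)$ along $U_i\hookrightarrow U$ being the pullback of Proposition~\ref{prop:contravariant}: on relations it sends a large-scale proximity $r$ on $U$ to its restriction to subsets of $U_i$, and on function rings it sends $C_r(U)$ to $C_r(U)|_{U_i}=C_{r|_{U_i}}(U_i)$. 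Being an equalizer of posets means exactly that the comparison map $r\mapsto(r|_{U_1},r|_{U_2})$ identifies $\mathcal{CC}(U)$ with the subposet of pairs agreeing on $U_1\cap U_2$; so I must show this map is surjective onto the compatible pairs (gluing), injective (separation), and order-reflecting.

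For gluing, given compatible $r_1,r_2$ with $r_1|_{U_1\cap U_2}=r_2|_{U_1\cap U_2}$, Proposition~\ref{prop:ccsheaf} already produces a large-scale proximity $r$ on $U$, namely $ArB$ iff $(A\cap U_i)\,r_i\,(B\cap U_i)$ for some $i$. It remains to check $r|_{U_i}=r_i$. For $A,B\subseteq U_1$ one has $A\cap U_1=A$, $B\cap U_1=B$, so $A\,r_1\,B$ gives $ArB$; conversely the only other disjunct is $(A\cap U_2)\,r_2\,(B\cap U_2)$, whose arguments lie in $U_1\cap U_2$ where $r_2=r_1$, and monotonicity (axiom 4) upgrades $(A\cap U_2)\,r_1\,(B\cap U_2)$ to $A\,r_1\,B$. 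Hence $r|_{U_1}=r_1$, and symmetrically $r|_{U_2}=r_2$, so the comparison map is surjective onto compatible pairs.

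The hard part is separation: if $r,r'$ on $U$ satisfy $r|_{U_i}=r'|_{U_i}$ for $i=1,2$, then $r=r'$. I would argue through the Higson-function description, which sidesteps the scale bookkeeping that plagues a purely relational argument. By Theorem~\ref{thm:coarsefunctions} and Proposition~\ref{prop:contravariant} the hypothesis reads $C_r(U)|_{U_i}=C_{r'}(U)|_{U_i}$, and it suffices to prove $C_r(U)\subseteq C_{r'}(U)$. The engine is the coarse-cover localization identity
\[
U_1\cap E[U_2]\ \subseteq\ E[U_1\cap U_2]\cup(\text{bounded})\qquad\text{for every entourage }E,
\]
extracted from the definition of coarse cover exactly as in the axiom-3 computation of Proposition~\ref{prop:ccsheaf}: a point of $U_1$ that is $E$-close to $U_2$ is either $E$-close to $U_1\cap U_2$ or lies in the bounded set $E[U_1^c]\cap E[U_2^c]$. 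Using $U_1^c\notclose U_2^c$ I would fix a partition $U=C\sqcup C^c$ with $C$ coarsely contained in $U_1$ and $C^c$ coarsely in $U_2$. Given $\varphi\in C_r(U)$, choose $\psi_i\in C_{r'}(U)$ with $\psi_i|_{U_i}=\varphi|_{U_i}$ (possible since $\varphi|_{U_i}\in C_r(U)|_{U_i}=C_{r'}(U)|_{U_i}$), and set $\tilde\varphi:=\psi_1$ on $C$ and $\tilde\varphi:=\psi_2$ on $C^c$; then $\tilde\varphi$ equals $\varphi$ off a bounded set. The crucial point is that the seam between $C$ and $C^c$ sits coarsely inside $U_1\cap U_2$: any pair $(x,y)$ with $x\in C$, $y\in C^c$, $d(x,y)\le R$ has $x\in U_1\cap E[U_2]$, hence $x$ is within $R$ of $U_1\cap U_2$ off a bounded set, where $\psi_1=\varphi=\psi_2$; together with the Higson property of $\psi_1,\psi_2$ this forces $d\tilde\varphi|_E\to 0$, so $\tilde\varphi$ is Higson. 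To see $\tilde\varphi$ extends to $\bar U^{r'}$ I would check continuity at each $p\in\overline{C}\cap\overline{C^c}\cap\partial U$: from $C\,r'\,C^c$ and $r'\Rightarrow\close$ (Lemma~\ref{lem:finerthanclose}) there is an entourage-net $(c_n,c'_n)\to p$ with $c_n\in C$, $c'_n\in C^c$; the localization identity places the $c_n$ within bounded distance of a net in $U_1\cap U_2$, and since $\bar U^{r'}$ is coarse that net also converges to $p$, whence $p\in\overline{U_1\cap U_2}$ and $\psi_1(p)=\psi_2(p)$. Thus $\tilde\varphi\in C_{r'}(U)$, and correcting by the bounded-supported difference $\tilde\varphi-\varphi\in C_{r'}(U)$ gives $\varphi\in C_{r'}(U)$. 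By symmetry $C_r(U)=C_{r'}(U)$, so $r=r'$.

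Finally, separation and gluing together make the comparison map a bijection onto compatible pairs; in particular every $r\in\mathcal{CC}(U)$ equals the gluing of its restrictions, so the gluing formula $ArB\Leftrightarrow\exists i\,(A\cap U_i)\,r|_{U_i}\,(B\cap U_i)$ applies to all sections and yields order-reflection (it shows $r\subseteq r'$ as soon as $r|_{U_i}\subseteq r'|_{U_i}$ for both $i$, compatibly with the quotients of Proposition~\ref{prop:quotientclosefiner}). This exhibits the equalizer, so $\mathcal{CC}$ is a sheaf on $X_{ct}$. I expect the main obstacle to be exactly the seam analysis in the separation step — controlling the two locally given compactifications where $U_1$ and $U_2$ interact at infinity — and the two facts that make it go through are that every large-scale proximity is finer than $\close$ (so interactions are genuinely coarse, witnessed by entourage-nets) and the coarse-cover localization identity (so those interactions are confined to $U_1\cap U_2$, where the local data already agree).
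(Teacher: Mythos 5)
Your overall architecture is sound, and the gluing half essentially matches the paper: both invoke Proposition~\ref{prop:ccsheaf}, and your extra check that the glued relation restricts back to the given $r_i$ (the cross term $(A\cap U_2)\,r_2\,(B\cap U_2)$ has both arguments in $U_1\cap U_2$, where $r_2=r_1$, and axiom~4 then upgrades it to $A\,r_1\,B$) is a detail the paper leaves implicit but which is genuinely needed for the gluing axiom. For separation, however, you diverge: the paper stays entirely on the level of relations and deduces $r=r'$ in one line from the identity $ArB\Leftrightarrow\bigvee_i\bigl((A\cap U_i)\,r\,(B\cap U_i)\bigr)$, valid for any large-scale proximity relation and any coarse cover, so that a section is determined by its restrictions. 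Your detour through function rings is viable in principle but, as written, has two gaps.

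First, to choose $\psi_i\in C_{r'}(U)$ with $\psi_i|_{U_i}=\varphi|_{U_i}$ you need surjectivity of the restriction $C_r(U)\to C_{r|_{U_i}}(U_i)$; the inclusion $C_r(U)|_{U_i}\subseteq C_{r|_{U_i}}(U_i)$ is unproblematic, but surjectivity is an extension problem requiring that the closure of $U_i$ in $\bar U^{r}$ be identified with $\bar U_i^{\,r|_{U_i}}$ (after which Tietze applies to this closed subspace). That identification is derivable from Theorem~\ref{thm:coarsetorelation} but is nowhere established and must be argued. Second, the assertion that $p\in\overline{C}\cap\overline{C^c}\cap\partial U$ yields an entourage-net $(c_n,c_n')\to p$ with $c_n\in C$, $c_n'\in C^c$ does not follow from Lemma~\ref{lem:finerthanclose} alone: $C\close C^c$ produces \emph{some} unbounded bounded-distance sequence, which may accumulate at a different boundary point, while nets converging to $p$ from $C$ and from $C^c$ separately need not pair up into an entourage. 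The repair is to localize: for every neighborhood $V$ of $p$ one has $(V\cap C)\,r'\,(V\cap C^c)$, hence $(V\cap C)\close(V\cap C^c)$; off a bounded set the resulting pairs $(a,b)$ satisfy $a\in U_1$ and $b\in U_1\cap U_2$, and the Higson property of $\psi_1$ together with continuity of $\psi_1,\psi_2$ near $p$ forces $\psi_1(p)=\psi_2(p)$. With these two points filled in your argument works; it is worth noting, though, that the paper's relational identity conceals an entirely analogous seam analysis (reducing a cross term $(A\cap U_1)\,r\,(B\cap U_2)$ to a same-index term via axiom~4, Lemma~\ref{lem:finerthanclose}, and a decomposition $C\cup D=U$ with $C\notclose U_1^c$, $D\notclose U_2^c$), so the relational route is shorter to state but not substantially easier to make rigorous.
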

\begin{proof}
Note every subspace of $X$ is proper. If $A\s X$ is a subset we define $\bar A^r\ge \bar A^s$ if $s$ is finer than $r$.
      
If $A\s B$ is an inclusion of subspaces and $\bar B^r\in\mathcal{CC}(B)$ then the restriction map associated to the inclusion $A\to B$ maps $\bar B^r\mapsto \bar A^{r|_A}$. Here the relation $r|_A$ is defined as $Sr|_AT$ if $SrT$. Then $r|_A$ is a large-scale proximity relation on $A$:
\begin{enumerate}
\item If $S\s A$ is bounded, then $S\bar r S$, thus $S\bar r|_A S$. If $S$ is unbounded then $S r S$ so $S r|_A S$. 
\item Symmetry is obvious.
\item If $S,S',T\s A$ are subsets, $E\s A^2$ is an entourage with $E[S]\z S',E[S']\z S$ and $Sr|_AT$ then $E\s B^2$ is an entourage in $B$. Thus $S'r T$ which implies $S'r|_AT$.
\item obvious.
\item If $S,T\s A$ are subsets with $S\bar r T$ then there exist subsets $C',D'\s B$ with $C'\cup D'=B$ and $C'\bar r S,D'\bar r T$. Then $C:=C'\cap A,D:=D'\cap A$ are subsets with $C\cup D=A, C\bar r S,D\bar r T$.
\end{enumerate}
Note if a large scale proximity relation $s$ on $B$ is finer than another large-scale proximity relation $r$ on $B$ then $s|_A$ is finer than $r|_A$ on $A$. This makes $\mathcal{CC}$ into a functor on the poset of subsets of $X$ to posets.

Now we check the global axiom: Let $(U_i)_i$ be a coarse cover of $X$ and let $r,s$ be close relations on $X$ with $r|_{U_i}=s|_{U_i}$. Two subsets $A,B\s X$ satisfy $ArB$ if and only if $\bigvee_i((A\cap U_i)r(B\cap U_i))$ if and only if $\bigvee_i((A\cap U_i)r|_{U_i}(B\cap U_i))$ if and only if $\bigvee_i((A\cap U_i)s|_{U_i}(B\cap U_i))$ if and only if $\bigvee_i((A\cap U_i)s(B\cap U_i))$ if and only if $AsB$.

Now we check the gluing axiom: Let $U_1,\ldots,U_n$ be a coarse cover of $X$ equipped with coarse compactifications $\bar U_1^{r_1},\ldots , \bar U_n^{r_n}$ such that $r_i|_{U_j}=r_j|_{U_i}$ for every $ij$. Then the proof Proposition~\ref{prop:ccsheaf} implies the $\bar U_i^{r_i}$ glue to a coarse compactification $\bar X^r$ of $X$.
\end{proof}

\section{Higson corona}

This section is denoted to the Higson corona. We recall the original description.
\begin{defn}\name{Higson corona}
Let $X$ be a proper metric space. A bounded continuous function $\varphi:X\to 
\R$ is called \emph{Higson} if for every entourage $E\s X^2$ the map 
\f{
d\varphi|_E:E &\to \R\\
(x,y)&\mapsto \varphi(x)-\varphi(y)
}
vanishes at infinity. Then the compactification $hX$ of $X$ generated by 
the Higson functions $C_h(X)$ is called the \emph{Higson compactification}. The boundary 
of this compactification $\nu(X)=hX\setminus X$ is called the \emph{Higson 
corona}. 
\end{defn}

The large-scale proximity relation induced on $X$ is the close relation.
\begin{rem}
If $X$ is a proper metric space then $\partial_\close(X)=\nu(X)$.
\end{rem}
\begin{proof} This follows from  by 
\cite[Theorem~20]{Hartmann2019c}. 

We provide an alternative proof: Let $A,B\s X$ be subsets. If $A\close B$ then by \cite[Theorem~5.14]{Naimpally1970} there exists a $\close$-ultrafilter $\sheaff$ on $X$ with $A,B\in \sheaff$. Thus $\sheaff\in\closedop A\cap \closedop B$ is not empty. If on the other hand $A\notclose B$ then $\sheaff\in \closedop A$ implies $\sheaff\not\in\closedop B$. Thus $\closedop A\cap \closedop B=\emptyset$ is empty. This way we have shown that $\close$ is the unique relation on subsets of $X$ that tells when the closure of two subsets meet on the boundary of the Higson compactification. 
\end{proof}

\begin{prop}
      If $X$ is a one-ended proper metric space then $\nu(X)$ is connected. This implies that every coarse compactification of $X$ is connected.
\end{prop}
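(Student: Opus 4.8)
The plan is to prove that $\nu(X)$ has no proper nonempty clopen subset. Since $\nu(X)=\partial_\close X$ is compact Hausdorff, this amounts to showing that every continuous $f\colon\nu(X)\to\{0,1\}$ is constant, so I would assume toward a contradiction that $f$ is surjective, with both fibres nonempty. Because $hX$ is compact Hausdorff and hence normal, Tietze's theorem extends $f$ to a continuous $\bar\varphi\colon hX\to[0,1]$, and its restriction $\varphi:=\bar\varphi|_X$ is Higson by Theorem~\ref{thm:coarsefunctions}, since it extends to the coarse compactification $hX$. As $\bar\varphi$ takes only the values $0$ and $1$ on $\nu(X)$, the set $M:=\{x\in X: 1/4\le\varphi(x)\le 3/4\}$ must be bounded: an unbounded such set would have a limit point in $\nu(X)$ at which $\bar\varphi$ lay in $[1/4,3/4]$.

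Next I would read off a coarse partition by setting $A:=\{x:\varphi(x)\ge 3/4\}$ and $B:=\{x:\varphi(x)\le 1/4\}$, so that $A\cup B\cup M=X$ with $M$ bounded. Both $A$ and $B$ are unbounded, since any point of the nonempty fibre $f^{-1}(1)$ is a limit of a (necessarily unbounded) net in $X$ along which $\varphi\to 1$, hence a net eventually inside $A$, and symmetrically for $B$. The crucial step, and precisely where the Higson property is used, is that $A\notclose B$: if instead $A\close B$, there would be an unbounded sequence $(a_i,b_i)_i\in A\times B$ inside a single entourage $E=\{(x,y):d(x,y)\le R\}$; there $d\varphi|_E(a_i,b_i)=\varphi(a_i)-\varphi(b_i)$ has absolute value at least $1/2$, contradicting that $d\varphi|_E$ vanishes at infinity. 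Absorbing the bounded set $M$ into $B$, which preserves unboundedness and, by additivity of $\close$, does not make it close to $A$, yields $X=A\cup B'$ with $A,B'$ both unbounded and $A\notclose B'$.

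It remains to contradict one-endedness, which is exactly where that hypothesis enters. From $A\cup B'=X$ and $A\notclose B'$ we obtain $A\notclose_f B'$, so $\bar A\cap\Omega X$ and $\bar B'\cap\Omega X$ are disjoint; and since $A,B'$ are unbounded while $X$ is proper, each of these closures meets $\Omega X$, so both are nonempty. But $X$ one-ended means the space of ends $\Omega X$ is a single point, which cannot be split into two disjoint nonempty sets, the contradiction sought. (Equivalently, if one takes one-endedness to mean that $X$ admits no decomposition into two unbounded $\notclose$-separated subsets, the contradiction is immediate.) I expect the only genuine obstacle to be this dictionary between a clopen splitting of $\nu(X)$ and a coarse decomposition of $X$, together with fixing which formulation of one-endedness to contradict; everything else is the routine bookkeeping indicated above.

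For the final assertion I would invoke the universality of the Higson compactification among coarse compactifications: every coarse compactification $\bar X$ is a quotient of $hX$ by a map $q$ that restricts to the identity on $X$, whence $q(\nu(X))=\partial X$. A continuous image of the connected space $\nu(X)$ is connected, so the boundary of every coarse compactification of $X$ is connected.
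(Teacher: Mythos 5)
Your proof is correct, but it takes a genuinely different route from the paper's. The paper argues entirely inside the proximity/ultrafilter model of $\nu(X)$: a clopen $\pi\subseteq\nu(X)$ satisfies $\pi\notclose\pi^c$ in the induced proximity, which by definition produces $A,B\subseteq X$ with $A\notclose B$ and $\closedop A\cup\closedop B=\nu(X)$; it then quotes the proof of Theorem~\ref{thm:freudenthaluniversal} to conclude that $A\cup B\to X$ is coarsely surjective, so one-endedness forces one of $A,B$ to be bounded and hence one of $\pi,\pi^c$ to be empty. You instead work in the function model: you realize the clopen splitting as a $\{0,1\}$-valued function on $\nu(X)$, extend it by Tietze to $hX$, invoke Theorem~\ref{thm:coarsefunctions} to see the restriction is Higson, and extract the decomposition $X=A\cup B'$ with $A\notclose B'$ and both parts unbounded directly from the level sets, the Higson property supplying $A\notclose B'$ and properness supplying boundedness of the middle level set $M$. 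Both proofs bottom out in the same contradiction with the paper's definition of one-endedness (every unbounded $A$ with unbounded complement satisfies $A\close A^c$), and your parenthetical correctly identifies that formulation as the one to use. Your version is more self-contained, since it does not lean on the Freudenthal universality argument, at the price of the Tietze/level-set bookkeeping; note also that you supply the deduction for the second assertion (via the quotient map $hX\to\bar X$ carrying $\nu(X)$ onto the boundary), which the paper's proof leaves implicit, and that, as your argument makes clear, the conclusion really concerns connectedness of the boundary $\partial X$ rather than of $\bar X$ itself.
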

\begin{proof}
Recall that a metric space has at most one end if for every $A\s X$ we have $A\close A^c$ or one of $A,A^c$ is bounded. Suppose $\pi\s \nu(X)$ is a clopen subset. Then $\pi\notclose \pi^c$. Then there exist $A,B\s X$ with $\pi\s \closedop A,\pi^c\s\closedop B$ and $A\notclose B$. By the proof of Theorem~\ref{thm:freudenthaluniversal} the inclusion $A\cup B\to X$ is coarsely surjective. Thus one of $A,B$ is bounded which implies one of $\pi,\pi^c$ is the empty set.
\end{proof}

Now we select Higson functions which separate coarsely disjoint subsets of $X$. A close examination shows they together with the constant functions already generate the Higson functions.

Let $X$ be a proper metric space. For every two subsets $A,B\s X$ with 
$A\notclose B$ we define
\f{
\varphi_{A,B}:X&\to \R\\
x&\mapsto \frac{d(x,A)}{d(x,A)+d(x,B)}
}
where we assume without loss of generality $d(A,B)>0$. If $F\s C^*(X)$ is a subset 
$\mathcal A(F)$ denotes the intersection of all algebras in $C^*(X)$ which 
contain $F$. 

\begin{prop}
There is an isomorphism of $C^*$-algebras 
\[
\overline{\mathcal A((\varphi_{A,B})_{A\notclose B}\cup\{1\})}=C_h(X).  
\]
Here the closure is in $C^*(X)$ with the $\sup$-metric. 
\end{prop}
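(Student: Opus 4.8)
The plan is to apply the Stone--Weierstrass theorem on the compact Hausdorff space $hX$. Recall that $C_h(X)$ is canonically identified with $C(hX)$, the real $C^*$-algebra of continuous functions on the Higson compactification. Writing $\mathcal B=\overline{\mathcal A((\varphi_{A,B})_{A\notclose B}\cup\{1\})}$, the set $\mathcal B$ is by construction a closed unital subalgebra of $C^*(X)$, so it suffices to establish two things: first, that every generator $\varphi_{A,B}$ lies in $C_h(X)$, which yields the inclusion $\mathcal B\subseteq C_h(X)=C(hX)$; and second, that the generators together with the constants separate the points of $hX$. The real form of Stone--Weierstrass (a closed subalgebra containing the constants and separating points is everything) then forces $\mathcal B=C(hX)=C_h(X)$.

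For the first point I would show directly that $\varphi_{A,B}$ is Higson whenever $A\notclose B$. The key observation is that $A\notclose B$ is equivalent to the coercivity of $y\mapsto d(y,A)+d(y,B)$, i.e. that for every $M$ the set $\{y:d(y,A)+d(y,B)\le M\}$ is bounded; indeed, an unbounded sequence on which this sum stayed bounded would, by choosing near points in $A$ and in $B$, produce an unbounded sequence $(a_i,b_i)\in A\times B$ of bounded distance, contradicting $A\notclose B$. Then for an entourage $E$ with $d(x,y)\le R$ on $E$, writing $f=d(\cdot,A)$ and $g=d(\cdot,B)$, a short computation using $|f(x)-f(y)|,|g(x)-g(y)|\le R$ gives
\[
|\varphi_{A,B}(x)-\varphi_{A,B}(y)|=\frac{|f(x)g(y)-f(y)g(x)|}{(f(x)+g(x))(f(y)+g(y))}\le\frac{R}{d(y,A)+d(y,B)}.
\]
The right-hand side vanishes at infinity by the previous observation, so $d\varphi_{A,B}|_E$ vanishes at infinity and $\varphi_{A,B}$ is Higson. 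In particular $\varphi_{A,B}$ extends continuously to $hX$, taking the value $0$ on $\closedop A$ and $1$ on $\closedop B$.

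For the separation of points I would distinguish three cases on $hX$. Two distinct interior points $x\ne y$ are separated by $\varphi_{\{x\},\{y\}}$, since two bounded sets are never close, so $\{x\}\notclose\{y\}$ and the function takes the values $0$ and $1$. For a boundary point $\xi\in\nu(X)$ and an interior point $x$, choosing a cobounded $A$ with $x\notin A$ gives $\closedop A=\nu(X)\ni\xi$ and $A\notclose\{x\}$, and $\varphi_{A,\{x\}}$ separates them. The decisive case is two distinct boundary points $\xi\ne\eta$: in the ultrafilter model $\nu(X)=\partial'_\close X$ these are represented by $\close$-ultrafilters $\sheaff\bar\lambda_\close\sheafg$, so by the definition of $\lambda_\close$ there exist $A\in\sheaff$ and $B\in\sheafg$ with $A\notclose B$; then $\xi\in\closedop A$ and $\eta\in\closedop B$, and by the boundary values computed above $\varphi_{A,B}(\xi)=0\ne1=\varphi_{A,B}(\eta)$.

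The only genuinely delicate step is the Higson estimate, where the crude Lipschitz-type argument gives only the uniform (and useless) bound $R/d(A,B)$; the improvement to a quantity that actually vanishes at infinity rests precisely on translating $A\notclose B$ into the coercivity of $d(\cdot,A)+d(\cdot,B)$. Everything else is a routine verification of the Stone--Weierstrass hypotheses, the main conceptual ingredients being the identification $C_h(X)=C(hX)$ and the ultrafilter model of $\nu(X)$ used to separate boundary points.
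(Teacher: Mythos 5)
Your proof is correct, and its skeleton coincides with the paper's: show each generator $\varphi_{A,B}$ is Higson, show the extended functions separate boundary points via the ultrafilter description of $\nu(X)$ (your third case is exactly the paper's separation argument), and conclude by a Stone--Weierstrass-type theorem. The genuine difference is that you make the two outsourced steps self-contained. Where the paper cites \cite[Lemma~2.2]{Dranishnikov1998} for the Higson property, you prove it directly, and your estimate is sound: with $f=d(\cdot,A)$, $g=d(\cdot,B)$ the identity $f(x)g(y)-f(y)g(x)=f(x)\bigl(g(y)-g(x)\bigr)+g(x)\bigl(f(x)-f(y)\bigr)$ gives $|\varphi_{A,B}(x)-\varphi_{A,B}(y)|\le R/(f(y)+g(y))$ on an entourage of width $R$, and your translation of $A\notclose B$ into coercivity of $f+g$ is precisely what makes this vanish at infinity (the naive bound $R/d(A,B)$ would indeed be useless). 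Where the paper invokes \cite[Theorems~2.1 and~3.4]{Ball1982} to pass from ``the generators separate points of the corona'' to the conclusion, you apply Stone--Weierstrass on $hX$ directly, which obliges you to also separate pairs of interior points and interior-from-boundary points; you handle both correctly with $\varphi_{\{x\},\{y\}}$ and $\varphi_{A,\{x\}}$ for cobounded $A$ avoiding a neighbourhood of $x$. The price of your route is that you must take as given the isometric identification $C_h(X)\cong C(hX)$, which in this paper rests on the definition of $hX$ as the compactification generated by the Higson functions together with Theorem~\ref{thm:coarsefunctions}; the gain is independence from the two external references.
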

\begin{proof}
Suppose $A,B\s X$ are subsets with $A\notclose B$. By 
\cite[Lemma~2.2]{Dranishnikov1998} the function $\varphi_{A,B}$ is Higson. 
Thus we have shown $(\varphi_{A,B})_{A\notclose B}\s C_h(X)$.

Now we show $(\bar\varphi_{A,B})_{A\notclose B}$ separates points of $\nu(X)$: 
Let $\sheaff,\sheafg\in \nu(X)$ be points with $\sheaff\bar\lambda_\close 
\sheafg$. Then there exist $A\in \sheaff, B\in \sheafg$ with $A\notclose B$. 
Then
\f{
\bar\varphi_{A,B}([\sheaff])
& =\sheaff\mhyphen\lim \varphi_{A,B}\\
&=0\\
&\not=1\\
&=\sheafg\mhyphen\lim \varphi_{A,B}\\
&=\bar\varphi_{A,B}([\sheafg]).
}
Thus $\bar \varphi_{A,B}$ separates $\sheaff,\sheafg$.

By \cite[Theorem~2.1]{Ball1982} the $(\varphi_{A,B})_{A\notclose B}$ generate 
the compactification $hX$. Now we use \cite[Theorem~3.4]{Ball1982} and obtain 
the result.
\end{proof}

Call a metric space $W$ with $\{x,y\in W:d(x,y)\le R,x\not=y\}$ finite for every $R>0$ \emph{discrete coarse} \cite[Example 2.7]{Roe2003}.

\begin{rem}
If $X$ is an unbounded proper metric space then it contains a sequence $(x_i)_i\s X$ with $d(x_i,x_j)>i$ for $j<i$. Thus $(x_i)_i$ is discrete coarse. It is easy to check every bounded function on $(x_i)_i$ is Higson. Then $h((x_i)_i)=\beta(\N)$ and $\nu((x_i)_i)=\beta(\N)\setminus \N$. Since $h,\nu$ preserves monomorphisms $h((x_i)_i),\nu((x_i)_i)$ arise as subspaces of $h(X),\nu(X)$. Thus $\nu(X)$ contains a copy of $\beta(\N)\setminus \N$ and $hX$ contains a copy of $\beta(\N)$. This fact has already been proved in \cite[Theorem 3]{Keesling1994}.
\end{rem}

\begin{prop}
       If $X$ is a proper metric space then the union of $\closedop W$ over every 
    discrete coarse subspace $W$ of $X$ is dense in $\nu(X)$.
\end{prop}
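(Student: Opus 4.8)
The plan is to show that the union $D:=\bigcup_W \closedop W$, taken over all discrete coarse subspaces $W\s X$, meets every nonempty open subset of $\nu(X)$. Since the sets $(\closedop A^c)_{A\s X}$ form a base for the topology of $\nu(X)=\partial_\close X$, it suffices to prove that $D\cap \closedop A^c\neq\emptyset$ whenever $\closedop A^c\neq\emptyset$.

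The heart of the argument is the following claim: if $\closedop A^c\neq\emptyset$ then there exists an unbounded subset $B\s X$ with $B\notclose A$. I would prove the contrapositive. Suppose every unbounded subset of $X$ is close to $A$. Fix a Delone set $S\s X$ and let $p\in\nu(X)$ be an arbitrary point, represented via the homeomorphism $\Phi$ by a nonprincipal ultrafilter $\sigma$ on $S$; the representing $\close$-ultrafilter is $\sheaff=\{A'\s X: A'\close B\ \forall B\in\sigma\}$, so that $p\in\closedop A$ exactly when $A\close B$ for all $B\in\sigma$. Since $\sigma$ is nonprincipal and $S$ is $R$-discrete inside a proper space, every $B\in\sigma$ is infinite, hence unbounded; by hypothesis $A\close B$ for all such $B$, so $A\in\sheaff$ and $p\in\closedop A$. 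As $p$ was arbitrary this forces $\closedop A=\nu(X)$, i.e. $\closedop A^c=\emptyset$, proving the claim.

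Given the claim, I would thin the unbounded set $B$ to a discrete coarse subspace. Choosing points $x_1,x_2,\dots\in B$ inductively with $d(x_i,x_k)>i$ for all $k<i$ (always possible since $B$ is unbounded) yields $W=\{x_i\}_i\s B$, which is discrete coarse, because for each $R$ only the finitely many pairs with both indices $\le R$ can lie within distance $R$, and $W$ is unbounded. Since $\close$ is monotone in each argument and $W\s B$ while $B\notclose A$, we obtain $W\notclose A$, hence $\closedop W\cap\closedop A=\emptyset$, that is $\closedop W\s\closedop A^c$. Finally $W$ unbounded gives $W\close W$ and therefore $\closedop W\neq\emptyset$. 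Thus $\emptyset\neq\closedop W\s\closedop A^c$ exhibits a point of $D$ in $\closedop A^c$, and density follows.

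The main obstacle is the claim in the second paragraph; the rest is bookkeeping. The subtlety is that membership $W\s A^c$ as \emph{sets} would not suffice, since an unbounded subset of $A^c$ can still be coarsely close to $A$; one genuinely needs an unbounded set that is \emph{coarsely} disjoint from $A$, and it is precisely nonemptiness of $\closedop A^c$, read through the ultrafilter description of $\closedop A$, that guarantees such a set exists.
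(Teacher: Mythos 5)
Your argument is correct, and it reaches the conclusion by a different route than the paper. You work entirely inside the ultrafilter model of $\nu(X)$: using that the sets $\closedop A^c$ form a base, you reduce density to showing that every nonempty $\closedop A^c$ contains some $\closedop W$ with $W$ discrete coarse, and you extract the needed $W$ by first producing (via the ultrafilter description of $\closedop A$) an unbounded $B$ with $B\notclose A$ and then greedily thinning $B$. The paper instead dualizes to functions: it shows that the restriction map $C(\nu(X))\to \prod_W C(\nu(W))$ over discrete coarse subspaces is injective --- a Higson function whose restriction to every discrete coarse subspace vanishes at infinity must itself vanish at infinity, by extracting a discrete subsequence on which $|\varphi|\ge\varepsilon$ --- and then invokes the fact that continuous functions on the compact Hausdorff space $\nu(X)$ separate points from closed sets. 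The combinatorial kernel (every unbounded set contains a discrete coarse subset, obtained by choosing $x_i$ with $d(x_i,x_k)>i$ for $k<i$) is identical in both proofs; what differs is the packaging. Your version is more elementary and avoids function theory altogether, at the price of leaning on the identification of $\nu(X)$ with classes of nonprincipal ultrafilters on a Delone set and on the fact that $\closedop W\cap\closedop A=\emptyset$ iff $W\notclose A$; the paper's version additionally establishes the injectivity of the induced map on $C^*$-algebras, which has independent interest. One small stylistic remark: your claim in the second paragraph admits a direct proof (a point $[\sigma]\in\closedop A^c$ immediately yields some $B\in\sigma$ with $B\notclose A$, and $B$ is unbounded because $\sigma$ is nonprincipal on an $R$-discrete subset of a proper space), which is slightly shorter than the contrapositive you give, though both are valid.
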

\begin{proof}
      Define 
      \f{
      \Phi:\bigsqcup_{W\s X \mbox{ discrete}} \nu(W)&\to \nu(X)\\
      \sheaff&\mapsto i_*\sheaff
      }
      where $i: W\to X$ is the inclusion. We show
      \[
            \Phi^*:C(\nu(X))\to C(\bigsqcup_{W\s X\mbox{ discrete}}\nu(W))
      \]
 is injective. Note $C(\nu(X))=C_h(X)/C_0(X)$ and 
 \f{
 C(\bigsqcup_W\nu(W))
 &=\prod_W C(W)\\
 &=\prod_W C_h(W)/C_0(W).
 }
Let $\varphi\in C_h(X)$ be a Higson function. We need to show if $(\varphi\circ 
\Phi)_W\in C_0(W)$ for every discrete subset $W\s X$ then $\varphi\in C_0(X)$. 
Assume for contradiction that $\varphi$ does not converge to zero at infinity. Then there exists $\varepsilon>0$ such that for every $i\in \N$ there is some $x_i\not\in B(x_0,i)$ (Here $x_0\in X$ is a fixed point and $B(x_0,i)$ denotes the ball of radius $i$ around $x_0$) with the property $|\varphi(x_i)|\ge \varepsilon$. Now choose a subsequence $(x_{i_k})_k$ with $\{x_{i_k}:k\}$ discrete. Then $\varphi|_{\{x_{i_k}:k\}}\not\in C_0(\{x_{i_k}:k\})$. Since bounded functions on $\nu(X)$ separate points from closed sets we have shown that the closure of $\im \Phi$ is $\nu(X)$. 

The closure of $\im \Phi$ is $\nu(X)$ since $\bigcup_W \closedop W\s \closedop A$ implies the inclusion $i:A\to X$ is coarsely surjective (Every unbounded subset of $X$ contains a discrete subset). 

Suppose $X$ is $R$-discrete for some $R>0$. If $X$ is not discrete there always exists an ultrafilter on $X$ which does not contain a discrete subspace. Define a filter 
\[
\sheaff=\{X\setminus W: W\s X \mbox{ discrete or finite}\}
\]
Then $\sheaff$ is a filter:
\begin{enumerate}
      \item If $X\setminus W,X\setminus V\in \sheaff$ then $W,V$ are discrete 
or finite. This implies $W\cup V$ is discrete or finite, thus $(X\setminus 
W)\cap (X\setminus V)=X\setminus(V\cup W)\in \sheaff$.
 \item If $X\setminus W\in \sheaff$ and $X\setminus W\s X\setminus V$ then $W$ 
is discrete or finite and $V\s W$. This implies $V$ is discrete or finite. Thus 
$X\setminus V\in \sheaff$.
\end{enumerate}
If $X$ is not discrete or finite then $\sheaff$ is a proper filter. Then there exists an ultrafilter finer than $\sheaff$, it does not contain a discrete subspace.
\end{proof}

\section{Space of ends}

The space of ends $\Omega(X)$ of a topological space is the boundary of the Freudenthal compactification $\varepsilon(X)$. In this chapter we will study a coarse version of the Freudenthal compactification which coincides with the topological version of the Freudenthal compactification for a large class of proper metric spaces.

Recall \cite[Problem~41B]{Willard1970}:

\begin{defn}\name{Freudenthal compactification, topological version}
Let $X$ be a rim-compact Tychonoff space. Define a relation $\delta$ on subsets of $X$ by $A\bar\delta B$ for $A,B\s X$ if there is a compact subset $K\s X$ such that $X\setminus K=G\cup H$ is a disjoint union of two open subsets with $\bar A\s G,\bar B\s H$.

The Smirnov compactification of the proximity space $(X,\delta)$ is called the \emph{Freudenthal compactification}. 
\end{defn}
Its boundary is zero dimensional.

Let $A,B\s X$ be subsets of a metric space. Define $A\notclose_f B$ if there 
exist $A'\z A,B'\z B$ with $A'\cup B'=X$ and $A'\notclose B'$.

\begin{prop}
\label{prop:freudenthalequivcoarsetop}
Let $X$ be a proper geodesic metric space. Then it is rim-compact Tychonoff. If $A,B\s X$ are two subsets then $A\delta B$ if and only if $\bar A\cap \bar B\not=\emptyset$ or $A\close_f B$.
\end{prop}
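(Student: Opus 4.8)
The plan is to record the two regularity properties and then prove the stated equivalence in its contrapositive form: $A\bar\delta B$ if and only if $\bar A\cap\bar B=\emptyset$ and $A\notclose_f B$.

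Every metric space is Tychonoff, so only rim-compactness needs argument, and for this the geodesic hypothesis is not even needed. Since $X$ is proper, every closed ball $\bar B(x,\rho)$ is compact. The open balls $B(x,\rho)$ form a basis of the topology, and the frontier of $B(x,\rho)$ is a closed subset of the compact ball $\bar B(x,\rho)$, hence compact. Thus $X$ has a basis of open sets with compact frontier, which is exactly rim-compactness.

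For the direction $A\bar\delta B\Rightarrow(\bar A\cap\bar B=\emptyset$ and $A\notclose_f B)$ I would fix a compact $K$ with $X\setminus K=G\cup H$ a disjoint union of open sets and $\bar A\s G$, $\bar B\s H$. Then $\bar A\cap\bar B\s G\cap H=\emptyset$ is immediate. To witness $A\notclose_f B$ I take $A'=G\z A$ and $B'=H\cup K\z B$, so $A'\cup B'=X$, and show $G\notclose(H\cup K)$. Here the geodesic hypothesis is the key geometric input: if $x\in G$ and $y\in H$, any geodesic from $x$ to $y$ is connected with endpoints in the disjoint open sets $G,H$, so it must meet $K$, giving $d(x,K)\le d(x,y)$. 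Consequently, for a fixed $R$, any point within $R$ of both $G$ and $H\cup K$ lies within $3R$ of $K$, so the relevant neighborhood intersection sits inside a bounded neighborhood of the compact set $K$. This gives $G\notclose(H\cup K)$, hence $A\notclose_f B$.

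The reverse direction is where the real work lies and is the step I expect to be the main obstacle. Assume $\bar A\cap\bar B=\emptyset$ and $A\notclose_f B$, with witnesses $A'\z A$, $B'\z B$, $A'\cup B'=X$, $A'\notclose B'$. Since $A'\notclose B'$, the set $\{x:d(x,A')\le1\}\cap\{x:d(x,B')\le1\}$ is bounded, so it lies in a ball $\bar B(x_0,\rho_0)$. Outside this ball every point lies in exactly one of $A',B'$ and is at distance $>1$ from the other, which yields a locally constant function $\phi\in\{0,1\}$ on $U:=X\setminus\bar B(x_0,\rho_0)$ with $\phi=0$ on $A\cap U$ and $\phi=1$ on $B\cap U$ (the coarse separation far away). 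On the bounded part I use the topological hypothesis $\bar A\cap\bar B=\emptyset$ to form $w(x)=d(x,\bar A)/(d(x,\bar A)+d(x,\bar B))$, continuous with $w\equiv0$ on $\bar A$ and $w\equiv1$ on $\bar B$. The crux is to glue these: choosing a bump function $\psi$ with $\psi\equiv1$ on $\bar B(x_0,\rho_0+1)$ and $\psi\equiv0$ off $\bar B(x_0,\rho_0+2)$, set $F=\psi w+(1-\psi)\phi$. One checks that $F$ is continuous (the term $(1-\psi)\phi$ is supported where $\phi$ is defined), that $F\equiv0$ on $\bar A$ and $F\equiv1$ on $\bar B$, and that $F\in\{0,1\}$ outside $\bar B(x_0,\rho_0+2)$. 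Then $G=F^{-1}([0,1/2))$ and $H=F^{-1}((1/2,1])$ are disjoint open sets containing $\bar A$ and $\bar B$, while $K=F^{-1}(1/2)\s\bar B(x_0,\rho_0+2)$ is compact, giving $A\bar\delta B$. The main obstacle is exactly this gluing: the coarse hypothesis separates $A$ from $B$ only outside a bounded region, inside which they may interleave and are kept apart solely by $\bar A\cap\bar B=\emptyset$; fusing the two separations into one continuous function with compact $1/2$-level set is the heart of the argument, and the remaining continuity and boundedness verifications are routine.
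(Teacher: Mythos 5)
Your argument is correct, and its skeleton matches the paper's: rim-compactness from properness of closed balls, the forward direction via the fact that a geodesic joining a point of $G$ to a point of $H$ must cross $K$, and the reverse direction by combining the coarse separation $A'\notclose B'$ (effective outside a bounded set) with the topological separation $\bar A\cap\bar B=\emptyset$ (needed on the bounded part). Where you genuinely diverge is in how the reverse direction assembles the compact separator: the paper builds $G$ and $H$ directly as set unions, taking the $1/4$-neighborhoods $A'',B''$ of $A'\setminus K'$ and $B'\setminus K'$ together with disjoint open neighborhoods $U\z\bar A$, $V\z\bar B$ supplied by normality, and letting $K$ be the (closed, bounded) complement of $A''\cup U\cup B''\cup V$; you instead glue a locally constant far-field separator $\phi$ to a Urysohn-type function $w$ via a bump function and take $K$ to be the $1/2$-level set of the resulting $F$. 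The two constructions buy slightly different things: the paper's is shorter and stays entirely set-theoretic, but it leaves implicit the verification that $G=A''\cup U$ and $H=B''\cup V$ are actually disjoint (e.g.\ that $A''$ does not meet $V$), which requires an additional argument since $U,V$ are not controlled metrically; your function-gluing version makes disjointness automatic, since $G=F^{-1}([0,1/2))$ and $H=F^{-1}((1/2,1])$ cannot meet, at the cost of the routine continuity checks you flag. Both correctly isolate the same two hypotheses as the essential inputs, and both use the geodesic hypothesis only in the forward direction.
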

\begin{proof}
Since $X$ is a metric space every point $x\in X$ has a basis of open neighborhoods $\{\mathring B(x,\varepsilon):\varepsilon>0\}$, here $\mathring B(x,\varepsilon)$ denotes the open ball of radius $\varepsilon$ around $x$. Since $X$ is proper the the set 
\[
\overline{B(x,\varepsilon)}\setminus \mathring B(x,\varepsilon)\s \overline{\mathring B(x,\varepsilon)}
\]
is compact. Thus $X$ is rim-compact. Note every metric space is Tychonoff.

Suppose $A,B\s X$ are two subsets with $A\bar \delta B$. Then there exists a compact set $K\s X$ such that $X\setminus K=G\cup H$ with appropriate properties. Let $R>0$ be a number. If $g\in G, h\in H$ are points with $d(g,h)\le R$ then there exists $k\in K$ with 
\f{
d(g,k)+d(k,h)
&=d(g,h)\\
&\le R.
}
Now $K$ is bounded thus there exists $S\ge 0,x_0\in X$ with $K\s B(x_0,S)$. Then $g,h\in B(x_0,S+R)$. This proves $G\notclose H$. Thus $A\notclose_f B$. Since $\delta$ is compatible with the topology on $X$ the relation $\bar A \cap \bar B=\emptyset$ follows.

Suppose $A,B\s X$ are two subsets with $A\notclose_f B$ and $\bar A\cap \bar B\not= \emptyset$. The first relation implies there are $A'\z A,B'\z B$ with $X=A'\cup B',A'\notclose B'$. Then there exists a bounded set $K'\s X$ such that $d(A'\setminus K',B'\setminus K')>1$. Define $A''=\bigcup_{a\in A'\setminus K'}\mathring B(a,1/4)$ and $B''=\bigcup_{b\in B'\setminus K'}\mathring B(b,1/4)$. Now since $X$ is normal there exist open sets $U\z \bar A, V\z \bar B$ with $U\cap V=\emptyset$. The set 
\[
K:=A''^c\cap U^c\cap B''^c\cap V^c\s (A'\setminus K')^c\cap(B'\setminus K')^c=K'
\]
is bounded and closed. Since $X$ is proper this set is compact. We define $G=A''\cup U,H=B''\cup V$. Then $G,H$ are open and disjoint. We have 
\f{
X\setminus K
&=A''\cup U\cup B'' \cup V\\
&=G\cup H
}
and $A\s G,B\s H$. Thus we have shown $A\bar \delta B$. 
\end{proof}

 It is easy to see that $\close_f$ is a large-scale proximity relation. Thus $\bar X^{\close_f}$ is a coarse compactification of $X$. By Proposition~\ref{prop:freudenthalequivcoarsetop} the space is homeomorphic to $\varepsilon(X)$ if $X$ is proper geodesic metric. By slight abuse of notation we write $\Omega(X),\varepsilon(X)$ for the coarse versions of the space of ends, Freudenthal compactification as well.

\begin{defn}
Let $X$ be a metric space with basepoint $x_0\in X$. A bounded continuous map 
$\varphi:X\to \R$ is called \emph{Freudenthal} if for every $R\ge 0$ there exists $K\ge 0$ such that $d(x,y)\le R,d(x_0,x)\le K, d(x_0,y)\le K$ implies $\varphi(x)=\varphi(y)$. We write $C_f(X)$ for the ring of Freudenthal functions on $X$.
\end{defn}

\begin{lem}
Let $X$ be a proper metric space. A bounded continuous function $\varphi:X\to \R$ is Freudenthal if and only if it can be extended to $\bar X^f$.
\end{lem}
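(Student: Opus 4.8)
The plan is to work in the ultrafilter model of $\partial_{\close_f}X$. Fix the basepoint $x_0$, and, since every datum involved is a coarse invariant, assume without loss of generality that $X$ is $R$-discrete and take $S=X$ as its own Delone set, so that $\partial_f X=\hat X/\lambda_f$ where $\sigma\lambda_f\tau$ means $A\in\sigma,B\in\tau\Rightarrow A\close_f B$. A bounded continuous $\varphi$ extends continuously to $\bar X^f$ exactly when the assignment $\sigma\mapsto\sigma\mhyphen\lim\varphi$ (which exists because $\varphi$ is bounded) descends to the quotient, i.e. $\sigma\lambda_f\tau$ implies $\sigma\mhyphen\lim\varphi=\tau\mhyphen\lim\varphi$; continuity of the resulting map on $\bar X^f$ is then formal from the generating closed sets $\bar A\cup\closedop A$. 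Thus both directions reduce to comparing the defining condition of a Freudenthal function with this $\lambda_f$-invariance of limits. Note also that every Freudenthal function is Higson, so by Theorem~\ref{thm:coarsefunctions} the whole discussion takes place inside a coarse compactification.

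For the implication that a Freudenthal $\varphi$ extends, the key observation is a clean coarse separation of sub- and superlevel sets. I claim that for any real $m$ one has $\{\varphi\le m\}\notclose\{\varphi>m\}$: if $a$ lies in the first set, $b$ in the second, and $d(a,b)\le R$, then $\varphi(a)\le m<\varphi(b)$ forces $\varphi(a)\neq\varphi(b)$, so by the Freudenthal property at scale $R$ at least one of $a,b$ lies in $B(x_0,K)$, whence both lie in $B(x_0,K+R)$; as $R$ was arbitrary this is exactly coarse disjointness. Consequently, for $t<t'$ the sets $\{\varphi\le t\}$ and $\{\varphi\ge t'\}$ satisfy $\{\varphi\le t\}\notclose_f\{\varphi\ge t'\}$, by splitting $X=\{\varphi\le m\}\cup\{\varphi>m\}$ with $m=(t+t')/2$. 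Now if $\sigma\lambda_f\tau$ but $\sigma\mhyphen\lim\varphi=c\neq c'=\tau\mhyphen\lim\varphi$, choose $c<t<t'<c'$ (after swapping if needed); then a member of $\sigma$ on which $\varphi$ is within $\delta<t-c$ of $c$ sits inside $\{\varphi\le t\}$, and a member of $\tau$ on which $\varphi$ is within $\delta<c'-t'$ of $c'$ sits inside $\{\varphi\ge t'\}$. These two members of $\sigma$ and $\tau$ are $\notclose_f$, contradicting $\sigma\lambda_f\tau$. Hence the limit is $\lambda_f$-invariant and $\varphi$ extends.

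For the converse, suppose $\varphi$ extends to a continuous $\bar\varphi$ on $\bar X^f$, and argue that $\varphi$ is Freudenthal. Since $\Omega X=\partial_f X$ is compact and totally disconnected, for each $\varepsilon>0$ its boundary can be written as a finite disjoint union of clopen pieces, each the trace of some $\closedop A$, on each of which $\bar\varphi$ oscillates by less than $\varepsilon$; distinct pieces $\closedop A,\closedop{A'}$ are disjoint, which is precisely $A\notclose_f A'$. Pulling this partition back gives, outside a suitable ball $B(x_0,K)$, a decomposition of $X$ into mutually coarsely separated sets on each of which $\varphi$ varies by at most $\varepsilon$. This already rules out $d(x,y)\le R$ with $x,y$ far out in different pieces (they would be $\close$ across a $\notclose_f$ cut), so such $x,y$ share a piece and satisfy $|\varphi(x)-\varphi(y)|<\varepsilon$.

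The main obstacle is exactly this last step. The definition of Freudenthal demands the \emph{exact} equality $\varphi(x)=\varphi(y)$ for far-out pairs at each fixed scale $R$, whereas the pullback argument only delivers closeness up to an arbitrary $\varepsilon$. To close the gap I would exploit that each pulled-back piece is $\notclose_f$ from the rest, and that along any ultrafilter $\sigma$ the difference $\varphi(x)-\varphi(y)$ over entourage-related nets vanishes (since $\sigma$ and the shifted ultrafilter are $\lambda_f$-equivalent and hence share the value of $\bar\varphi$); the task is to promote this ultrafilter-wise vanishing to a uniform exact equality on the far part of $X$ at scale $R$. This passage from approximate to exact local constancy at infinity is the delicate point, and it is precisely here that the totally disconnected structure of $\Omega X$, rather than mere coarseness, must be brought to bear in full.
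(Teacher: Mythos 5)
Your forward direction is correct and is essentially the paper's own argument: the paper also defines the extension by ultrafilter limits and proves well-definedness by splitting $X$ into the preimages of $(-\infty,m)$ and $[m,\infty)$ at the midpoint $m$ of two putative distinct limits, using the Freudenthal property exactly as in your level-set claim to see the two halves are $\notclose$, hence the halves witness $\notclose_f$ and the limit descends through $\lambda_f$. The one point you treat too lightly is continuity of the extension: it is not purely formal, and the paper proves it by a second application of the same separation trick (for $S,T\s\R$ with $\bar S\cap\bar T=\emptyset$ one produces $C$ with $\varphi^{-1}(C)\notclose\varphi^{-1}(C^c)$ and deduces $\varphi^{-1}(S)\notclose_f\varphi^{-1}(T)$); this is a routine elaboration of the lemma you already proved, so it is a small omission, not an error.

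The obstacle you flag in the converse is a genuine gap, and in fact it cannot be closed: the exact-equality converse is false as stated. Take $X=\Z_+$, which is one-ended, so $\bar X^f$ is the one-point compactification; then $\varphi(n)=2-2^{-n}$ is bounded, continuous, and extends (it converges to $2$ at infinity), yet $\varphi(n)\neq\varphi(n+1)$ for every $n$, so no $K$ works at scale $R=1$ and $\varphi$ is not Freudenthal. Note that the paper's proof of this direction takes a different route from your clopen-partition pullback: it shows that Freudenthal functions separate points of $\partial_f X$ (via characteristic functions of sets $C$ with $C\notclose C^c$, which are continuous after the reduction to an $R$-discrete space) and then invokes Ball's theorem that a point-separating ring generates the compactification. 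But that argument establishes only that $\bar X^f$ is the compactification \emph{generated} by $C_f(X)$, i.e., that the extendable functions form the uniform closure of the algebra generated by the Freudenthal functions --- which is precisely the ``$\varepsilon$ instead of equality'' statement at which your pullback argument correctly stops (indeed, $2-2^{-n}$ is a uniform limit of eventually constant, hence Freudenthal, functions). So your instinct not to promote approximate to exact local constancy at infinity is sound: no appeal to total disconnectedness of $\Omega X$ can rescue it, because the Freudenthal functions are not uniformly closed while the extendable functions are. The lemma is true only with ``Freudenthal'' relaxed to its approximate form (for every $R\ge 0$ and $\varepsilon>0$ there is $K$ such that $d(x,y)\le R$ and $d(x_0,x),d(x_0,y)\ge K$ imply $|\varphi(x)-\varphi(y)|<\varepsilon$), or with extendability read up to uniform approximation.
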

\begin{proof}
Without loss of generality assume $X$ is $R$-discrete for some $R>0$. 

Suppose a bounded continuous function $\varphi:X\to \R$ is Freudenthal. If $\sheaff$ is an ultrafilter on $X$ define $\bar\varphi(\sheaff)=\sheaff\mhyphen\lim \varphi$. We show $\varphi$ is well defined: Let $\sheaff,\sheafg$ be ultrafilters on $X$ with $\sheaff\mhyphen\lim\varphi\not=\sheafg\mhyphen\lim\varphi$. Then $X=\iip \varphi {(-\infty,\frac{\sheaff\mhyphen\lim \varphi+\sheafg\mhyphen\lim\varphi}2)}\cup \iip \varphi {[\frac{\sheaff\mhyphen\lim \varphi+\sheafg\mhyphen\lim\varphi}2,\infty)}$ and $\iip \varphi {(-\infty,\frac{\sheaff\mhyphen\lim \varphi+\sheafg\mhyphen\lim\varphi}2)}\notclose \iip \varphi {[\frac{\sheaff\mhyphen\lim \varphi+\sheafg\mhyphen\lim\varphi}2,\infty)}$. Thus $\sheaff\bar\lambda_f \sheafg$.

We show $\bar \varphi$ is continuous: Choose an Interval $I\s \R$ such that $\im\varphi\s I$ and consider $\varphi$ as a map $X\to I$. Let $S,T\s I$ be subsets such that $\bar S\cap \bar T=\emptyset$. Then there is some subset $C\s I$ with $S\s C,T\s C^c$ and $\bar C\cap \bar T=\emptyset,\overline{ C^c}\cap S=\emptyset$. Then we obtain $\iip\varphi C\z \iip \varphi S,\iip \varphi{C^c}\z \iip \varphi T$ and $X=\iip \varphi C\cup \iip \varphi{C^c}$. Now let $R\ge 0$ be a number then there exists a bounded set $B\s X$ such that $d(x,y)\le R,\varphi(x)\not=\varphi(y)$ implies $x,y\in B$. Thus if $x\in \iip \varphi C,y\in \iip \varphi {C^c}$ and $d(x,y)\le R$ then $x,y\in B$. This implies $\iip\varphi C \notclose \iip \varphi {C^c}$. Thus $\iip \varphi S \notclose_f \iip \varphi T$. This shows 
\f{
\iip {\bar \varphi} S\cap \iip {\bar \varphi} T
&=(\iip \varphi S\cup\closedop {\iip\varphi S})
\cap (\iip \varphi T\cup\closedop {\iip\varphi T})\\
&=\emptyset.
}
Thus $\bar\varphi$ is continuous. 

Now we show $C_f(X)$ separates points of $\partial_f(X)=\bar X^f\setminus X$. If $\sheaff,\sheafg$ are ultrafilters on $X$ with $\sheaff\bar\lambda_f\sheafg$ then there are $A\in \sheaff,B\in \sheafg$ with $A\notclose_f B$. Then there exists $C\s X$ with $A\s C,B\s C^c$ and $C\notclose C^c$. Define
\begin{align*}
\varphi:X&\to \R\\
x&\mapsto\begin{cases}
1 & x\in C\\
0 & x\in C^c.
\end{cases}
\end{align*}
Then $\varphi$ is a Freudenthal function. Now the extension $\bar \varphi$ of $\varphi$ separates $\sheaff$ from $\sheafg$. Then by \cite{Ball1982} the ring $C_f(X)$ determines the compactification $\bar X^f$ of $X$.
\end{proof}

\begin{thm}
\label{thm:freudenthaluniversal}
Let $X$ be a proper metric space. The boundary of the Freudenthal 
compactification $\Omega X=\varepsilon X\ohne X$ of $X$ is totally disconnected. 
If $(\bar X,X)$ is another coarse compactification whose boundary is 
totally disconnected then it factors through $\varepsilon X$. The association 
$\Omega$ is a functor that maps coarse maps modulo close to continuous maps.
\end{thm}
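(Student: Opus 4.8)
The plan is to prove the three assertions in turn, using the identification $\bar X\cong\bar X^{r_{\bar X}}$ from Theorem~\ref{thm:coarsetorelation} together with the quotient machinery of Proposition~\ref{prop:quotientclosefiner} and Remark~\ref{rem:uniquequotient}. Throughout I write $r=r_{\bar X}$ for the relation induced by a coarse compactification and recall from Lemma~\ref{lem:finerthanclose} that $\close$ is the finest large-scale proximity relation, so that $\close\Rightarrow\close_f$ and $\close\Rightarrow r$.

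For total disconnectedness of $\Omega X$, I would separate two distinct points $[\sheaff]\neq[\sheafg]$ by a clopen set. Unravelling $\sheaff\bar\lambda_{\close_f}\sheafg$ produces $A\in\sheaff$, $B\in\sheafg$ with $A\notclose_f B$, and the definition of $\notclose_f$ then yields $A'\z A$, $B'\z B$ with $A'\cup B'=X$ and $A'\notclose B'$. The key observation is that any coarse partition $X=C\cup C^c$ with $C\notclose C^c$ already satisfies $C\notclose_f C^c$ (the partition witnesses itself), so $\closedop C$ and $\closedop{C^c}$ are disjoint and cover $\Omega X$, whence each is clopen. Taking $C=A'$ gives $[\sheaff]\in\closedop{A'}$, while $[\sheafg]\in\closedop{B'}$, which is disjoint from $\closedop{A'}$; thus $[\sheafg]\in\closedop{(A')^c}$. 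Hence $\closedop{A'}$ is a clopen set separating the two points, and $\Omega X$ is totally disconnected.

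For universality I would reduce the claim to the implication $A\bar r B\Rightarrow A\notclose_f B$: once this holds, $r$ is finer than $\close_f$, so Proposition~\ref{prop:quotientclosefiner} and Remark~\ref{rem:uniquequotient} furnish the desired surjection $\varepsilon X\to\bar X$ that is the identity on $X$ and continuous on the boundary. Given $A\bar r B$, the sets $\closedop A,\closedop B$ are disjoint closed subsets of the totally disconnected, hence zero-dimensional, compact Hausdorff space $\partial X$, and so can be separated by a clopen $U\s\partial X$. Extending $\chi_U$ by the Tietze theorem to a continuous $g\colon\bar X\to[0,1]$ and restricting to $X$ yields, by Theorem~\ref{thm:coarsefunctions}, a Higson function. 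I would then cut along $g$ into $P=g^{-1}((2/3,1])$, $Q=g^{-1}([0,1/3))$ and a middle band $M$. Properness forces $M$ to be bounded, since $g$ takes only the values $0,1$ on $\partial X$ so no boundary point can be a limit of $M$. The Higson property, combined with the fact that the limit points of $A$ lie in $U$ (where $g=1$) and those of $B$ in its complement (where $g=0$), shows that the pieces $A,P,M$ are each not close to $B,Q$; assembling $A'=A\cup P\cup M$ and $B'=B\cup Q$ then gives $A'\cup B'=X$ with $A'\notclose B'$, i.e. $A\notclose_f B$. This lifting of a clopen separation of the boundary to an honestly not-close coarse partition of $X$ is the main obstacle, being exactly where the Higson property and properness must be exploited.

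For functoriality, let $\alpha\colon X\to Y$ be coarse. I would first verify $A\close_f B\Rightarrow\alpha(A)\close_f\alpha(B)$ by pulling back a witnessing partition $C\cup D=Y$, $C\notclose D$ of $\alpha(A)\notclose_f\alpha(B)$ to $g^{-1}$-style preimages $\alpha^{-1}(C)\cup\alpha^{-1}(D)=X$; coarse uniformity and coarse properness of $\alpha$ then guarantee $\alpha^{-1}(C)\notclose\alpha^{-1}(D)$. The extension lemma following Proposition~\ref{prop:contravariant} consequently extends $\alpha$ to a continuous $\alpha_*\colon\varepsilon X\to\varepsilon Y$ restricting to $\Omega(\alpha)$ on the boundaries, with a point represented by $\sheaff$ sent to the one represented by the pushforward ultrafilter $\alpha_*\sheaff$; the relations $(\beta\alpha)_*=\beta_*\alpha_*$ and $(\mathrm{id})_*=\mathrm{id}$ are then immediate. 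Finally, if $\alpha,\alpha'$ are close then the entourage $\{(\alpha(x),\alpha'(x))\}$ yields $\alpha(W)\close\alpha'(W)$ for every unbounded $W$, so $\alpha_*\sheaff\,\lambda_{\close_f}\,\alpha'_*\sheaff$ for every $\sheaff$ and hence $\Omega(\alpha)=\Omega(\alpha')$. This exhibits $\Omega$ as a functor from proper metric spaces and coarse maps modulo close to totally disconnected compact Hausdorff spaces and continuous maps.
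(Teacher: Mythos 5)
Your proposal is correct, and two of its three parts track the paper closely: the total-disconnectedness argument (separating points via the clopen sets $\closedop{A'}$ attached to a partition $A'\cup B'=X$ with $A'\notclose B'$) and the functoriality argument (pulling back a witnessing partition of $\alpha(A)\notclose_f\alpha(B)$ along $\alpha$) are essentially the paper's, the only cosmetic difference being that the paper exhibits a clopen \emph{basis} $(\closedop A)_{A\notclose A^c}$ rather than merely separating pairs of points. The universality step is where you genuinely diverge. The paper stays entirely on the relation side: it takes a clopen $\pi\s\partial_r X$, picks $A,B\s X$ with $A\bar r B$, $\pi\s\closedop A$, $\pi^c\s\closedop B$, notes $A\notclose B$ (Lemma~\ref{lem:finerthanclose}) and that $\closedop{A\cup B}\z\partial_r X$ forces the inclusion $A\cup B\to X$ to be coarsely surjective, and then thickens $A,B$ by an entourage to manufacture the partition witnessing $A\notclose_f B$. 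You instead prove the stronger, uniform implication $A\bar r B\Rightarrow A\notclose_f B$ for \emph{all} pairs by passing to the function side: clopen separation of $\closedop A,\closedop B$ in the zero-dimensional compact boundary, a Tietze extension $g$ of the characteristic function, and Theorem~\ref{thm:coarsefunctions} to make $g|_X$ Higson, whose level sets cut $X$ into $P\cup M\cup Q$ with $M$ compact hence bounded (since $g\in\{0,1\}$ on $\partial X$) and $P\notclose Q$ (by the Higson property). Your route costs an invocation of Tietze and the function-theoretic characterization but buys the clean statement that $r$ is finer than $\close_f$, which plugs directly into Proposition~\ref{prop:quotientclosefiner} and Remark~\ref{rem:uniquequotient}; the paper's route avoids functions altogether but needs the coarse-surjectivity observation and only treats the pairs $(A,B)$ attached to clopen subsets of the boundary. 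Both arguments are sound.
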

\begin{rem}
Compare this result with \cite[Theorem 1]{Peschke1990}. The Freudenthal 
compactification of a topological space with nice properties is universal among 
compactifications with totally disconnected boundary.
\end{rem}
\begin{proof}
At first we show $\Omega X$ is totally disconnected. It is sufficient to show 
that there exists a basis consisting of clopen subsets in 
$\partial_{\close_f}(X)$. If $A\s X$ has the property $A\notclose A^c$ then 
$\closedop A=\closedop{A^c}^c$ is both open and closed. Now we show $(\closedop 
A)_{A\notclose A^c}$ are a basis for the topology on $\Omega X$. Note already 
$(\closedop A^c)_{A\s X}$ are a base for a topology on $\Omega X$. Let $A\s X$ 
be 
a subset and $\sheaff\in \closedop A^c$ be a $\close_f$-ultrafilter. Then 
there exists $B\in \sheaff$ with $B\notclose_f A$. Thus there exists $A'\z 
A,B'\z B$ with $A'\cup B'=X$ and $A'\notclose B'$. This implies $A'\notclose_f 
B$ thus $\sheaff\in \closedop {A'}^c\s\closedop A^c$. Now $A'$ is of the type 
$A'\notclose{A'}^c$.

Suppose $r$ is a close relation on $X$ such that $\partial_r X$ is totally 
disconnected. Then there exists a basis of clopen sets on $\partial_r X$. Let 
$\pi\s \partial_r X$ be a clopen subset. Thus $\pi\bar r \pi^c$. This implies 
there exist $A,B\s X$ with $A\bar r B$ and $\pi\s \closedop A,\pi^c\s \closedop 
B$. In particular $A\notclose B$ and
\f{
\closedop{A\cup B}
&=\closedop A \cup \closedop B\\
&\z \pi \cup \pi^c\\
&=\partial_r X.
}
This implies the inclusion $A\cup B\to X$ is coarsely surjective. Thus 
$A\notclose_f B$ which implies $\pi\notclose_f \pi^c$. Thus the unique map 
$\varepsilon X \to \partial_r X$ extending the identity on $X$ is well-defined 
and continuous. 

Now we show $\Omega$ is a functor. Let $\varphi:X\to Y$ be a coarse map between 
metric spaces. It is sufficient to show that $A\close_f B$ implies 
$\varphi(A)\close_f \varphi(B)$. Suppose $\varphi(A)\notclose_f \varphi(B)$. 
Then there exist $A'\z \varphi(A),B'\z \varphi(B)$ with $A'\cup B'=Y$ and 
$A'\notclose B'$. This implies $\iip \varphi{A'}\z A,\iip \varphi{B'}\z 
B,\iip\varphi{A'}\cup\iip \varphi{B'}=X$ and $\iip \varphi {A'}\notclose \iip 
\varphi{B'}$. Thus $A\notclose_f B$.
\end{proof}

\begin{cor}\name{Protasov}
      Let $X$ be a proper metric space. If $\asdim(X)=0$ then $\nu(X)$ and 
$\Omega(X)$ coincide.
\end{cor}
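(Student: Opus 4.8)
The plan is to reduce the statement to an identity of large-scale proximity relations and then to build an explicit coarse partition. Recall that $\nu(X)=\partial_\close X$ and $\Omega(X)=\partial_{\close_f}X$. From the definition of $\close_f$ one has $A\notclose_f B\Rightarrow A\notclose B$ (if $A'\supseteq A$, $B'\supseteq B$, $A'\cup B'=X$ and $A'\notclose B'$, then $A\notclose B$ by monotonicity of $\close$), so $\close$ is coarser than $\close_f$ and Proposition~\ref{prop:quotientclosefiner} yields the canonical quotient $\nu(X)\to\Omega(X)$. By Remark~\ref{rem:uniquequotient} this map is a homeomorphism as soon as $\close=\close_f$. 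Since $\close\subseteq\close_f$ always holds, it remains to prove that under $\asdim(X)=0$ we have $A\notclose B\Rightarrow A\notclose_f B$; that is, any two coarsely disjoint sets can be enlarged to a genuine coarse partition of $X$.

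Next I would encode the zero-dimensionality as an ultrametric. Assuming (as throughout the paper) that $X$ is $R$-discrete, $\asdim(X)=0$ means precisely that for every $R$ the partition of $X$ into $R$-chain components $\{C_R\}$ is uniformly bounded, say $\mathrm{diam}\,C_R\le D(R)$, while distinct $R$-components are automatically more than $R$ apart. Writing $x\sim_R y$ when $x,y$ lie in the same $R$-component, set $\rho(x,y)=\inf\{R:x\sim_R y\}$. Concatenation of chains shows $\rho$ is an ultrametric, and $\rho\le d$ together with the implication $\rho(x,y)\le R\Rightarrow d(x,y)\le D(R)$ shows that the identity is a coarse equivalence $(X,d)\to(X,\rho)$. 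In particular boundedness and the relation $\close$ agree for $d$ and $\rho$, so $A\notclose B$ also in the ultrametric $\rho$.

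The heart of the argument is the ultrametric Voronoi set $U=\{x\in X:\rho(x,A)\le\rho(x,B)\}$, where $\rho(x,A)=\inf_{a\in A}\rho(x,a)$. Clearly $A\subseteq U$, while $B\setminus A\subseteq U^c$ and $A\cap B$ is bounded; thus taking $A'=U$ and $B'=U^c\cup(A\cap B)$ gives $A\subseteq A'$, $B\subseteq B'$ and $A'\cup B'=X$. The key claim is $U\notclose U^c$. Suppose not: there are unbounded sequences $u_i\in U$, $v_i\in U^c$ with $\rho(u_i,v_i)\le R$. The ultrametric inequality forces, for the four quantities $\rho(u_i,A),\rho(u_i,B),\rho(v_i,A),\rho(v_i,B)$, that any of them exceeding $R$ equals its partner across $u_i\leftrightarrow v_i$. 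A short case analysis then shows that either the defining inequalities $\rho(u_i,A)\le\rho(u_i,B)$ and $\rho(v_i,B)<\rho(v_i,A)$ collapse to a strict inequality of a number with itself, or else $v_i$ lies within $\rho$-distance $R$ of both $A$ and $B$, producing pairs $(a_i,b_i)\in A\times B$ with $\rho(a_i,b_i)\le R$ escaping to infinity and contradicting $A\notclose B$. Hence $U\notclose U^c$, so $A'\notclose B'$ (adjoining the bounded set $A\cap B$ is harmless), which witnesses $A\notclose_f B$.

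The main obstacle is precisely this last claim: a single partition only separates $A$ and $B$ by a fixed distance $R$, and mere $R$-separation does not give $\notclose$. What upgrades bounded separation to genuine coarse disjointness is the strong triangle inequality of $\rho$, which is available exactly because $\asdim(X)=0$; this is where the hypothesis enters in an essential way. Once $\close=\close_f$ is established, the boundaries $\partial_\close X$ and $\partial_{\close_f}X$ are defined by the same relation, so $\nu(X)=\Omega(X)$, as claimed. As an aside, the same conclusion can be packaged through universality: $\asdim(X)=0$ makes $\nu(X)$ totally disconnected, so Theorem~\ref{thm:freudenthaluniversal} supplies a continuous map $\varepsilon X\to hX$ inverse to the canonical quotient.
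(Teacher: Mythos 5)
Your argument is correct, and it takes a genuinely different route from the paper's. The paper imports the theorem $\dim\nu(X)=\asdim(X)$ to conclude that $\nu(X)$ is totally disconnected and then plays the two universal properties against each other: Theorem~\ref{thm:freudenthaluniversal} gives a surjection $\varepsilon X\to hX$, Remark~\ref{rem:uniquequotient} gives one the other way, and uniqueness of continuous surjections extending the identity on $X$ forces both composites to be identities --- this is exactly the ``aside'' you mention in your last sentence. Your main argument instead works entirely at the level of relations: you prove $\close=\close_f$ directly, so that $\partial_{\close}X$ and $\partial_{\close_f}X$ are literally the same construction. The ultrametric $\rho$ built from $R$-chain components is a faithful encoding of $\asdim(X)=0$, and the Voronoi set $U=\{x:\rho(x,A)\le\rho(x,B)\}$ together with the strong triangle inequality does yield $U\notclose U^c$; I checked the case analysis and it closes (the one cosmetic point: in the case $\rho(v_i,A)>R\ge\rho(v_i,B)$ the contradiction is $\rho(u_i,A)=\rho(v_i,A)>R\ge\rho(u_i,B)\ge\rho(u_i,A)$, i.e.\ it lands on the $u_i$ side rather than producing a point near both sets, but your stated dichotomy covers it). What your approach buys is self-containedness: it avoids the nontrivial external input $\dim\nu(X)=\asdim(X)$, uses only the elementary characterization of $\asdim=0$ by uniformly bounded chain components, and identifies the two compactifications on the nose rather than via a pair of quotient maps. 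What the paper's approach buys is brevity and a template that generalizes to any coarse compactification with totally disconnected boundary. If you write yours up, make two steps explicit: the reduction to an $R$-discrete space (justified since $\close$, $\close_f$, $\nu$ and $\Omega$ are coarse invariants and the identity $(X,d)\to(X,\rho)$ is a coarse equivalence), and the passage from $U\notclose U^c$ to $U\notclose U^c\cup(A\cap B)$, which needs that $A\cap B$ is bounded (true because $A\notclose B$) together with the union axiom.
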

\begin{rem}
      Compare this result with \cite[Lemma~4.3]{Protasov2003}. We prove the 
same result using universal properties.
\end{rem}
\begin{proof}
     Since $\asdim(X)=0$ the space $\nu(X)$ is zero dimensional by 
\cite{Dranishnikov1998}, \cite{Dranishnikov2000}. This implies $\nu(X)$ is 
totally disconnected. By Theorem~\ref{thm:freudenthaluniversal} there exists a 
unique surjective map $h(X)\to \varepsilon(X)$ which extends the identity on 
$X$. Now by Remark~\ref{rem:uniquequotient} there exists a 
unique surjective map $h(X)\to \varepsilon(X)$. Since the composition of both 
maps $h(X)\to h(X)$ and $\varepsilon(X)\to \varepsilon(X)$ are unique 
surjective they agree with the identity. This proves the spaces 
$\Omega(X),\nu(X)$ are homeomorphic.  
\end{proof}

\section{Gromov boundary}

The Gromov boundary is the last interesting example in this paper. There is a quotient map from the Higson compactification to the Gromov compactification since it is a coarse compactification. We are going to present in this chapter maps in the other direction.

If $X$ is a metric space and $x_0\in X$ a fixed point then the \emph{Gromov product} of two points $x,y\in X$ is defined as
\[
(x|y):=1/2(d(x,x_0)+d(y,x_0)-d(x,y)).
\]

\begin{defn}\name{Gromov boundary}
 Let $X$ be a proper geodesic hyperbolic metric space.  A continuous function $\varphi:X\to \R$ is called \emph{Gromov} if for every $\varepsilon>0$ there exists $K>0$ such that
\[
      (x|y)>K \,\to\, |\varphi(x)-\varphi(y)|<\varepsilon.
\]
The Gromov functions determine a compactification of $X$ called the \emph{Gromov 
compactification} $gX$. The boundary $\partial X=gX\setminus X$ is called the 
\emph{Gromov boundary}.
\end{defn}

\begin{rem}
\label{rem:gromovboundary}
Let $X$ be a proper hyperbolic geodesic metric space. Two sequences $(a_i)_i,(b_i)_i\s X$ converge to the same point on the Gromov boundary $\partial X$ if and only if 
\[
      \liminf_{i,j\to \infty}(a_i|b_j)=\infty.
\]
If $p\in \partial X$ define
\[
U_1(p,r)=\{q\in \partial(X): [(x_n)_n]=p,[(y_n)_n]=q,\liminf_{i,j\to \infty}(x_i|y_j)\ge r\}
\]
and
\[
U_2(p,r)=\{y\in X: [(x_n)_n]=p,\liminf_{i,j\to \infty}(x_i|y)\ge r\}.
\]
Then $\{U_1(p,r)\cup U_2(p,r):r\ge 0\}$ is a neighborhood basis of $p$ in $gX$.
\end{rem}
\begin{proof}
The first part is \cite[Proposition~4.3]{Fukaya2018}. The second part is \cite[Definition~2.13]{Benakli2002}.
\end{proof}

\begin{ex}\name{Gromov boundary}
Let $A,B\s X$ be subsets of a hyperbolic proper metric space. Define $A\close_g B$ if there are sequences $(a_i)_i\s A,(b_i)_i\s B$ such that 
\[
      \liminf_{i,j\to \infty}(a_i|b_j)=\infty.
\]
 If $A\close B$ then there exist unbounded sequences $(a_i)_i\s A,(b_i)_i\s B$ 
and some $R\ge 0$ such that $d(a_i,b_i)\le R$ for every $i$. This implies 
$\liminf_{i,j\to \infty}(a_i|b_j)=\infty$ thus $A\close_g B$. By \cite[Proposition~9.8]{Grzegrzolka2018a} the relation $\close_g$ is a coarse proximity relation, Thus axioms 1,2,4,5 of Definition~\ref{defn:closerelation} hold. Axiom 3 of Definition~\ref{defn:closerelation} holds trivially, thus $\close_g$ is a large-scale proximity relation.  
\end{ex}

\begin{ex}\name{Gromov boundary}
Let $X$ be a hyperbolic geodesic proper metric space. By Remark~\ref{rem:gromovboundary} we obtain $\partial_{\close_g}=\partial(X)$. 
Here the right side denotes the Gromov boundary of $X$.
\end{ex}

\begin{rem}
\label{rem:qgr}
      If $X$ is a hyperbolic metric space and $\gamma,\delta:\Z_+\to X$ are 
quasigeodesic rays in $X$ then $\gamma(\Z_+)\close \delta(\Z_+)$ implies 
there exists some entourage $E\s X\times X$ with $E[\gamma(\Z_+)]\z 
\delta(\Z_+)$ and $E[\delta(\Z_+)]\z \gamma(\Z_+)$.
\end{rem}
\begin{proof}
      By \cite[Definition~6.16]{Roe2003} a map $\gamma:\Z_+\to X$ is a 
quasigeodesic ray if there are constants $R>0,S\ge 0$ with
\[
      \ii R|i-j|-S\le d(\gamma(i),\gamma(j))\le R|i-j|+S
\]
for every $i,j\in\Z_+$. It follows from \cite[Theorem~6.17]{Roe2003} that there 
exists some $T\ge 0$ such that $d(\gamma(\Z_+),\delta(\Z_+))\le T$.
\end{proof}

\begin{rem}
\label{rem:monohigson}
     Let $X$ be a geodesic metric space and $\tilde\gamma:\R_+\to X$ a 
geodesic ray. If $\gamma:\Z_+\to X$ is close to $\tilde\gamma$ then it is 
coarsely injective coarse and the induced map $\nu(\gamma):\nu(\Z_+)\to \nu(X)$ 
is a closed embedding.
\end{rem}
\begin{proof}
      This is \cite[Lemma~39]{Hartmann2019a}.
\end{proof}

\begin{thm}
\label{thm:closedembedding}
      Let $X$ be a hyperbolic geodesic proper metric space. Then there is      
 a closed embedding $\Phi: \nu (\Z_+)\times \partial X\to \nu (X)$.
\end{thm}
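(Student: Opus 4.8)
The plan is to build $\Phi$ from the concrete description of the Gromov boundary in Remark~\ref{rem:gromovboundary}. Fix a basepoint $x_0\in X$. Each point $p\in\partial X$ is represented by a geodesic ray $\tilde\gamma_p:\R_+\to X$ emanating from $x_0$ (such a ray exists since $X$ is proper geodesic hyperbolic). Passing to integer times gives a quasigeodesic ray $\gamma_p:\Z_+\to X$ close to $\tilde\gamma_p$, so by Remark~\ref{rem:monohigson} each $\gamma_p$ induces a closed embedding $\nu(\gamma_p):\nu(\Z_+)\to\nu(X)$. The idea is that $\Phi(\sheaff,p)=\nu(\gamma_p)(\sheaff)=(\gamma_p)_*\sheaff$ for an ultrafilter $\sheaff$ on $\Z_+$ representing a point of $\nu(\Z_+)$. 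Concretely, I would realize $\nu(\Z_+)$ and $\nu(X)$ via Delone sets and push the corresponding $\close$-ultrafilters forward along $\gamma_p$; well-definedness on $\nu(\Z_+)$ (independence of the choice of ray $\gamma_p$ representing $p$) follows because two rays representing the same boundary point are close, hence by Remark~\ref{rem:qgr} are a bounded Hausdorff distance apart, so their pushforwards agree in $\nu(X)$ by axiom~3 of the large-scale proximity relation $\close$.

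\textbf{Injectivity.}
Injectivity splits into two separate phenomena. First, fixing $p$, the map $\sheaff\mapsto(\gamma_p)_*\sheaff$ is injective precisely because $\nu(\gamma_p)$ is a closed embedding by Remark~\ref{rem:monohigson}. Second, for distinct boundary points $p\neq q$, I must show the images $\nu(\gamma_p)(\nu(\Z_+))$ and $\nu(\gamma_q)(\nu(\Z_+))$ are disjoint in $\nu(X)$. This is the heart of the argument: I would show $\gamma_p(\Z_+)\notclose\gamma_q(\Z_+)$ whenever $p\neq q$. If they were close, then by Remark~\ref{rem:qgr} the two rays would be a bounded Hausdorff distance apart, forcing $\liminf_{i,j}(\gamma_p(i)\mid\gamma_q(j))=\infty$, which by Remark~\ref{rem:gromovboundary} means $p=q$, a contradiction. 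Since $\gamma_p(\Z_+)\notclose\gamma_q(\Z_+)$, the closures of these sets meet $\nu(X)$ disjointly, so no $\close$-ultrafilter lies in both images.

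\textbf{Continuity, closedness, and the product topology.}
For continuity I would use the neighborhood basis from Remark~\ref{rem:gromovboundary}: a neighborhood of $\Phi(\sheaff,p)$ in $\nu(X)$ pulls back to a product of a neighborhood of $\sheaff$ in $\nu(\Z_+)$ (controlled by the filter) and a Gromov-product neighborhood $U_1(p,r)$ of $p$ in $\partial X$, using that the Gromov product $(\gamma_p(i)\mid\gamma_{p'}(j))$ varies continuously as $p'\to p$ in $\partial X$. Since $\nu(\Z_+)\times\partial X$ is compact (both factors are compact: $\partial X$ is metrizable compact, $\nu(\Z_+)$ is compact Hausdorff) and $\nu(X)$ is Hausdorff, a continuous injection is automatically a closed embedding, so closedness comes for free once continuity and injectivity are established. \textbf{The main obstacle} I expect is continuity at the level of the two variables jointly: the pushforward construction $(\sheaff,p)\mapsto(\gamma_p)_*\sheaff$ must be shown continuous as a function of the \emph{pair}, which requires controlling how the ray $\gamma_p$ moves with $p$ and verifying that nearby rays produce nearby ultrafilters in $\nu(X)$ uniformly enough; marrying the ultrafilter topology on $\nu(\Z_+)$ with the Gromov-product topology on $\partial X$ is the delicate step, and I would isolate it as a lemma comparing $(\gamma_p(i)\mid\gamma_{p'}(j))$ with the Gromov products defining the basis sets $U_1,U_2$.
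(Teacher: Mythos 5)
Your construction of $\Phi$, the well-definedness argument via Remarks~\ref{rem:qgr} and~\ref{rem:monohigson}, and the two-part injectivity argument (injectivity on each fiber from Remark~\ref{rem:monohigson}, disjointness of the images of distinct rays from Remark~\ref{rem:qgr}) all match the paper's proof. The gap is in the topological step, and it is not merely that you defer the ``delicate lemma'': the lemma you would need is false, because $\Phi$ is \emph{not} jointly continuous as a map into $\nu(X)$, so the route ``continuous injection from a compact space into a Hausdorff space is a closed embedding'' cannot be completed. Concretely, let $T$ be the binary tree, $\gamma_p$ the leftmost ray, and $p_n$ the boundary point whose ray $\gamma_{p_n}$ agrees with $\gamma_p$ up to depth $n$ and then branches off, so $p_n\to p$ in $\partial T$. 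Put $A=\bigcup_n\gamma_{p_n}(\{m\ge 2n\})$. A point $\gamma_{p_n}(m)$ with $m\ge 2n$ has distance $m-n\ge n$ from $\gamma_p(\Z_+)$, so for each fixed $R$ only finitely many points of $A$ lie within distance $R$ of $\gamma_p(\Z_+)$; hence $A\notclose\gamma_p(\Z_+)$ and $\Phi(\sheaff,p)\notin\closedop A$ for any nonprincipal ultrafilter $\sheaff$ on $\Z_+$. On the other hand $\gamma_{p_n}^{-1}(A)\z\{m\ge 2n\}$ is cofinite, so $A\in(\gamma_{p_n})_*\sheaff$ and $\Phi(\sheaff,p_n)\in\closedop A$ for every $n$. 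Since $\closedop A$ is closed in $\nu(T)$, the sequence $\Phi(\sheaff,p_n)$ does not converge to $\Phi(\sheaff,p)$ even though $(\sheaff,p_n)\to(\sheaff,p)$; equivalently, the neighborhood $\closedop A^c$ of $\Phi(\sheaff,p)$ pulls back to a set containing no product of a neighborhood of $\sheaff$ with any $U_1(p,r)$, contrary to your continuity sketch.

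The paper runs the compactness argument in the opposite direction: it shows $\Phi$ is an \emph{open} map onto its image (equivalently, that $\Phi^{-1}$ is continuous there), using that $p_2\circ\Phi^{-1}$ is the restriction of the quotient map $\nu(X)\to\partial_{\close_g}X=\partial X$ of Proposition~\ref{prop:quotientclosefiner} and that $\Phi(\partial X\times V)=\bigcup_\gamma\gamma_*V$, and then applies ``continuous bijection from a compact space to a Hausdorff space'' to $\Phi^{-1}$ rather than to $\Phi$. To repair your write-up you would have to switch to proving continuity of the inverse (and then justify compactness, or closedness, of the image), not continuity of $\Phi$ itself.
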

\begin{rem}
      Compare this result with \cite[Theorem~10.1.2]{Buyalo2007} which states 
that for every proper geodesic hyperbolic metric space the inequality
\[
      \asdim(X)\ge\dim \partial(X)+1
\]
holds. Note $\asdim(\Z_+)=1$ and $\asdim(X)=\dim(\nu(X))$ for every proper 
metric space \cite{Dranishnikov1998},\cite{Dranishnikov2000}. By 
\cite[Theorem~3]{Morita1977} we obtain $\dim(\nu(\Z_+)\times \partial X)=\dim 
(\partial X)+1$. Thus we obtain a new proof for the above inequality.
\end{rem}

\begin{proof}
     Let $S\s X$ be an $R$-discrete for some $R>0$ subspace such that the 
inclusion $S\to X$ is coarsely surjective. Let $p\in \partial(X)$ be a point. 
Then $p$ is represented by an isometry $\tilde\gamma:\Z_+\to X$. Now choose 
$\gamma:\Z_+\to S$ close to $\tilde \gamma$. Then $\gamma$ is a quasigeodesic 
ray in $S$. If $\sheaff$ is a nonprincipal 
ultrafilter on $\Z_+$ then $\gamma_*\sheaff$ is a nonprincipal ultrafilter on 
$S$. Define 
 \f{
     \Phi: \nu (\Z_+)\times \partial X&\to \nu (X)\\
      ([\sheaff],p)&\mapsto [\gamma_*\sheaff]
      }
Since $\gamma$ is a coarse map $\sheaff\lambda_\close \sheafg$ 
implies $\gamma_*\sheaff\lambda_\close \gamma_*\sheafg$ for every 
$\sheaff,\sheafg\in \nu(\Z_+)$. If $\gamma,\delta:\Z_+\to S$ represent the same 
point in $\partial X$ then there exists $K\ge 0$ such that 
$d(\gamma(n),\delta(n))\le K$. This implies 
$\gamma_*\sheaff\lambda_\close\delta_*\sheaff$ for every $\sheaff\in 
\nu(\Z_+)$. Thus $\Phi$ is a well defined map.

Now we show $\Phi$ is injective: Let $\sheaff,\sheafg\in 
\nu(\Z_+),\gamma,\delta\in\partial X$ be elements with 
$\gamma_*\sheaff\lambda_\close\delta_*\sheafg$. Then $\gamma(\Z_+)\close 
\delta(\Z_+)$. Then Remark~\ref{rem:qgr} implies that $\gamma,\delta$ represent 
the same element. Without loss of generality assume that $\gamma=\delta$. Then 
$\gamma_*\sheaff \lambda_\close \gamma_*\sheafg$ implies $\sheaff\lambda_\close 
\sheafg $ by Remark~\ref{rem:monohigson}.

Now we show $\Phi$ is open: Denote by $p_2:\nu(\Z_+)\times \partial(X)\to 
\partial (X)$ the projection to the second factor. Then $p_2\circ\ii \Phi$   
equals the quotient map $q_X$ of Theorem~\ref{prop:quotientclosefiner} 
restricted to the image of $\Phi$. Let $U\s \partial X$ be open then
\f{
\Phi(U\times \nu(\Z_+))
&=(p_2\circ \ii \Phi)^{-1}(U)\\
&=\iip {q_X} U
}
is open. If $V\s \nu(\Z_+)$ is open then $\Phi(\partial X\times 
V)=\bigcup_\gamma \gamma_*V$ is a union of open sets and thus open.

Thus we have shown $\ii\Phi:\Phi(\nu(\Z_+)\times \partial(X))\to 
\nu(\Z_+)\times \partial X$ is bijective and continuous. Since 
$\Phi(\nu(\Z_+)\times \partial(X))$ is compact and $\nu(\Z_+)\times 
\partial(X)$ 
Hausdorff the map $\Phi$ is a homeomorphism onto its image.
\end{proof}

\begin{prop}
\label{prop:treeretract}
      If $T$ is a tree then the space $\Phi(\partial(T)\times \nu(\R_+))$ is a 
retract of $\nu(T)$.
\end{prop}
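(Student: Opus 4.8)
The plan is to realise the retraction by reading off two canonical coordinates of an arbitrary point of $\nu(T)$ and feeding them back into $\Phi$. Fix the basepoint $o=x_0$ and recall that $\nu(\Z_+)=\nu(\R_+)$ since the inclusion $\iota:\Z_+\to\R_+$ is a coarse equivalence; write $p_1,p_2$ for the projections of $\nu(\R_+)\times\partial T$ onto its two factors. The \emph{directional} coordinate is the quotient map $q_X:\nu(T)\to\partial T$: since $A\close B$ implies $A\close_g B$, the relation $\close$ is finer than $\close_g$, and $\partial_{\close_g}T=\partial T$ by the Gromov example, so Proposition~\ref{prop:quotientclosefiner} gives a continuous surjection $q_X:\partial_\close T=\nu(T)\to\partial_{\close_g}T=\partial T$ extending the identity on $X$. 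The \emph{radial} coordinate is $\nu(\beta)$ for $\beta:T\to\R_+$, $\beta(x)=d(o,x)$: this map is $1$-Lipschitz, hence coarsely uniform, and coarsely proper, hence coarse, so by functoriality of the Higson corona \cite{Hartmann2019c} it induces a continuous map $\nu(\beta):\nu(T)\to\nu(\R_+)$. I then set $r=(\nu(\beta),q_X):\nu(T)\to\nu(\R_+)\times\partial T$ and propose $\Phi\circ r$ as the retraction onto $\Phi(\partial T\times\nu(\R_+))$.

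The whole argument reduces to the two identities $q_X\circ\Phi=p_2$ and $\nu(\beta)\circ\Phi=p_1$, for then $r\circ\Phi=(\nu(\beta)\circ\Phi,\,q_X\circ\Phi)=(p_1,p_2)=id$, whence $\Phi\circ r$ fixes $\im\Phi$ pointwise. The first identity is already contained in the proof of Theorem~\ref{thm:closedembedding}, where it is shown that $p_2\circ\Phi^{-1}=q_X|_{\im\Phi}$. For the second identity I use the notation of that proof: a boundary point $p$ is represented by a geodesic ray $\tilde\gamma:\Z_+\to T$ with $\tilde\gamma(0)=o$, and $\gamma:\Z_+\to S$ is a quasigeodesic close to $\tilde\gamma$. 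Since $d(o,\tilde\gamma(n))=n$ and $d(\gamma(n),\tilde\gamma(n))$ is bounded, the composite $\beta\circ\gamma:\Z_+\to\R_+$ is close to $\iota$; as close coarse maps induce the same map on coronas,
\[
\nu(\beta)(\Phi([\sheaff],p))=\nu(\beta)([\gamma_*\sheaff])=\nu(\beta\circ\gamma)([\sheaff])=\nu(\iota)([\sheaff])=[\sheaff],
\]
which is exactly $p_1([\sheaff],p)$ under the identification $\nu(\Z_+)=\nu(\R_+)$.

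Finally $\Phi\circ r$ is continuous, being the composite of the continuous map $r$ (its two components $\nu(\beta)$ and $q_X$ are continuous) with the embedding $\Phi$; its image lies in $\im\Phi=\Phi(\partial T\times\nu(\R_+))$ by construction; and by the previous paragraph it restricts to the identity there. Hence $\Phi\circ r$ is the desired retraction, and no coarse self-map of $T$ needs to be constructed.

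I expect the one genuine point to be the identity $\nu(\beta)\circ\Phi=p_1$, that is, that the distance-to-basepoint function recovers the $\nu(\R_+)$-coordinate that $\Phi$ inserts. Its verification rests on the fact that each boundary point is represented by an arclength-parametrised geodesic ray from $o$, so that the chosen quasigeodesic satisfies $d(o,\gamma(n))=n+O(1)$; the tree hypothesis makes this monotonicity transparent, as a geodesic issuing from the root of $T$ leaves every ball about $o$ and never returns. Everything else is a formal assembly of the maps $q_X$, $\nu(\beta)$ and $\Phi$ furnished by the earlier results.
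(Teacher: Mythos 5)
Your proof is correct modulo the paper's earlier results, and it takes a genuinely different route. The paper builds the retraction geometrically: for ultrafilters containing a set $A$ with $|\partial(A)|=1$ it pushes forward along the nearest-point projection $\pi:T\to\gamma(\R_+)$ onto the corresponding ray, shows this is uniformly continuous on the dense subset $\varpi\s\nu(T)$ of such classes, and then invokes an extension theorem to get a map on all of $\nu(T)$. You instead assemble two globally defined continuous ``coordinates'' --- the radial one $\nu(\beta)$ coming from functoriality of the corona applied to the coarse map $x\mapsto d(o,x)$, and the angular one $q_X:\nu(T)\to\partial_{\close_g}(T)=\partial T$ coming from Proposition~\ref{prop:quotientclosefiner} --- and reduce everything to $q_X\circ\Phi=p_2$ (which the paper itself records in the proof of Theorem~\ref{thm:closedembedding}) and $\nu(\beta)\circ\Phi=p_1$ (which follows because $\beta\circ\gamma$ is close to the inclusion $\Z_+\to\R_+$, and close coarse maps induce equal maps on coronas). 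This buys you a shorter argument with no density/extension step and no projection maps, and it produces an explicit formula $\Phi\circ(\nu(\beta),q_X)$ for the retraction. One point you should make explicit rather than leave as an aside: your argument nowhere uses that $T$ is a tree --- every ingredient (Theorem~\ref{thm:closedembedding}, Proposition~\ref{prop:quotientclosefiner}, coarseness of $\beta$, and $d(o,\gamma(n))=n+O(1)$ for a unit-speed ray) is available for an arbitrary hyperbolic geodesic proper space, so you are in fact proving a strictly stronger statement than Proposition~\ref{prop:treeretract}. That is a feature, not a bug, but it deserves to be stated and double-checked rather than attributed vaguely to ``the tree hypothesis making monotonicity transparent''; by contrast, the paper's proof genuinely uses the tree structure (the CAT(0) projection and the fact that $\pi$ does not increase distances). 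The resulting retractions also differ pointwise in general --- yours records the ultrafilter of distances $\beta_*\sigma$, the paper's records the ultrafilter of projections $\pi_*\sigma$ --- which is harmless since retractions are not unique.
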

\begin{proof}
We first show that $\Phi(\partial(T)\times \nu(\R_+))$ is a retract of 
$\varpi:=\bigcup_{A\s T,|\partial (A)|=1}\closedop A\s \nu(T)$:

      If $\sheaff\in \closedop A$ with $|\partial (A)|=1$ then there is a 
geodesic ray $\gamma$ on $T$ such that $\sheaff$ converges to the point 
represented by $\gamma$ in the Gromov compactification.  Since $T$ is CAT(0) 
and $\gamma(\R_+)$ is convex and complete 
\cite[Proposition~II.2.4]{Bridson1999} provides us with a projection map 
$\pi:T\to \gamma(\R_+)$ such that $d(x,\pi(x))=d(x,\gamma(\R_+))$ for every 
$x\in T$.

Then $\{\pi(A):A\in \sheaff,\partial(A)=[\gamma]\}$ define a base for a 
$\close$-ultrafilter $\sheaff_1$ on $T$:
\begin{enumerate}
      \item If $A,B\in \sheaff$ then $A\close B$. Thus there exist unbounded 
sequences $(a_i)_i\s A,(b_i)_i\s B$ with $d(a_i,b_i)\le R$. Then $\pi(a_i)_i\s 
\pi(A),\pi(b_i)_i\s \pi(B)$. By \cite[Proposition~II.2.4.4)]{Bridson1999} the 
map $\pi$ does not increase distances. Thus $d(\pi(a_i),\pi(b_i))\le R$ for 
every $i$. Since $\liminf_{i,j\to\infty}(a_i|\gamma(j))=\infty$ the sequence 
$(\pi(a_i))_i$ is not bounded. This way we have shown that $\pi(A)\close \pi(B)$. 
\item If $A\cup B\in \sheaff_1$ then $A\cup B=\pi(C)$ for some $C\in \sheaff$. 
Define $A'=\{a\in C:\pi(a)\in A\}$ and $B'=\{b\in C:\pi(b)\in B\}$. Then 
$\pi(A')=A,\pi(B')=B$ and $A'\cup B'=C$. Now $A'\in \sheaff$ or $B'\in 
\sheaff$ which implies $A\in \sheaff_1$ or $B\in\sheaff_1$.
\end{enumerate}
Now define a map 
\f{
r:\varpi&\to \nu(T)\\
\sheaff&\mapsto \sheaff_1.
}
Let $\sheaff,\sheafg\in \nu(T)$ be two elements with $\sheaff\lambda_\close 
\sheafg$. If $A\in \sheaff_1,B\in\sheafg_1$ then there are 
$A'\in\sheaff,B'\in\sheafg$ with $\pi(A')=A,\pi(B')=B$. Now $A'\close B'$ 
implies $\pi(A')\close \pi(B')$ as above. This implies 
$\sheaff_1\lambda_\close\sheafg_1$.

Now we show $r$ is continuous on $\varpi$: Suppose $A,B\s T$ are subsets with $(\closedop A\cap\varpi)\cap (\closedop B \cap \varpi)\not=\emptyset$. Then
\f{
r(\closedop A\cap \varpi)\cap r(\closedop B\cap \varpi)
&=(\bigcup_{A'\s A,|\partial(A)|=1}\closedop{\pi(A')})\cap (\bigcup_{A'\s A,|\partial(A)|=1}\closedop{\pi(A')})\\
&=\bigcup_{A'\s A,|\partial(A')|=1,B'\s B,|\partial(B')|=1} (\closedop {A'}\cap\closedop{B'})\\
&{\not=}\emptyset.
}
To see the last inequality choose $A':=(a_i)_i\s A,B':=(b_i)_i\s B$ unbounded with $d(a_i,b_i)\le R$ for every $i$ and some $R\ge 0$. If necessary we can choose a subsequence of $(a_i)_i$ such that $(a_i)_i$ converges to a point on the Gromov boundary. Then $(b_i)_i$ converges to the same point. Thus we can assume $|\partial(A')|=1=|\partial(B')|$. Then $\pi(A')\close \pi(B')$, in fact both sets are finite Hausdorff distance apart. Thus $\closedop{\pi(A')}\cap \closedop{\pi(B')}=\not\emptyset$. We just showed $r$ is uniformly continuous with regard to the unique uniformity on the compact space $\nu(T)$.

Note that $\bigcup_{A\s T,|\partial(A)|=1}\closedop A$ is dense in $\nu(T)$: 
Consider the closure of $\bigcup_{A\s T,|\partial(A)|=1}\closedop A$. If $B\s 
T$ is a subset then there exists a sequence $(b_i)_i\s B$ such that $(b_i)_i$ 
converges to a point $\gamma\in\partial(X)$ in the Gromov compactification. 
This means $|\partial((b_i)_i)|=1$. Thus $\overline{\bigcup_{A\s 
T,|\partial(A)|=1}\closedop A}=\nu(T)$.

Then \cite[Theorem~8.3.10]{Engelking1989} implies the retract map $r$ can be 
extended to $\nu(T)$.
\end{proof}

\begin{rem}
The results in Theorem~\ref{thm:closedembedding} and Proposition~\ref{prop:treeretract} are functorial: If $\alpha:T\to S$ is a coarse map between trees such that $\alpha\circ \gamma$ is coarsely injective if $\gamma:\R_+\to X$ is coarsely injective coarse then there is a continuous map
\f{
r\circ \nu(\alpha)\circ\Phi:\partial (T)\times \nu(\R_+)&\to \partial (S)\times\nu(\R_+)\\
([\gamma],[\sheaff])&\mapsto ([\alpha\circ \gamma],[\sheaff]).
}
If $\alpha$ is a coarse equivalence then $\partial(\alpha):\partial(T)\to \partial(S)$ is a homeomorphism since the Gromov boundary is a functor on coarse equivalences. This implies $r\circ \nu(\alpha)\circ \Phi$ is an isomorphism in the topological category. 
\end{rem}

\section{Remarks}

\bibliographystyle{halpha-abbrv}
\bibliography{mybib}

\address

\end{document}